\documentclass[10pt]{article}
\usepackage[top=3.5cm, bottom=3cm, left=3cm, right=2.5cm]{geometry} 

\usepackage{amssymb,amsmath,amsfonts, amsthm}
\numberwithin{equation}{section}
\usepackage{mathtools}

\usepackage{authblk}
\usepackage{hyperref}
\usepackage{cleveref}

\usepackage[shortlabels]{enumitem} 
\usepackage{graphicx}

\usepackage{mathrsfs}
\usepackage{bm}

\usepackage[backend=biber]{biblatex}
\theoremstyle{plain}
\newtheorem{theorem}{Theorem}[section]
\theoremstyle{plain}
\newtheorem{lemma}[theorem]{Lemma}
\newtheorem{proposition}[theorem]{Proposition}
\newtheorem{corollary}[theorem]{Corollary}
\theoremstyle{definition}
\newtheorem{definition}[theorem]{Definition}
\newtheorem{remark}[theorem]{Remark}
\newtheorem{assumption}[theorem]{Assumption}

\AddToHook{env/lemma/begin}{\crefalias{theorem}{lemma}}
\AddToHook{env/proposition/begin}{\crefalias{theorem}{proposition}}
\AddToHook{env/corollary/begin}{\crefalias{theorem}{corollary}}
\AddToHook{env/definition/begin}{\crefalias{theorem}{definition}}
\AddToHook{env/remark/begin}{\crefalias{theorem}{remark}}
\AddToHook{env/assumption/begin}{\crefalias{theorem}{assumption}}
\AddToHook{env/algo/begin}{\crefalias{theorem}{algo}}
\AddToHook{env/ex/begin}{\crefalias{theorem}{ex}}
\crefname{assumption}{assumption}{assumptions}
\Crefname{assumption}{Assumption}{Assumptions}

\newcounter{ass}

\newcommand{\eps}{\varepsilon}

\newcommand{\cD}{\mathcal{D}}

\newcommand{\cF}{\mathcal{F}}
\newcommand{\cG}{\mathcal{G}}

\newcommand{\cI}{\mathcal{I}}

\newcommand{\cO}{\mathcal{O}}

\newcommand{\cS}{\mathcal{S}}

\newcommand{\bC}{\mathbb{C}}

\newcommand{\bN}{\mathbb{N}}

\newcommand{\bR}{\mathbb{R}}

\newcommand{\bZ}{\mathbb{Z}}

\newcommand{\mfv}{\mathfrak{v}}

\newcommand{\mfu}{\mathfrak{u}}
\newcommand{\bgg}{\bm{\gamma}}
\newcommand{\mfs}{\mathfrak{s}}

\newcommand{\floor}[1]{\left\lfloor #1 \right\rfloor}
\newcommand{\ceil}[1]{\left\lceil #1 \right\rceil}
\newcommand\dif{\mathop{}\!\mathrm{d}}

\DeclarePairedDelimiterX{\inp}[2]{\langle}{\rangle}{#1, #2}
\DeclarePairedDelimiter\abs{\lvert}{\rvert}
\DeclarePairedDelimiter\norm{\lVert}{\rVert}

\DeclareMathOperator*{\supp}{supp}

\crefname{ass}{assumption}{assumptions}
\Crefname{ass}{Assumption}{Assumptions}

\usepackage{xcolor}

\title{
    Higher-Order QMC $hp$-FEM UQ for Singular Perturbation:\\
    Reaction--Diffusion with Gevrey-Parametric Reaction Coefficient
}
\date{September 7, 2026}

\author[$\dagger$]{F. Rohner}
\author[$\dagger$]{Ch. Schwab} 

\affil[$\dagger$]{
    \footnotesize Seminar for Applied Mathematics, ETH Z\"{u}rich\\
    R\"{a}mistrasse 101, 8092 Z\"urich, Switzerland\\
    \texttt{\char`\{fabian.rohner,christoph.schwab\char`\}@sam.math.ethz.ch}
}

\begin{document}
\maketitle

{\small
\begin{abstract}
We analyze the combined discretization of singularly perturbed
reaction--diffusion problems with perturbation parameter $\eps\in (0,1]$
and with parametric, Gevrey-regular reaction term,
by $hp$-Galerkin FE methods and by higher-order polynomial lattice rules. 
We establish $\eps$-robust algebraic convergence rates 
for the combined QMC $hp$-FEM approximation
that are not subject to the curse of dimensionality (CoD) in the parametric reaction coefficient,
and only limited by the sparsity and the $\mfs_{y}$-Gevrey regularity 
of the reaction coefficient expansion.
On the interval $(-1,1)$, we establish an $\eps$-robust,
root-exponential convergence rate $\exp(-\sigma \, p_{\mathrm{FE}}^{1/\mfs_{x}})$ 
for $hp$-Galerkin FEM for reaction--diffusion problems under $\mfs_{x}$-Gevrey
regularity assumptions on the reaction coefficient and the source term, 
a result that can be of independent interest.
Numerical experiments are consistent with the theoretical bounds.
\end{abstract}
}
\textbf{Key Words:}
singular perturbation, Gevrey regularity, Quasi--Monte Carlo integration,
reaction--diffusion equations, uncertainty quantification\\
\textbf{Subject Classification:}
65N30, 65D30, 65D32

\section{Introduction}
\label{sec:Intro}
The efficient numerical approximation of mathematical expectations of (functionals of)
solutions of PDEs with parametric or stochastic input data (e.g. loading, coefficients, domains)
is a core area of numerical uncertainty quantification (UQ) 
that has attracted considerable attention in recent years. 
The numerical analysis of efficient solution methods has 
generally focused on combining convergence rate bounds for classical 
discretization schemes for the PDE under consideration 
with error bounds for sampling methods, 
such as Monte Carlo (MC), Quasi--Monte Carlo (QMC) or sparse grid (Sp G) 
resp. ``stochastic collocation'' methods. 
In most, if not all, references, it was assumed that the
PDE problem of interest is \emph{regular}, 
i.e. that there holds unique solvability and 
that error bounds on the numerical PDE solution 
hold uniformly over the set of admissible PDE input data.
Often, in applications, 
PDEs are \emph{singularly perturbed}. 
I.e. they depend, in addition to the uncertain input data, also on one or several small 
\emph{physical, non-dimensional} parameters.
Then, 
the interest lies in \emph{robust numerical approximation of solutions in the small parameter regime}. 
Prototypical examples of this scenario are 
elliptic reaction--diffusion equations with small diffusion coefficients.
It has been well-known since the fundamental works \cite{Bakhvalov1969,Shishkin1989} 
that \emph{robust} PDE discretizations minimally require 
\emph{local mesh refinement for the robust solution approximation}, 
see e.g. \cite{RST2nd} and the references there.
The numerical analysis of higher order UQ methods in the presence of singular perturbations is 
the purpose of the present paper.
\subsection{Previous Results}
\label{sec:PrRes}
The use of Higher-Order Quasi--Monte Carlo (HoQMC) integration for the efficient
approximation of mathematical expectations of solutions of PDEs with 
parametric inputs has received attention in the past decade
\cite{dick2014higher,dick2016holomorphic,dick2016multilevel,kuo2016survey}.

Generally, in computational PDE UQ
the use of HoQMC quadratures is preceded by a so-called
``dimension-truncation'' to PDE inputs that depend on 
a finite number of parameters. 
We refer to \cite{guth2024lognormal} and the references there.

In \cite{RS26_1180}, which is closely related to the present work, 
for the singularly perturbed reaction--diffusion problem considered
also in the present work,
we analyzed the use of a High-Order QMC quadrature combined
with a standard, first-order Lagrange finite element discretization on
boundary-layer resolving Shishkin grids.
In \cite{RS26_1180}, we assumed 
\emph{affine-parametric coefficients with limited (low) Sobolev regularity}. 
As shown in \cite{RS26_1180}, the parametric solutions then depend analytically
on the parameters in the reaction coefficient, \emph{with uniform
    w.r. to the singular perturbation parameter sizes of the domains of analyticity}. 
This, in turn, allowed us to show in \cite{RS26_1180}
robust convergence of 
first order for the FE discretization and 
convergence rates for the HoQMC quadratures which are 
free from the so-called ``curse of dimensionality'' (CoD)
depending only on the summability of 
the affine-parametric expansion of the reaction coefficient.

In \cite{chernov2025analytic}, parametric regularity under Gevrey-assumptions
on parameters was investigated for regular linear and semilinear elliptic PDEs.
Convergence rates for Quasi--Monte Carlo quadratures 
for numerical estimation of statistical moments were analyzed.

In \cite{ScSu96,melenk1997boundarylayer}, 
so-called $hp$-Finite Element methods for singular perturbation problems were analyzed. 
\emph{Robust exponential convergence rates} were shown under analyticity
assumptions on the data (source term, reaction coefficient).
\subsection{Contributions}
\label{sec:Contr}
While in \cite{ManuskriptDLBL} and \cite{RS26_1180},
we considered low-order FE discretization in 
the physical domain $D = (-1,1)$ with particular attention to low regularity
data and solutions, we address here a \emph{Gevrey-regular reaction coefficient}
which is parametric on a possibly high-dimensional parameter space.
Accordingly, we combine high-order Quasi--Monte Carlo discretization in the parameter
domain with boundary layer $hp$-FE Galerkin discretizations in the physical domain.
A novel approach via a $\cG^{\mfs_{y}}$-convolution identity is introduced to ascertain 
the $\eps$-explicit $\cG^{\mfs_{y}}$-regular dependence of the solution on the parameters.
A necessary technical ingredient in the $hp$-FE error analysis of independent interest
is the generalization
of the robust exponential convergence rate of $hp$-FEM 
which was proved in \cite{ScSu96,melenk1997boundarylayer} to the more general setting
of $\cG^{\mfs_{x}}$-regular reaction coefficients. In brief, in the present paper we show
1. robust, root-exponential convergence rates of $hp$-FEM for singularly perturbed, 
elliptic reaction--diffusion problems under Gevrey-$\mfs_{x}$ regularity, 
and 2. dimension-independent convergence of order $N^{-1/p}$ for higher-order QMC
approximation of expectations of quantities of interest (QoIs), 
given by bounded linear functionals of the parametric solution, 
provided that the anisotropy weights satisfy
$\bm{\beta} \in \ell^p(\bN)$, uniformly with respect to
$\eps$.
If $\bm{\beta}\in\bigcap_{p>0}\ell^p(\bN)$, as for weights with Gevrey-type decay, 
this yields super-algebraic convergence in the sense that any prescribed
algebraic order can be achieved by a suitable higher-order QMC rule; the
construction and its constants may depend on the selected order.
For certain affine Fourier/KL-type parametrizations, 
such as the model considered in the numerical experiments, 
this weight decay encodes the spatial Gevrey regularity of the coefficient.
\subsection{Layout}
The structure of this text is as follows.
Upon fixing some notation in Sec.~\ref{sec:Notat}, in 
Sec.~\ref{sec:param-reg} we present results on $\eps$-explicit
$\cG{\mfs_{y}}$ parametric regularity of solutions, 
generalizing \cite{RS26_1180} to a $\cG^{\mfs_{y}}$-setting.
Sec.~\ref{sec:dim-trunc} addresses the ``dimension-truncation'' 
error from an infinite number of parameters to a finite number,
again in an $\eps$-explicit manner.
Sec.~\ref{sec:qmc} reviews (known) results on higher-order Quasi--Monte
Carlo quadrature, in the specific context of $\eps$-dependent
weighted norm bounds on the solution.
Sec.~\ref{sec:combined-error} then presents an $\eps$-explicit error bound
of the combined $hp$-FE-HoQMC discretization of the parametric singularly
perturbed problem.
Sec.~\ref{sec:numerical-experiments} 
contains numerical experiments that support the theoretical results.
Appendix~\ref{sec:ExpCnv} contains an extension of the 
exponential convergence analysis of \cite{melenk1997boundarylayer} 
to the presently considered $\cG^{\mfs_{x}}$ setting.
\subsection{Notation}
\label{sec:Notat}
With $\bN = \{1,2,3,...\}$ we denote the natural numbers, and
write $\bN_0 := \{ 0,1,2,...\}$ and $\bN^* := \bN \cup \{ \infty \}$.
Furthermore, $\bR$ and $\bC$ denote the sets of real and complex numbers, respectively.
For $J \in \bN^*$, we write $\bN^J, \bR^J, \bC^J$ for the corresponding $J$-fold Cartesian products.
For $J=\infty$ this is understood as the countably infinite Cartesian product.
Let $\cF$ denote the countable set of finitely supported multi-indices 
$\bm\nu=(\nu_j)_{j\geq1}\in\bN_0^{\bN}$. 
For $\bm\nu\in\cF$, the following quantities are finite:
\begin{equation*}
    \supp(\bm\nu)\coloneqq\{j\in\bN:\nu_j\neq0\},
    \qquad
    \abs{\bm\nu}\coloneqq\sum_{j\geq1}\nu_j,
    \qquad
    \bm\nu!\coloneqq\prod_{j\in\supp(\bm\nu)}\nu_j!.
\end{equation*}
For $\bm\mu,\bm\nu\in\cF$, we write $\bm\mu\leq\bm\nu$ if
$\mu_j\leq\nu_j$ for every $j\geq1$, and define
\begin{equation*}
    \binom{\bm\nu}{\bm\mu}
    \coloneqq\prod_{j\in\supp(\bm\nu)\cup\supp(\bm\mu)}
    \binom{\nu_j}{\mu_j},
    \qquad
    \bm\beta^{\bm\nu}\coloneqq
    \prod_{j\in\supp(\bm\nu)}\beta_j^{\nu_j},
    \qquad \bm\beta^{\bm0}\coloneqq1,
\end{equation*}
for any nonnegative sequence $\bm\beta=(\beta_j)_{j\geq1}$. Thus all products
are finite, including when some $\beta_j=0$.

Let $X$ be a Banach space, let $\mathcal O\subset\bR^m$ be a bounded domain,
and let $\mfs\geq1$.  We write
$g\in\cG^{\mfs}(\overline{\mathcal O};X)$ if $g$ admits a $C^\infty$,
$X$-valued extension to a neighborhood of $\overline{\mathcal O}$ and there
exist constants $C_g,\rho_g \ge 0$ such that
\begin{equation}
    \label{eq:gevrey-class-finite-dimensional}
    \sup_{\bm z\in\overline{\mathcal O}}
    \norm{\partial_{\bm z}^{\bm\nu}g(\bm z)}_X
    \leq C_g\rho_g^{\abs{\bm\nu}}
    (\abs{\bm\nu}!)^{\mfs}
    \qquad\forall\bm\nu\in\bN_0^m.
\end{equation}
For scalar-valued functions, we simply write
$\cG^{\mfs}(\overline{\mathcal O})$.

Let $U=[-1/2,1/2]^{\bN}$, endowed with the product topology.  For a
nonnegative sequence $\bm\beta=(\beta_j)_{j\geq1}$, we write
$g\in\cG_{\bm\beta}^{\mfs}(U;X)$ if $g:U\to X$ is continuous, every
finite-coordinate section admits a jointly $C^\infty$, $X$-valued extension
to a neighborhood of the corresponding finite-dimensional parameter cube,
and the resulting mixed derivatives are continuous and satisfy
\begin{equation}
    \label{eq:gevrey-class-parametric}
    \sup_{\bm y\in U}
    \norm{\partial_{\bm y}^{\bm\nu}g(\bm y)}_X
    \leq C_g\rho_g^{\abs{\bm\nu}}
    (\abs{\bm\nu}!)^{\mfs}\bm\beta^{\bm\nu}
    \qquad\forall\bm\nu\in\cF
\end{equation}
for some constants $C_g,\rho_g \ge 0$.  Thus
\eqref{eq:gevrey-class-finite-dimensional} describes spatial Gevrey
regularity, whereas the weights in \eqref{eq:gevrey-class-parametric} encode
anisotropy of the parameter dependence.

\section{\texorpdfstring{$\eps$}{ε}-Robust parametric Gevrey regularity}
\label{sec:param-reg}

We establish well-posedness and mixed-parametric regularity for 
{solutions of a model linear singularly perturbed reaction-diffusion problem 
    with reaction coefficients that have general $\mfs_{y}$-Gevrey dependence on the parameter. 
}
The central point is that all stability and regularity constants are explicit in
the perturbation parameter $\eps\in(0,1]$ when the solution is measured in
the weighted norm $\norm{\cdot}_{1,\eps,D}$.

\subsection{Model problem and assumptions}
\label{subsec:model-assumptions}

Let $D\subset\bR^d$ be a bounded Lipschitz domain.  With $U = [-1/2,1/2]^{\bN}$ as in
\cref{sec:Notat}, for $\eps\in(0,1]$ and $\bm y\in U$, we consider
\begin{align}
    -\eps^2\Delta_x u_\eps(x;\bm y)+b(x;\bm y)u_\eps(x;\bm y)
    &=f(x), &&x\in D,
    \label{eq:model-pde}\\
    u_\eps(x;\bm y)&=0, &&x\in\partial D,
    \label{eq:model-bc}
\end{align}
where $f\in L^2(D)$ is independent of $\bm y$.
We shall work under the following assumptions.

\begin{assumption}[Uniform positivity and boundedness]
    \label{ass:uniform-bounds}
    There exist constants $0<b_0\leq b_1<\infty$ such that
    \begin{equation}
        \label{eq:uniform-bounds}
        b_0\leq b(x;\bm y)\leq b_1
        \qquad\text{for a.e. }x\in D\text{ and every }\bm y\in U.
    \end{equation}
    The constants $b_0$ and $b_1$ are independent of $\eps$ and $\bm y$.
\end{assumption}

\begin{assumption}[Weighted $\mfs_{y}$-Gevrey regularity]
    \label{ass:gevrey-b}
    Let $\mfs_{y}\geq1$.  For some nonnegative sequence
    $\bm\beta=(\beta_j)_{j\geq1}$, assume that
    \begin{equation*}
        b\in\cG_{\bm\beta}^{\mfs_y}(U;L^\infty(D)).
    \end{equation*}
    We fix constants $C_b>0$ and $\rho_b>0$ for which
    \begin{equation}
        \label{eq:gevrey-b-bound}
        \sup_{\bm y\in U}
        \norm{\partial_{\bm y}^{\bm\nu}b(\cdot;\bm y)}_{L^\infty(D)}
        \leq
        C_b(\abs{\bm\nu}!)^{\mfs_{y}}\rho_b^{\abs{\bm\nu}}
        \bm\beta^{\bm\nu}
        \qquad\forall\bm\nu\in\cF.
    \end{equation}
    The constants $C_b$ and $\rho_b$, and the sequence $\bm\beta$, are independent
    of $\eps$.
\end{assumption}

\begin{remark}
    The sequence $\bm\beta$ describes the anisotropy of the parameter dependence.
    No summability condition is needed for the regularity result below. Conditions
    such as $\bm\beta\in\ell^p(\bN)$ will enter later in the dimension-truncation
    and QMC analyses. 
    The factor $\rho_b$ could be absorbed into the sequence $\bm\beta$.
\end{remark}

For $\eps\in(0,1]$ and for $v\in H^1(D)$, define the $\norm{v}_{1,\eps,D}$ norm 
via
\begin{equation}
    \label{eq:energy-norm}
    \norm{v}_{1,\eps,D}^2
    \coloneqq\eps^2\norm{\nabla v}_{L^2(D)}^2+\norm{v}_{L^2(D)}^2.
\end{equation}
When the domain is clear from the context, we write $\norm{v}_{1,\eps}$ in place of $\norm{v}_{1,\eps,D}$.
We further let
\begin{equation}
    \label{eq:Veps}
    V_\eps\coloneqq\overline{C_0^\infty(D)}^{\norm{\cdot}_{1,\eps,D}}.
\end{equation}
For $\eps>0$, the space $V_\eps$ is $H_0^1(D)$ as a set, equipped with the
$\eps$-dependent norm \eqref{eq:energy-norm}. 
On the dual space $V_\eps'$, we use the $\eps$-dependent dual norm
\begin{equation}
    \label{eq:dual-norm}
    \norm{F}_{-1,\eps,D}
    \coloneqq\norm{F}_{V_\eps'}
    \coloneqq\sup_{v\in V_\eps\setminus\{0\}}
    \frac{\abs{\inp{F}{v}_{V_\eps'\times V_\eps}}}
    {\norm{v}_{1,\eps,D}}.
\end{equation}
Identifying $L^2(D)$ with its dual, 
as $\eps\downarrow 0$, 
formally both norms tend to the $\norm{\cdot}_{L^2(D)}$ norm, so that 
for $0\leq \eps \leq 1$, $V_\eps \subseteq L^2(D) \simeq L^2(D)^* \subseteq V_\eps'$,
with equality in the inclusions for $\eps = 0$.
The canonical embedding $L^2(D)\hookrightarrow V_\eps'$ has norm at most one,
uniformly in $\eps\in(0,1]$. 
Consequently,
\begin{equation}
    \label{eq:L2-dual-embedding}
    \norm{f}_{-1,\eps,D}\leq\norm{f}_{L^2(D)}.
\end{equation}

For $\bm y\in U$, define the parametric bilinear form
\begin{equation}
    \label{eq:bilinear-form}
    B_\eps(w,v;\bm y)
    \coloneqq
    \eps^2\int_D\nabla w(x)\cdot\nabla v(x)\,\dif x
    +\int_D b(x;\bm y)w(x)v(x)\,\dif x,
    \qquad w,v\in V_\eps.
\end{equation}
Then the weak formulation of \eqref{eq:model-pde}--\eqref{eq:model-bc} 
is to find $u_\eps(\cdot;\bm y)\in V_\eps$ such that
\begin{equation}
    \label{eq:weak-problem}
    B_\eps(u_\eps(\cdot;\bm y),v;\bm y)
    =\inp{f}{v}_{V_\eps'\times V_\eps}
    \qquad\forall v\in V_\eps.
\end{equation}
Since $f\in L^2(D)$, the right-hand side in \eqref{eq:weak-problem} equals
$(f,v)_{L^2(D)}$.

By \cref{ass:uniform-bounds}, the bilinear form \eqref{eq:bilinear-form} is
continuous and coercive, uniformly in $\eps$ and $\bm y$. More precisely,
\begin{align}
    \abs{B_\eps(w,v;\bm y)}
    &\leq(1+b_1)\norm{w}_{1,\eps,D}\norm{v}_{1,\eps,D},
    &&w,v\in V_\eps,
    \label{eq:continuity}\\
    B_\eps(v,v;\bm y)
    &\geq\min\{1,b_0\}\norm{v}_{1,\eps,D}^2,
    &&v\in V_\eps.
    \label{eq:coercivity}
\end{align}
The constants in \eqref{eq:continuity} and \eqref{eq:coercivity} are
independent of $\eps$ and $\bm y$. Set
\begin{equation}
    \label{eq:B-constant}
    B\coloneqq\frac{1}{\min\{1,b_0\}}.
\end{equation}
In particular, $B$ is independent of $\eps$ and $\bm y$.

\begin{proposition}[Uniform well-posedness]
    \label{prop:wellposedness}
    Under \cref{ass:uniform-bounds}, for every $\eps\in(0,1]$ and $\bm y\in U$,
    problem \eqref{eq:weak-problem} has a unique solution
    $u_\eps(\cdot;\bm y)\in V_\eps$. Moreover,
    \begin{equation}
        \label{eq:solution-bound}
        \norm{u_\eps(\cdot;\bm y)}_{1,\eps,D}
        \leq B\norm{f}_{-1,\eps,D}
        \leq B\norm{f}_{L^2(D)},
    \end{equation}
    where $B$ is the $\eps$-independent constant defined in
    \eqref{eq:B-constant}.
\end{proposition}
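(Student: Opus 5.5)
The plan is to apply the Lax--Milgram lemma on the Hilbert space $V_\eps$ (which is $H_0^1(D)$ with the equivalent norm $\norm{\cdot}_{1,\eps,D}$, hence complete) for each fixed $\eps\in(0,1]$ and $\bm y\in U$. First I will confirm that $B_\eps(\cdot,\cdot;\bm y)$ is bilinear and bounded on $V_\eps\times V_\eps$, using the continuity estimate \eqref{eq:continuity}. That estimate follows from Cauchy--Schwarz applied separately to the diffusion and reaction terms, together with $\abs{b}\leq b_1$ from \cref{ass:uniform-bounds}. Next I will record coercivity \eqref{eq:coercivity}, which holds because $B_\eps(v,v;\bm y)\geq \eps^2\norm{\nabla v}^2_{L^2}+b_0\norm{v}_{L^2}^2\geq \min\{1,b_0\}\norm{v}_{1,\eps,D}^2$. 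Finally, the right-hand side $v\mapsto (f,v)_{L^2(D)}$ is a bounded functional on $V_\eps$, with norm $\norm{f}_{-1,\eps,D}\leq\norm{f}_{L^2(D)}$ by \eqref{eq:L2-dual-embedding}, since $\abs{(f,v)}\leq\norm{f}_{L^2}\norm{v}_{L^2}\leq\norm{f}_{L^2}\norm{v}_{1,\eps,D}$.

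Lax--Milgram then yields existence and uniqueness of $u_\eps(\cdot;\bm y)\in V_\eps$. For the a priori bound, I will test \eqref{eq:weak-problem} with $v=u_\eps(\cdot;\bm y)$:
\begin{equation*}
\min\{1,b_0\}\norm{u_\eps}_{1,\eps,D}^2\leq B_\eps(u_\eps,u_\eps;\bm y)=\inp{f}{u_\eps}\leq \norm{f}_{-1,\eps,D}\norm{u_\eps}_{1,\eps,D}.
\end{equation*}
Dividing by $\norm{u_\eps}_{1,\eps,D}$ (the case $u_\eps=0$ being trivial) gives the first inequality in \eqref{eq:solution-bound} with $B=1/\min\{1,b_0\}$. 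The second inequality is \eqref{eq:L2-dual-embedding}.

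There is no genuine obstacle here. The only point that needs care is that every constant is independent of $\eps$ and $\bm y$. This holds because the $\eps^2$ weighting in the norm exactly matches the diffusion coefficient, and $b_0,b_1$ are uniform by \cref{ass:uniform-bounds}. Measurability of $b(\cdot;\bm y)$, needed so that the reaction integral makes sense, follows from $b(\cdot;\bm y)\in L^\infty(D)$.
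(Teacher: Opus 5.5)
Your proposal is correct and follows the paper's proof: Lax--Milgram on $V_\eps$ using \eqref{eq:continuity} and \eqref{eq:coercivity}, then testing \eqref{eq:weak-problem} with $v=u_\eps(\cdot;\bm y)$ for the first bound and \eqref{eq:L2-dual-embedding} for the second. The only difference is that you also write out the Cauchy--Schwarz checks for continuity and coercivity, which the paper states without proof.
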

\begin{proof}
    The assertion follows from the Lax--Milgram theorem, using the continuity
    estimate \eqref{eq:continuity} and the coercivity estimate
    \eqref{eq:coercivity}. Testing \eqref{eq:weak-problem} with
    $v=u_\eps(\cdot;\bm y)$ gives the first inequality in
    \eqref{eq:solution-bound}; the second follows from
    \eqref{eq:L2-dual-embedding}. Neither estimate introduces a constant depending
    on $\eps$ or $\bm y$.
\end{proof}
\subsection{Equations for the mixed parametric derivatives}
\label{subsec:derivative-equations}
We first record the stability of the solution with respect to perturbations of
its reaction coefficient. Let $\bm y,\widetilde{\bm y}\in U$, and set
\begin{equation*}
    w_\eps(\cdot;\bm y,\widetilde{\bm y})
    \coloneqq u_\eps(\cdot;\bm y)-u_\eps(\cdot;\widetilde{\bm y}).
\end{equation*}
Subtracting the two weak formulations gives
\begin{equation}
    \label{eq:coefficient-stability-equation}
    \begin{split}
        B_\eps(w_\eps(\cdot;\bm y,\widetilde{\bm y}),v;\bm y)
        ={}&-\int_D\bigl(b(x;\bm y)-b(x;\widetilde{\bm y})\bigr)
        u_\eps(x;\widetilde{\bm y})v(x)\,\dif x,
        \qquad\forall v\in V_\eps.
    \end{split}
\end{equation}
Choosing $v=w_\eps(\cdot;\bm y,\widetilde{\bm y})$, using
\eqref{eq:coercivity}, and then applying \eqref{eq:solution-bound}, we obtain
\begin{equation}
    \label{eq:coefficient-stability}
    \norm{u_\eps(\cdot;\bm y)-u_\eps(\cdot;\widetilde{\bm y})}_{1,\eps,D}
    \leq B^2
    \norm{b(\cdot;\bm y)-b(\cdot;\widetilde{\bm y})}_{L^\infty(D)}
    \norm{f}_{-1,\eps,D}.
\end{equation}
Here and below, $B$ is the same constant as in \eqref{eq:B-constant}. In
particular, the stability constant in \eqref{eq:coefficient-stability} is
independent of $\eps\in(0,1]$.

For a parameter-dependent map $g$ and a coordinate direction $k\geq1$, we use
the coordinate difference quotient
\begin{equation}
    \label{eq:difference-quotient-operator}
    \delta_h^k g(\cdot;\bm y)
    \coloneqq
    \frac{g(\cdot;\bm y+h\bm e_k)-g(\cdot;\bm y)}{h},
\end{equation}
whenever $h\neq0$ is sufficiently small and
$\bm y+h\bm e_k\in U$. 
At an endpoint of a coordinate interval, 
only admissible one-sided quotients are used. 
Their limits agree with the
derivatives of the finite-dimensional extensions 
in \cref{ass:gevrey-b}.
\begin{proposition}[Differentiated variational equations]
    \label{prop:derivative-equation}
    Let $\eps\in(0,1]$ and suppose \cref{ass:uniform-bounds,ass:gevrey-b} hold.
    
    Then for every $\bm y \in U$ and for every $\bm\nu\in\cF$,
    the map $\bm y\mapsto u_\eps(\cdot;\bm y)$ 
    admits a continuous mixed derivative
    $\partial_{\bm y}^{\bm\nu}u_\eps(\cdot;\bm y)\in V_\eps$.
    For every nonzero $\bm\nu\in\cF$, these derivatives are the
    unique solutions of the following variational problems: for every $v\in V_\eps$,
    \begin{equation}
        \label{eq:derivative-variational-equation}
        B_\eps(\partial_{\bm y}^{\bm\nu}u_\eps(\cdot;\bm y),v;\bm y)
        =-\sum_{\bm0<\bm\mu\leq\bm\nu}
        \binom{\bm\nu}{\bm\mu}
        \int_D
        \partial_{\bm y}^{\bm\mu}b(x;\bm y)
        \partial_{\bm y}^{\bm\nu-\bm\mu}u_\eps(x;\bm y)
        v(x)\,\dif x.
    \end{equation}
    Consequently,
    \begin{equation}
        \label{eq:derivative-recursion}
        \begin{split}
            \norm{\partial_{\bm y}^{\bm\nu}u_\eps(\cdot;\bm y)}_{1,\eps,D}
            \leq B
            \sum_{\bm0<\bm\mu\leq\bm\nu}
            \binom{\bm\nu}{\bm\mu}
            \norm{\partial_{\bm y}^{\bm\mu}b(\cdot;\bm y)}_{L^\infty(D)}
            \norm{\partial_{\bm y}^{\bm\nu-\bm\mu}u_\eps(\cdot;\bm y)}_{1,\eps,D},
        \end{split}
    \end{equation}
    where $B$ is the same $\eps$-independent stability constant as in
    \eqref{eq:B-constant}, \eqref{eq:solution-bound}, and
    \eqref{eq:coefficient-stability}.
\end{proposition}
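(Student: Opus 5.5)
We argue by induction on $\abs{\bm\nu}$, realizing each derivative as a limit of difference quotients \eqref{eq:difference-quotient-operator} in $V_\eps$.

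For the base case $\bm\nu=\bm e_k$, fix $\bm y\in U$ and an admissible $h\neq0$. Dividing \eqref{eq:coefficient-stability-equation} (with $\widetilde{\bm y}=\bm y$ and $\bm y+h\bm e_k$ in place of $\bm y$) by $h$ gives
\begin{equation*}
B_\eps(\delta_h^k u_\eps(\cdot;\bm y),v;\bm y+h\bm e_k)=-\int_D \delta_h^k b(x;\bm y)\,u_\eps(x;\bm y)v(x)\,\dif x .
\end{equation*}
Let $z\in V_\eps$ be the Lax--Milgram solution of $B_\eps(z,v;\bm y)=-\int_D\partial_{y_k}b\,u_\eps v\,\dif x$. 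Subtracting the two equations and testing with $v=\delta_h^k u_\eps-z$ leaves two error terms:
\begin{equation*}
\int_D\bigl(b(\bm y+h\bm e_k)-b(\bm y)\bigr)\delta_h^k u_\eps\, v\,\dif x
\quad\text{and}\quad
\int_D\bigl(\delta_h^k b-\partial_{y_k}b\bigr)u_\eps v\,\dif x .
\end{equation*}
Coercivity \eqref{eq:coercivity} bounds $\norm{\delta_h^k u_\eps-z}_{1,\eps,D}$ by $B$ times the $L^\infty(D)$-norms of these two coefficient differences, multiplied by $\norm{\delta_h^k u_\eps}_{1,\eps,D}$ and $\norm{u_\eps}_{1,\eps,D}$ respectively. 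Both factors are bounded uniformly in $h$ by \eqref{eq:coefficient-stability} and the mean value theorem applied to \eqref{eq:gevrey-b-bound} with $\abs{\bm\nu}=1$. By \cref{ass:gevrey-b} (the finite-dimensional sections are $C^\infty$ into $L^\infty(D)$ near the cube), $\norm{\delta_h^k b-\partial_{y_k}b}_{L^\infty}=O(h)$ via Taylor's theorem with the bound on second derivatives, and $\norm{b(\bm y+h\bm e_k)-b(\bm y)}_{L^\infty}=O(h)$. Hence $\partial_{y_k}u_\eps=z$ exists in $V_\eps$ and satisfies \eqref{eq:derivative-variational-equation}. At endpoints we use only one-sided quotients, as stipulated.

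For the induction step, assume the claim holds for all $\bm\mu<\bm\nu$ (in particular for $\bm\nu-\bm e_k$ with $k\in\supp\bm\nu$). The identity \eqref{eq:derivative-variational-equation} for $\bm\nu-\bm e_k$ can be rewritten as
\begin{equation*}
\sum_{\bm 0\le\bm\mu\le\bm\nu-\bm e_k}\binom{\bm\nu-\bm e_k}{\bm\mu}\int_D\partial^{\bm\mu}b\;\partial^{\bm\nu-\bm e_k-\bm\mu}u_\eps\,v\,\dif x+\eps^2\int_D\nabla\partial^{\bm\nu-\bm e_k}u_\eps\cdot\nabla v\,\dif x=(f,v)\,\delta_{\bm\nu-\bm e_k,\bm 0}.
\end{equation*}
We apply $\delta_h^k$ to this identity using the discrete Leibniz rule $\delta_h^k(gw)=(\delta_h^k g)\,w(\cdot+h\bm e_k)+g\,\delta_h^k w$. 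The lower-order terms $\partial^{\bm\nu-\bm e_k-\bm\mu}u_\eps$ with $\bm\mu\neq\bm0$ are, by induction, differentiable in $y_k$, with derivatives of order $<\abs{\bm\nu}$. The coefficient derivatives converge in $L^\infty$ by \cref{ass:gevrey-b}. The top-order term is handled exactly as in the base case: it yields an equation for $\delta_h^k\partial^{\bm\nu-\bm e_k}u_\eps$ whose right-hand side converges in $V_\eps'$. Coercivity then gives convergence in $V_\eps$ to the solution of the limiting problem. Collecting terms via Pascal's rule $\binom{\bm\nu-\bm e_k}{\bm\mu}+\binom{\bm\nu-\bm e_k}{\bm\mu-\bm e_k}=\binom{\bm\nu}{\bm\mu}$ produces exactly \eqref{eq:derivative-variational-equation}. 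The limit is independent of the ordering of differentiation, since \eqref{eq:derivative-variational-equation} has a unique solution depending only on $\bm\nu$.

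Uniqueness of the solution of \eqref{eq:derivative-variational-equation} follows from Lax--Milgram, because its right-hand side is a bounded functional on $V_\eps$ given the lower-order derivatives. Continuity of $\bm y\mapsto\partial^{\bm\nu}u_\eps$ follows inductively by the same perturbation argument as \eqref{eq:coefficient-stability}, using continuity of $\partial^{\bm\mu}b$ into $L^\infty(D)$. Finally, the bound \eqref{eq:derivative-recursion} follows by testing \eqref{eq:derivative-variational-equation} with $v=\partial^{\bm\nu}u_\eps$. Using \eqref{eq:coercivity}, Cauchy--Schwarz, $\norm{w}_{L^2}\le\norm{w}_{1,\eps,D}$ and dividing by $\norm{\partial^{\bm\nu}u_\eps}_{1,\eps,D}$ gives the estimate with constant $B$ and no dependence on $\eps$.

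The main obstacle is the bookkeeping of the induction step: showing that all shifted lower-order terms converge in the right norms and that the combinatorics yields exactly $\binom{\bm\nu}{\bm\mu}$. The key point is that only $L^2$-norms of $u$-derivatives, and never their gradients, appear on the right-hand side. This is why the argument stays $\eps$-uniform.
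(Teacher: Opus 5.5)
Your proposal is correct and follows essentially the same route as the paper. Both use difference quotients converging in $V_\eps$ by $\eps$-uniform coercivity, an induction step via the discrete Leibniz rule with convergence of the right-hand side in $V_\eps'$, the multi-index Pascal identity, uniqueness to make the derivative independent of the order of differentiation, and testing with $\partial_{\bm y}^{\bm\nu}u_\eps$ for the recursive bound.

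The only difference is cosmetic and sits in your base case. You write the difference-quotient equation with the bilinear form at $\bm y+h\bm e_k$ and compare directly with the limit $z$, which also gives an $O(h)$ rate. The paper instead keeps the form at $\bm y$, places the shifted solution $u_\eps(\cdot;\bm y+h\bm e_k)$ on the right, and simply notes that this right-hand side converges in $V_\eps'$.
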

\begin{proof}
    We prove the existence of the derivatives and
    \eqref{eq:derivative-variational-equation} simultaneously by induction on the
    order of the derivative. Throughout the proof, the perturbation parameter
    $\eps\in(0,1]$ is arbitrary but fixed.
    
    We begin with the first-order derivatives. Fix $j\geq1$ and $\bm y\in U$.
    For $h\neq0$ sufficiently small with $\bm y+h\bm e_j\in U$, set
    \begin{equation}
        \label{eq:first-difference-quotient}
        w_h(\cdot;\bm y)
        \coloneqq
        \delta_h^j u_\eps(\cdot;\bm y)
        \in V_\eps.
    \end{equation}
    Both the original and shifted parameters belong to $U$, so their coefficients
    have the same lower bound $b_0$. In particular, all stability estimates below
    use the constant $B$ from \eqref{eq:B-constant}.
    
    Subtracting the weak formulations at $\bm y+h\bm e_j$ and $\bm y$, and then
    dividing by $h$, shows that
    \begin{equation}
        \label{eq:first-difference-equation}
        B_\eps(w_h(\cdot;\bm y),v;\bm y)
        =-\int_D\delta_h^j b(x;\bm y)
        u_\eps(x;\bm y+h\bm e_j)v(x)\,\dif x,
        \qquad\forall v\in V_\eps.
    \end{equation}
    By \eqref{eq:coefficient-stability},
    $u_\eps(\cdot;\bm y+h\bm e_j)\to u_\eps(\cdot;\bm y)$ in $V_\eps$ as
    $h\to0$. Moreover,
    \begin{equation*}
        \delta_h^j b(\cdot;\bm y)
        \longrightarrow\partial_{y_j}b(\cdot;\bm y)
        \qquad\text{in }L^\infty(D).
    \end{equation*}
    Hence the right-hand side of \eqref{eq:first-difference-equation} converges in
    $V_\eps'$ to
    \begin{equation*}
        v\longmapsto-\int_D\partial_{y_j}b(x;\bm y)
        u_\eps(x;\bm y)v(x)\,\dif x.
    \end{equation*}
    Uniform coercivity therefore 
    implies that $w_h(\cdot;\bm y)$ converges in
    $V_\eps$ to the unique solution of
    \begin{equation}
        \label{eq:first-derivative-equation}
        B_\eps(\partial_{y_j}u_\eps(\cdot;\bm y),v;\bm y)
        =-\int_D\partial_{y_j}b(x;\bm y)
        u_\eps(x;\bm y)v(x)\,\dif x,
        \qquad\forall v\in V_\eps.
    \end{equation}
    This proves the assertion for all derivatives of order one. 
    To also verify continuity, we use the following standard comparison. 
    If
    $B_\eps(q(\bm y),v;\bm y)=F(\bm y)(v)$, 
    then
    \begin{equation}
        \label{eq:varying-problem-stability}
        \begin{split}
            \norm{q(\bm y)-q(\widetilde{\bm y})}_{1,\eps,D}
            \leq B\bigl(&\norm{F(\bm y)-F(\widetilde{\bm y})}_{-1,\eps,D}
            \\
            &+\norm{b(\cdot;\bm y)-b(\cdot;\widetilde{\bm y})}_{L^\infty(D)}
            \norm{q(\widetilde{\bm y})}_{1,\eps,D}\bigr).
        \end{split}
    \end{equation}
    Indeed, this follows by subtracting the two equations and using coercivity.
    The right-hand side in \eqref{eq:first-derivative-equation} is continuous in
    $V_\eps'$, so \eqref{eq:varying-problem-stability} shows that
    $\bm y\mapsto\partial_{y_j}u_\eps(\cdot;\bm y)$ is continuous as a
    $V_\eps$-valued map. The constant in this argument is independent of $\eps$.
    
    Assume now that, for some $n\geq1$, all mixed derivatives of order at most $n$
    exist, are continuous as $V_\eps$-valued functions, and satisfy
    \eqref{eq:derivative-variational-equation}. Let $\bm\nu\in\cF$ with
    $\abs{\bm\nu}=n$, fix $k\geq1$, and set
    \begin{equation}
        \label{eq:higher-difference-quotient}
        z_h(\cdot;\bm y)
        \coloneqq
        \delta_h^k\!\left(\partial_{\bm y}^{\bm\nu}u_\eps\right)
        (\cdot;\bm y).
    \end{equation}
    Subtracting the two equations for
    $\partial_{\bm y}^{\bm\nu}u_\eps(\cdot;\bm y+h\bm e_k)$ and
    $\partial_{\bm y}^{\bm\nu}u_\eps(\cdot;\bm y)$ yields, for every
    $v\in V_\eps$,
    \begin{equation}
        \label{eq:higher-difference-equation}
        \begin{aligned}
            B_{\eps}(z_{h}(\cdot;\bm{y}),v;\bm{y})
            &=
            -\int_{D} \delta_{h}^{k} b(x;\bm{y})
            \partial_{\bm{y}}^{\bm{\nu}}
            u_{\eps}(x;\bm{y}+h\bm{e}_{k})
            v(x)\,\dif{x}
            \\
            &\phantom{=}
            -\sum_{\bm{0}<\bm{\mu}\leq\bm{\nu}}
            \binom{\bm{\nu}}{\bm{\mu}}
            \int_{D} \delta_{h}^{k}\!\left[
            \partial_{\bm{y}}^{\bm{\mu}}b(\cdot;\bm{y})
            \right.
            \\
            &\qquad\left.
            {}\times
            \partial_{\bm{y}}^{\bm{\nu}-\bm{\mu}}
            u_{\eps}(\cdot;\bm{y})
            \right]\!(x)
            v(x)\,\dif{x}.
        \end{aligned}
    \end{equation}
    In the second line, $\delta_h^k$ acts on the complete
    parameter-dependent product in parentheses. 
    By the induction hypothesis and the regularity of
    $b(\cdot;\bm y)$, the right-hand side of
    \eqref{eq:higher-difference-equation} converges in $V_\eps'$ to
    \begin{align*}
        &-\int_D\partial_{y_k}b(x;\bm y)
        \partial_{\bm y}^{\bm\nu}u_\eps(x;\bm y)v(x)\,\dif x
        \\
        &-\sum_{\bm0<\bm\mu\leq\bm\nu}
        \binom{\bm\nu}{\bm\mu}
        \int_D\partial_{\bm y}^{\bm\mu+\bm e_k}b(x;\bm y)
        \partial_{\bm y}^{\bm\nu-\bm\mu}u_\eps(x;\bm y)v(x)\,\dif x
        \\
        &-\sum_{\bm0<\bm\mu\leq\bm\nu}
        \binom{\bm\nu}{\bm\mu}
        \int_D\partial_{\bm y}^{\bm\mu}b(x;\bm y)
        \partial_{\bm y}^{\bm\nu-\bm\mu+\bm e_k}u_\eps(x;\bm y)v(x)\,\dif x.
    \end{align*}
    All solution derivatives on the right-hand side have order at most $n$ and
    are therefore available by the induction hypothesis. Once again, uniform
    coercivity shows that $z_h(\cdot;\bm y)$ converges in $V_\eps$ to a unique
    limit. This limit is
    $\partial_{\bm y}^{\bm\nu+\bm e_k}u_\eps(\cdot;\bm y)$.
    
    Using the multi-index Pascal identity
    \begin{equation*}
        \binom{\bm\nu+\bm e_k}{\bm\kappa}
        =\binom{\bm\nu}{\bm\kappa}
        +\binom{\bm\nu}{\bm\kappa-\bm e_k},
    \end{equation*}
    with the convention that an inadmissible binomial coefficient is zero, the
    three terms in the preceding limit combine to
    \begin{equation*}
        \begin{split}
            &B_\eps(\partial_{\bm y}^{\bm\nu+\bm e_k}
            u_\eps(\cdot;\bm y),v;\bm y)
            \\
            &\qquad=-\sum_{\bm0<\bm\kappa\leq\bm\nu+\bm e_k}
            \binom{\bm\nu+\bm e_k}{\bm\kappa}
            \int_D\partial_{\bm y}^{\bm\kappa}b(x;\bm y)
            \partial_{\bm y}^{\bm\nu+\bm e_k-\bm\kappa}
            u_\eps(x;\bm y)v(x)\,\dif x.
        \end{split}
    \end{equation*}
    This is \eqref{eq:derivative-variational-equation} at order $n+1$. Its
    right-hand side depends continuously on $\bm y$, and the stability estimate
    \eqref{eq:varying-problem-stability} then also gives continuity of
    $\partial_{\bm y}^{\bm\nu+\bm e_k}u_\eps(\cdot;\bm y)$. The induction is
    complete. If the same multi-index is obtained by differentiating in a
    different order, both constructions solve \eqref{eq:derivative-variational-equation}
    with the same right-hand side. Uniqueness therefore shows that all such
    iterated derivatives agree.
    
    It remains to prove \eqref{eq:derivative-recursion}. In
    \eqref{eq:derivative-variational-equation}, choose
    $v=\partial_{\bm y}^{\bm\nu}u_\eps(\cdot;\bm y)$. By coercivity,
    Cauchy--Schwarz, and the inequality
    $\norm{v}_{L^2(D)}\leq\norm{v}_{1,\eps,D}$, we find
    \begin{align*}
        &\min\{1,b_0\}
        \norm{\partial_{\bm y}^{\bm\nu}u_\eps(\cdot;\bm y)}_{1,\eps,D}^2
        \\
        &\quad\leq
        \sum_{\bm0<\bm\mu\leq\bm\nu}
        \binom{\bm\nu}{\bm\mu}
        \norm{\partial_{\bm y}^{\bm\mu}b(\cdot;\bm y)}_{L^\infty(D)}
        \norm{\partial_{\bm y}^{\bm\nu-\bm\mu}u_\eps(\cdot;\bm y)}_{1,\eps,D}
        \norm{\partial_{\bm y}^{\bm\nu}u_\eps(\cdot;\bm y)}_{1,\eps,D}.
    \end{align*}
    If the final norm on the right-hand side is zero, the desired estimate is
    immediate; otherwise, we cancel it. Then \eqref{eq:derivative-recursion} follows with
    $B=1/\min\{1,b_0\}$. Thus the constant in the recursive estimate is precisely
    the same $\eps$-independent constant as in the basic solution estimate.
\end{proof}
The following elementary estimate controls the convolution in
\eqref{eq:derivative-recursion}.
\begin{lemma}[Gevrey convolution estimate]
    \label{lem:gevrey-convolution}
    Let $\mfs_{y}\geq1$, $\eta>1$, and let $\bm\nu\in\cF$ with $n\coloneqq\abs{\bm\nu}\geq1$. 
    Then
    \begin{equation}
        \label{eq:gevrey-convolution}
        \sum_{\bm0<\bm\mu\leq\bm\nu}
        \binom{\bm\nu}{\bm\mu}
        (\abs{\bm\mu}!)^{\mfs_{y}}
        (\abs{\bm\nu-\bm\mu}!)^{\mfs_{y}}
        \eta^{\abs{\bm\nu-\bm\mu}}
        \leq
        (n!)^{\mfs_{y}}\frac{\eta^n}{\eta-1}.
    \end{equation}
    The constant on the right-hand side is purely combinatorial and, in
    particular, independent of $\eps$.
\end{lemma}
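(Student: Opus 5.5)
Group the sum by the order $k\coloneqq\abs{\bm\mu}\in\{1,\dots,n\}$ and use the multi-index Vandermonde identity
\begin{equation*}
    \sum_{\bm\mu\leq\bm\nu,\ \abs{\bm\mu}=k}\binom{\bm\nu}{\bm\mu}=\binom{n}{k},
\end{equation*}
which follows by comparing coefficients of $t^k$ in $\prod_{j\in\supp(\bm\nu)}(1+t)^{\nu_j}=(1+t)^n$. Since the summand in \eqref{eq:gevrey-convolution} depends on $\bm\mu$ only through $k$ (apart from the binomial), the left-hand side becomes
\begin{equation*}
    \sum_{k=1}^n\binom{n}{k}(k!)^{\mfs_{y}}((n-k)!)^{\mfs_{y}}\eta^{n-k}
    =\sum_{k=1}^n n!\,\bigl(k!(n-k)!\bigr)^{\mfs_{y}-1}\eta^{n-k}.
\end{equation*}
This reduces the multi-index statement to a one-dimensional estimate.

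Next I bound the Gevrey excess factor. Because $\mfs_{y}\geq1$ and $k!(n-k)!=n!/\binom{n}{k}\leq n!$ (as $\binom nk\geq1$), we get $\bigl(k!(n-k)!\bigr)^{\mfs_{y}-1}\leq(n!)^{\mfs_{y}-1}$. Consequently each term is at most $(n!)^{\mfs_{y}}\eta^{n-k}$. This is the only place $\mfs_{y}\geq1$ enters, and it is where the loss (discarding $\binom nk^{1-\mfs_y}\le 1$) is harmless.

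Finally I sum the geometric series:
\begin{equation*}
    \sum_{k=1}^n\eta^{n-k}=\sum_{m=0}^{n-1}\eta^m=\frac{\eta^n-1}{\eta-1}\leq\frac{\eta^n}{\eta-1},
\end{equation*}
using $\eta>1$. Combining the three steps gives \eqref{eq:gevrey-convolution}. None of the steps involve $\eps$, so the constant is purely combinatorial. The only mildly delicate step is the Vandermonde regrouping; I do not expect a genuine obstacle.
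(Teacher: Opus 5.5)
Your proposal is correct and follows the paper's proof essentially step for step: grouping by $k=\abs{\bm\mu}$ via the generalized Vandermonde identity, bounding $\binom{n}{k}(k!)^{\mfs_{y}}((n-k)!)^{\mfs_{y}}=(n!)^{\mfs_{y}}\binom{n}{k}^{1-\mfs_{y}}\leq(n!)^{\mfs_{y}}$ using $\mfs_{y}\geq1$, and summing the geometric series in $\eta$. Your generating-function justification of the Vandermonde identity is a small addition that the paper leaves implicit.
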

\begin{proof}
    The generalized Vandermonde identity gives
    \begin{equation*}
        \sum_{\substack{\bm\mu\leq\bm\nu\\\abs{\bm\mu}=k}}
        \binom{\bm\nu}{\bm\mu}=\binom{n}{k}.
    \end{equation*}
    Moreover, for $1\leq k\leq n$ and $\mfs_{y}\geq1$,
    \begin{equation*}
        \binom{n}{k}(k!)^{\mfs_{y}}((n-k)!)^{\mfs_{y}}
        =(n!)^{\mfs_{y}}\binom{n}{k}^{1-\mfs_{y}}
        \leq(n!)^{\mfs_{y}}.
    \end{equation*}
    Grouping the left-hand side of \eqref{eq:gevrey-convolution} according to
    $k=\abs{\bm\mu}$ therefore yields
    \begin{equation*}
        \sum_{k=1}^n
        \binom{n}{k}(k!)^{\mfs_{y}}((n-k)!)^{\mfs_{y}}\eta^{n-k}
        \leq(n!)^{\mfs_{y}}\sum_{k=1}^n\eta^{n-k}
        \leq(n!)^{\mfs_{y}}\frac{\eta^n}{\eta-1}.
    \end{equation*}
\end{proof}
\subsection{\texorpdfstring{$\eps$}{ε}-robust \texorpdfstring{$\mfs_{y}$}{s}-Gevrey regularity of the solution}
\label{subsec:gevrey-solution}
\begin{theorem}[$\eps$-robust inheritance of Gevrey regularity]
    \label{thm:gevrey-regularity}
    Suppose that \cref{ass:uniform-bounds,ass:gevrey-b} hold. 
    Let $B$ be the $\eps$-independent stability constant in \eqref{eq:B-constant},
    \eqref{eq:solution-bound}, and \eqref{eq:derivative-recursion}, 
    and define
    \begin{equation}
        \label{eq:solution-weights}
        \eta\coloneqq1+BC_b,
        \qquad
        \widehat\beta_j\coloneqq\rho_b\eta\beta_j,
        \quad j\geq1.
    \end{equation}
    Then, for every $\eps\in(0,1]$ and $\bm\nu\in\cF$,
    \begin{equation}
        \label{eq:gevrey-solution-bound}
        \sup_{\bm y\in U}
        \norm{\partial_{\bm y}^{\bm\nu}u_\eps(\cdot;\bm y)}_{1,\eps,D}
        \leq
        B\norm{f}_{-1,\eps,D}
        (\abs{\bm\nu}!)^{\mfs_{y}}
        \widehat{\bm\beta}^{\bm\nu}
        \leq
        B\norm{f}_{L^2(D)}
        (\abs{\bm\nu}!)^{\mfs_{y}}
        \widehat{\bm\beta}^{\bm\nu}.
    \end{equation}
    Since
    \begin{equation*}
        \widehat{\bm\beta}^{\bm\nu}
        =
        (\rho_b\eta)^{\abs{\bm\nu}}\bm\beta^{\bm\nu},
    \end{equation*}
    \eqref{eq:gevrey-solution-bound} implies that, 
    for every $\eps\in(0,1]$,
    \begin{equation}
        \label{eq:solution-gevrey-membership}
        \bm y\longmapsto u_\eps(\cdot;\bm y)
        \in \cG_{\bm\beta}^{\mfs_y}(U;V_\eps).
    \end{equation}
    The corresponding Gevrey constants may be chosen as
    \begin{equation*}
        C_u=B\norm{f}_{L^2(D)},
        \qquad
        \rho_u=\rho_b\eta.
    \end{equation*}
    In particular, $C_u$, $\rho_u$, and $\bm\beta$ are independent of $\eps$.
\end{theorem}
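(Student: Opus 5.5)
We argue by induction on $n=\abs{\bm\nu}$, with $\bm y\in U$ and $\eps\in(0,1]$ fixed, and prove the first inequality in \eqref{eq:gevrey-solution-bound} pointwise in $\bm y$. Write $F\coloneqq B\norm{f}_{-1,\eps,D}$ and $\widehat\rho\coloneqq\rho_b\eta$. Since $\widehat{\bm\beta}^{\bm\nu}=\widehat\rho^{\,n}\bm\beta^{\bm\nu}$, the claim reads
\begin{equation*}
    \norm{\partial_{\bm y}^{\bm\nu}u_\eps(\cdot;\bm y)}_{1,\eps,D}\leq F\,(n!)^{\mfs_y}\rho_b^{n}\eta^{n}\bm\beta^{\bm\nu}.
\end{equation*}
The base case $\bm\nu=\bm0$ is exactly \cref{prop:wellposedness}. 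The second inequality in \eqref{eq:gevrey-solution-bound} then follows from \eqref{eq:L2-dual-embedding}. The membership \eqref{eq:solution-gevrey-membership}, with the stated $C_u,\rho_u$, follows from the bound together with the existence and continuity of the derivatives, which \cref{prop:derivative-equation} already provides. Neither constant involves $\eps$.

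For the induction step, take $n\geq1$ and assume the bound for all multi-indices of order $<n$. Insert \eqref{eq:gevrey-b-bound} and the induction hypothesis into the recursion \eqref{eq:derivative-recursion}. The weights factor multiplicatively: $\bm\beta^{\bm\mu}\bm\beta^{\bm\nu-\bm\mu}=\bm\beta^{\bm\nu}$ and $\rho_b^{\abs{\bm\mu}}\rho_b^{\abs{\bm\nu-\bm\mu}}=\rho_b^n$, including the case where some $\beta_j=0$. This gives
\begin{equation*}
    \norm{\partial_{\bm y}^{\bm\nu}u_\eps}_{1,\eps,D}
    \leq B C_b F\rho_b^n\bm\beta^{\bm\nu}
    \sum_{\bm0<\bm\mu\leq\bm\nu}\binom{\bm\nu}{\bm\mu}(\abs{\bm\mu}!)^{\mfs_y}(\abs{\bm\nu-\bm\mu}!)^{\mfs_y}\eta^{\abs{\bm\nu-\bm\mu}}.
\end{equation*}

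The key step, and the only real point of the proof, is closing the induction with the constant $\eta$. If $BC_b=0$ the right-hand side vanishes and there is nothing to prove, so assume $\eta>1$. Then \cref{lem:gevrey-convolution} bounds the sum by $(n!)^{\mfs_y}\eta^n/(\eta-1)$. Since $\eta-1=BC_b$, the prefactor $BC_b/(\eta-1)$ equals $1$, and we obtain precisely $F(n!)^{\mfs_y}\rho_b^n\eta^n\bm\beta^{\bm\nu}$, which closes the induction.

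The choice $\eta=1+BC_b$ is what makes the Gevrey factorial and the geometric factor reproduce themselves with no loss. Taking the supremum over $\bm y\in U$ finishes the proof. Every constant used ($B$, $C_b$, $\rho_b$, $\eta$) is $\eps$-independent, so the bounds are uniform in $\eps\in(0,1]$.
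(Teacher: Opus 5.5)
Your proposal is correct and follows the paper's proof: induction on $\abs{\bm\nu}$ via the recursion \eqref{eq:derivative-recursion}, the coefficient bound \eqref{eq:gevrey-b-bound}, and \cref{lem:gevrey-convolution}, with the choice $\eta-1=BC_b$ making the prefactor exactly one. The differences are cosmetic: you fold the first-order case (which the paper treats separately) into the general step, and your degenerate case $BC_b=0$ cannot occur, since $B\geq1$ and $C_b>0$ by \cref{ass:gevrey-b}.
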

\begin{proof}
    We prove the first inequality in \eqref{eq:gevrey-solution-bound} by induction
    on the order of the mixed derivative. The second inequality then follows from
    \eqref{eq:L2-dual-embedding}. Throughout the proof, every occurrence of $B$
    denotes the constant in \eqref{eq:B-constant}; in particular, no step of the
    induction introduces a constant depending on $\eps$.
    
    For $\bm\nu=\bm0$, the asserted estimate is exactly
    \eqref{eq:solution-bound}:
    \begin{equation*}
        \norm{u_\eps(\cdot;\bm y)}_{1,\eps,D}
        \leq B\norm{f}_{-1,\eps,D},
        \qquad\bm y\in U.
    \end{equation*}
    For a unit multi-index $\bm e_j$, the recursive estimate
    \eqref{eq:derivative-recursion}, the coefficient bound
    \eqref{eq:gevrey-b-bound}, and the zero-order estimate give
    \begin{align*}
        \norm{\partial_{y_j}u_\eps(\cdot;\bm y)}_{1,\eps,D}
        &\leq B
        \norm{\partial_{y_j}b(\cdot;\bm y)}_{L^\infty(D)}
        \norm{u_\eps(\cdot;\bm y)}_{1,\eps,D}
        \\
        &\leq B\norm{f}_{-1,\eps,D}(BC_b)\rho_b\beta_j
        \\
        &\leq B\norm{f}_{-1,\eps,D}\widehat\beta_j,
    \end{align*}
    where we used $BC_b<\eta$. This establishes the first-order case with the same
    $\eps$-independent constant $B$.
    
    Let now $n\geq1$ and assume that
    \begin{equation}
        \label{eq:induction-hypothesis}
        \norm{\partial_{\bm y}^{\bm\lambda}u_\eps(\cdot;\bm y)}_{1,\eps,D}
        \leq B\norm{f}_{-1,\eps,D}
        (\abs{\bm\lambda}!)^{\mfs_{y}}
        \widehat{\bm\beta}^{\bm\lambda}
    \end{equation}
    holds for every $\bm y\in U$ and every $\bm\lambda\in\cF$ with
    $\abs{\bm\lambda}\leq n$. Fix $\bm\nu\in\cF$ with
    $\abs{\bm\nu}=n$, a unit multi-index $\bm e_k$, and set
    \begin{equation*}
        \widetilde{\bm\nu}\coloneqq\bm\nu+\bm e_k,
        \qquad
        \abs{\widetilde{\bm\nu}}=n+1.
    \end{equation*}
    Applying \eqref{eq:derivative-recursion} to $\widetilde{\bm\nu}$ gives
    \begin{align*}
        &\norm{\partial_{\bm y}^{\widetilde{\bm\nu}}
            u_\eps(\cdot;\bm y)}_{1,\eps,D}
        \leq 
        B
        \sum_{\bm0<\bm\mu\leq\widetilde{\bm\nu}}
        \binom{\widetilde{\bm\nu}}{\bm\mu}
        \norm{\partial_{\bm y}^{\bm\mu}b(\cdot;\bm y)}_{L^\infty(D)}
        \norm{\partial_{\bm y}^{\widetilde{\bm\nu}-\bm\mu}
            u_\eps(\cdot;\bm y)}_{1,\eps,D}.
    \end{align*}
    Since $\bm\mu>\bm0$, the derivative of the solution in every summand has
    order at most $n$, so the induction hypothesis applies. Using
    \eqref{eq:gevrey-b-bound} and
    \begin{equation*}
        \widehat{\bm\beta}^{\widetilde{\bm\nu}-\bm\mu}
        =(\rho_b\eta)^{\abs{\widetilde{\bm\nu}-\bm\mu}}
        \bm\beta^{\widetilde{\bm\nu}-\bm\mu},
    \end{equation*}
    we obtain
    \begin{align*}
        &\norm{\partial_{\bm y}^{\widetilde{\bm\nu}}
            u_\eps(\cdot;\bm y)}_{1,\eps,D}\\
        &\quad\leq
        B\norm{f}_{-1,\eps,D}(BC_b)\rho_b^{n+1}
        \bm\beta^{\widetilde{\bm\nu}}\\
        &\qquad{}\times
        \sum_{\bm0<\bm\mu\leq\widetilde{\bm\nu}}
        \binom{\widetilde{\bm\nu}}{\bm\mu}
        (\abs{\bm\mu}!)^{\mfs_{y}}
        (\abs{\widetilde{\bm\nu}-\bm\mu}!)^{\mfs_{y}}
        \eta^{\abs{\widetilde{\bm\nu}-\bm\mu}}.
    \end{align*}
    The convolution estimate \eqref{eq:gevrey-convolution}, applied with
    $\abs{\widetilde{\bm\nu}}=n+1$, yields
    \begin{align*}
        &\norm{\partial_{\bm y}^{\widetilde{\bm\nu}}
            u_\eps(\cdot;\bm y)}_{1,\eps,D}
        \leq
        B\norm{f}_{-1,\eps,D}
        \frac{BC_b}{\eta-1}\rho_b^{n+1}
        \bm\beta^{\widetilde{\bm\nu}}
        ((n+1)!)^{\mfs_{y}}\eta^{n+1}.
    \end{align*}
    By the definition $\eta-1=BC_b$, the prefactor equals one. Therefore,
    \begin{align*}
        \norm{\partial_{\bm y}^{\widetilde{\bm\nu}}
            u_\eps(\cdot;\bm y)}_{1,\eps,D}
        &\leq
        B\norm{f}_{-1,\eps,D}
        ((n+1)!)^{\mfs_{y}}(\rho_b\eta)^{n+1}
        \bm\beta^{\widetilde{\bm\nu}}
        \\
        &=
        B\norm{f}_{-1,\eps,D}
        (\abs{\widetilde{\bm\nu}}!)^{\mfs_{y}}
        \widehat{\bm\beta}^{\widetilde{\bm\nu}}.
    \end{align*}
    This proves the induction step. Since the estimate is uniform in
    $\bm y\in U$, taking the supremum over $U$ gives the first inequality in
    \eqref{eq:gevrey-solution-bound}. The second follows from
    \eqref{eq:L2-dual-embedding}. All constants appearing in the proof are the
    $\eps$-independent constants from \eqref{eq:uniform-bounds},
    \eqref{eq:gevrey-b-bound}, and \eqref{eq:B-constant}.
\end{proof}
\begin{remark}[Uniformity with respect to $\eps$]
    \Cref{thm:gevrey-regularity} and, in particular, \cref{eq:solution-gevrey-membership} show that
    $u_\eps\in\cG_{\bm\beta}^{\mfs_y}(U;V_\eps)$ uniformly in $\eps$,
    in the sense that the same anisotropy sequence $\bm\beta$ and the same
    Gevrey constants $C_u$ and $\rho_u$ may be used for every $0 < \eps\leq 1$,
    provided $V_\eps$ be
    equipped with the norm $\norm{\cdot}_{1,\eps,D}$.
\end{remark}
\begin{remark}[Argument via the coefficient-to-solution map]
    The linearity of \eqref{eq:model-pde} in the unknown provides an alternative
    route to the parametric regularity result. 
    For fixed $f\in V_\eps'$ and for fixed $\eps\in(0,1]$, 
    consider the coefficient-to-solution map
    \begin{equation*}
        \mathcal S_\eps:c\longmapsto \mathcal S_\eps(c)\in V_\eps,
    \end{equation*}
    defined on the open subset of $L^\infty(D)$ consisting of uniformly positive
    reaction coefficients $c$. 
    Since the operator associated with
    \eqref{eq:bilinear-form} depends affinely on $c\in L^\infty(D)$, 
    its inverse at every $c=b(\cdot;\bm y)$ 
    admits an $\eps$-uniform bound $B$.
    A Neumann-series argument then shows
    that $\mathcal S_\eps$ is analytic on the uniform $L^{\infty}(D)$-neighborhood 
    \begin{equation*}
        \{b(\cdot;\bm y) + h : \bm{y} \in U,\, h \in L^\infty(D),\, B \norm{h}_{L^\infty(D)} < 1/2\},
    \end{equation*}
    where the inverses are bounded by $2B$, independently of $\eps$. 
    For fixed $f\in L^2(D)$, the
    parameter-to-solution map may therefore be written as the composition
    \begin{equation*}
        \bm y\longmapsto u_\eps(\cdot;\bm y)
        =\mathcal S_\eps\bigl(b(\cdot;\bm y)\bigr).
    \end{equation*}
    On each finite-dimensional parameter section, the composition of an analytic
    map with an $\mfs_{y}$-Gevrey map is again $\mfs_{y}$-Gevrey, with the same Gevrey
    index $\mfs_{y}$. This yields an alternative proof of the qualitative regularity assertion in
    \cref{thm:gevrey-regularity}. The direct inductive proof above is retained
    because it provides the explicit, quantitative
    anisotropic derivative bounds in \eqref{eq:gevrey-solution-bound}.
\end{remark}
\begin{corollary}[Preservation of anisotropic summability]
    \label{cor:summability}
    If $\bm\beta\in\ell^p(\bN)$ for some $p>0$, then
    $\widehat{\bm\beta}\in\ell^p(\bN)$ and
    \begin{equation*}
        \norm{\widehat{\bm\beta}}_{\ell^p(\bN)}
        =\rho_b\eta\norm{\bm\beta}_{\ell^p(\bN)}.
    \end{equation*}
    The factor $\rho_b\eta$ is independent of $\eps$. Thus, the passage from the
    coefficient to the solution does not change the summability exponent relevant
    for dimension truncation and QMC integration.
\end{corollary}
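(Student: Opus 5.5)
The corollary follows directly from the definition \eqref{eq:solution-weights}: the weights satisfy $\widehat\beta_j=\rho_b\eta\beta_j$ for every $j\geq1$, where the factor $\rho_b\eta$ does not depend on $j$. The plan is to compute the $\ell^p$ quasi-norm of $\widehat{\bm\beta}$ entrywise and then check that the scaling factor is independent of $\eps$.

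First, since $\beta_j\geq0$ and $\rho_b\eta>0$, we have $\abs{\widehat\beta_j}^p=(\rho_b\eta)^p\abs{\beta_j}^p$ for every $j$. Summing over $j\geq1$ gives
\begin{equation*}
    \sum_{j\geq1}\abs{\widehat\beta_j}^p=(\rho_b\eta)^p\sum_{j\geq1}\abs{\beta_j}^p<\infty .
\end{equation*}
Hence $\widehat{\bm\beta}\in\ell^p(\bN)$. Taking the $p$-th root yields the stated identity, and this works for every $p>0$. For $0<p<1$ the map $\norm{\cdot}_{\ell^p}$ is only a quasi-norm, but it is still positively homogeneous of degree one, which is the only property used here.

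Second, $\eps$-independence follows from the definition $\eta=1+BC_b$. Here $B=1/\min\{1,b_0\}$ is given by \eqref{eq:B-constant}, and $C_b,\rho_b$ come from \cref{ass:gevrey-b}. All of these are independent of $\eps$ by \cref{ass:uniform-bounds,ass:gevrey-b}, so the factor $\rho_b\eta$ is independent of $\eps$ as well. Since $\widehat{\bm\beta}$ and $\bm\beta$ differ only by a positive scalar, they belong to exactly the same spaces $\ell^p(\bN)$, so the summability exponent is unchanged.

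There is no genuine obstacle in this argument. The only point that needs care is to record that the positivity $\rho_b>0$ and $\eta>1$ makes the scaling harmless, and that the homogeneity argument does not rely on the triangle inequality, so it remains valid for $p<1$.
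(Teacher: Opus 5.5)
Your proof is correct and is exactly the argument the paper leaves implicit (it states the corollary without proof). Since $\widehat\beta_j=\rho_b\eta\beta_j$ by \eqref{eq:solution-weights}, the identity follows from positive homogeneity of the $\ell^p$ quasi-norm, and $\eps$-independence follows because $B$, $C_b$ and $\rho_b$ do not depend on $\eps$.
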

\begin{remark}[Consistency with the affine case]
    If
    \begin{equation*}
        b(x;\bm y)=\overline b(x)+\sum_{j\geq1}y_j\psi_j(x),
    \end{equation*}
    where
    $\sum_{j\geq1}\norm{\psi_j}_{L^\infty(D)}<\infty$, then
    $\partial_{\bm y}^{\bm\mu}b(\cdot;\bm y)=0$ whenever
    $\abs{\bm\mu}\geq2$. Equation
    \eqref{eq:derivative-variational-equation} then reduces to the one-step
    recursion from the affine-parametric analysis and gives the sharper estimate
    \begin{equation*}
        \sup_{\bm y\in U}
        \norm{\partial_{\bm y}^{\bm\nu}u_\eps(\cdot;\bm y)}_{1,\eps,D}
        \leq B\norm{f}_{-1,\eps,D}\abs{\bm\nu}!
        \prod_{j\in\supp(\bm\nu)}
        \bigl(B\norm{\psi_j}_{L^\infty(D)}\bigr)^{\nu_j}.
    \end{equation*}
    All constants are independent of $\eps$.
\end{remark}
\section{Dimension truncation}
\label{sec:dim-trunc}
Our model for the parametric coefficient $b$ involves countably many
parameters. Although the solution possesses the required parametric
regularity, practical computations require truncating the parameter dependence
to a finite dimension $s$. In this section, we therefore establish a general
result concerning the dimension truncation of smooth functions depending on
countably many parameters.

Dimension truncation is frequently studied in the context of QMC for
PDEs with uncertain input data from function spaces. 
A setting closely related to ours was considered in \cite{kuo2012QMC}. For
affine-parametric operator equations, an improved convergence rate was later
established in \cite{gantner2018dimension} by means of a Neumann series
argument. Since that approach relies on the affine structure, we instead use
the Taylor expansion argument from
\cite{guth2022application,guth2024lognormal}.
On $U = [-1/2,1/2]^{\bN}$, denote by
\begin{equation*}
    \bm\pi(\dif\bm y)\coloneqq\bigotimes_{j\geq1}\dif y_j
\end{equation*}
the uniform product probability measure on $U$. Let $X$ be a separable Banach
space and let $g:U\to X$. For
$\bm y=(y_j)_{j\geq1}\in U$ and $s\in\bN$, write
\begin{equation*}
    \bm y_{\leq s}\coloneqq(y_1,\ldots,y_s),
    \qquad
    \bm y_{>s}\coloneqq(y_{s+1},y_{s+2},\ldots).
\end{equation*}
For a tail sequence $\bm z_{>s}=(z_{s+1},z_{s+2},\ldots)$, we use the
concatenation notation
\begin{equation*}
    (\bm y_{\leq s},\bm z_{>s})
    \coloneqq(y_1,\ldots,y_s,z_{s+1},z_{s+2},\ldots).
\end{equation*}
In particular,
\begin{equation*}
    (\bm y_{\leq s},\bm0)=(y_1,\ldots,y_s,0,0,\ldots),
    \qquad
    g_s(\bm y)\coloneqq g(\bm y_{\leq s},\bm0).
\end{equation*}

We work under the following assumptions:
\begin{enumerate}[label=\textbf{D.\arabic*}]
    \item
    \label{item:anchored-convergence}
    For $\bm\pi$-a.e. $\bm y\in U$,
    \begin{equation*}
        \norm{g(\bm y)-g_s(\bm y)}_X\longrightarrow0
        \qquad\text{as }s\to\infty.
    \end{equation*}
    
    \item
    \label{item:gevrey-truncation-regularity}
    There exist $0<p<1$ and a nonnegative, non-increasing sequence
    $\bm\beta=(\beta_j)_{j\geq1}\in\ell^p(\bN)$ such that
    \begin{equation*}
        g\in\cG_{\bm\beta}^{\mfs_y}(U;X).
    \end{equation*}
    We fix constants $C_g,\rho_g>0$ for which
    \begin{equation}
        \label{eq:abstract-gevrey-truncation-bound}
        \sup_{\bm y\in U}
        \norm{\partial_{\bm y}^{\bm\nu}g(\bm y)}_X
        \leq C_g(\abs{\bm\nu}!)^{\mfs_{y}}
        \rho_g^{\abs{\bm\nu}}\bm\beta^{\bm\nu}
        \qquad\forall\bm\nu\in\cF.
    \end{equation}
\end{enumerate}

\begin{proposition}[Dimension truncation for $\mfs_{y}$-Gevrey maps]
    \label{prop:abstract-dimension-truncation}
    Suppose that \cref{item:anchored-convergence,item:gevrey-truncation-regularity}
    hold. Then
    \begin{equation}
        \label{eq:abstract-dimension-truncation-bound}
        \norm*{\int_U\bigl(g(\bm y)-g_s(\bm y)\bigr)
            \,\bm\pi(\dif\bm y)}_X
        \leq C_{\mathrm{DT}}s^{-2/p+1},
    \end{equation}
    where $C_{\mathrm{DT}}>0$ is independent of $s$. 
    Moreover, for every $G\in X'$,
    \begin{equation}
        \label{eq:abstract-functional-dimension-truncation-bound}
        \abs*{\int_U G\bigl(g(\bm y)-g_s(\bm y)\bigr)
            \,\bm\pi(\dif\bm y)}
        \leq C_{\mathrm{DT}}\norm{G}_{X'}s^{-2/p+1}.
    \end{equation}
    The constant depends on $C_g$, $\rho_g$, $p$, $\bm\beta$, and the Gevrey index
    $\mfs_{y}$, but not on the truncation dimension $s$.
\end{proposition}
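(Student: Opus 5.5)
The plan is the Taylor-expansion argument of \cite{guth2022application,guth2024lognormal}, adapted to Gevrey bounds. By \cref{item:anchored-convergence}, dominated convergence (the integrand is uniformly bounded by $2C_g$) and continuity of $G$, it suffices to prove the bound for the functional version \eqref{eq:abstract-functional-dimension-truncation-bound} with $g$ replaced by $g_{s'}$, $s'>s$, uniformly in $s'$, and then let $s'\to\infty$. The Bochner estimate \eqref{eq:abstract-dimension-truncation-bound} follows by taking the supremum over $\norm{G}_{X'}\le1$ (Hahn--Banach). So fix $G$, set $F(\bm y):=G(g(\bm y))$, and work with the finite-dimensional function $\bm y_{>s}\mapsto F(\bm y_{\le s},\bm y_{>s})$, where only finitely many tail coordinates $s<j\le s'$ are active.

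Fix $\bm y_{\le s}$ and Taylor-expand in the tail variables around $\bm 0$ up to order $k$, where $k\ge1$ is an integer chosen below depending only on $p$ (e.g.\ $k=\lceil 1/(1-p)\rceil$ or larger). This gives
\begin{equation*}
F(\bm y)-F(\bm y_{\le s},\bm0)=\sum_{1\le\abs{\bm\nu}\le k}\frac{\bm y_{>s}^{\bm\nu}}{\bm\nu!}\partial^{\bm\nu}F(\bm y_{\le s},\bm0)
+\sum_{\abs{\bm\nu}=k+1}\frac{k+1}{\bm\nu!}\bm y_{>s}^{\bm\nu}\int_0^1(1-t)^{k}\partial^{\bm\nu}F(\bm y_{\le s},t\bm y_{>s})\dif t,
\end{equation*}
with $\bm\nu$ supported in $\{j>s\}$. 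Integrate against $\bm\pi$. Since each $y_j$ is centered and symmetric, every term of the finite sum with some $\nu_j=1$ vanishes. Hence only multi-indices with all $\nu_j\ge2$ (or $\nu_j=0$) survive among the orders $2\le\abs{\bm\nu}\le k$. Bounding these surviving terms with \eqref{eq:abstract-gevrey-truncation-bound} gives at most
\begin{equation*}
C_g\norm{G}_{X'}\sum_{\substack{2\le\abs{\bm\nu}\le k\\ \nu_j\ne1}}\frac{(\abs{\bm\nu}!)^{\mfs_y}\rho_g^{\abs{\bm\nu}}}{2^{\abs{\bm\nu}}\bm\nu!}\bm\beta^{\bm\nu}.
\end{equation*}
The Gevrey factor $(\abs{\bm\nu}!)^{\mfs_y}\le (k!)^{\mfs_y}$ is a harmless constant here, because $\abs{\bm\nu}\le k$ is fixed. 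This is exactly why the Gevrey case is no harder than the analytic one.

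The tail sums are then estimated. Using that $\bm\beta$ is non-increasing, the standard bound $\sum_{j>s}\beta_j^2\le \norm{\bm\beta}_{\ell^p}^2 s^{-2/p+1}$ holds (Stechkin-type, since $\beta_j\le j^{-1/p}\norm{\bm\beta}_{\ell^p}$). Products $\prod_{j\in\supp\bm\nu}\beta_j^{\nu_j}$ with all $\nu_j\ge2$ are bounded by products of the factors $\sum_{j>s}\beta_j^2$ and sums of higher powers. Each such factor is $\lesssim s^{-2/p+1}$, so every surviving term is $\cO(s^{-2/p+1})$. The remainder term is bounded by
\begin{equation*}
C_g\norm{G}_{X'}((k+1)!)^{\mfs_y}\rho_g^{k+1}\sum_{\abs{\bm\nu}=k+1}\frac{\bm\beta_{>s}^{\bm\nu}}{\bm\nu!}\le C\Bigl(\sum_{j>s}\beta_j\Bigr)^{k+1},
\end{equation*}
which is $\cO(s^{(k+1)(1-1/p)})$. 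Choosing $k$ with $(k+1)(1/p-1)\ge 2/p-1$ makes the remainder also $\cO(s^{-2/p+1})$.

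The main obstacle is bookkeeping rather than depth. One has to show that the constants in the finite sum stay independent of $s$ and $s'$: the number of multi-indices of order at most $k$ supported in the tail is infinite, so one must sum $\bm\beta^{\bm\nu}/\bm\nu!$ over them via multinomial-type bounds like $\sum_{\abs{\bm\nu}=m}\bm\beta^{\bm\nu}/\bm\nu!=(\sum_j\beta_j)^m/m!$, restricted to $\nu_j\ne1$. Making this restriction yield the squared tail $\sum_{j>s}\beta_j^2$ (and not merely $\sum_{j>s}\beta_j$) requires grouping by support and bounding $\sum_{j>s}\beta_j^{m}\le\beta_{s+1}^{m-2}\sum_{j>s}\beta_j^2$. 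That careful grouping is the step I would write out explicitly.
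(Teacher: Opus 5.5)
Your proposal is correct and follows essentially the same route as the paper: Taylor expansion in the tail variables to order $k=\lceil 1/(1-p)\rceil$, centeredness to remove every term with some $\nu_j=1$, Stechkin's inequality for $\sum_{j>s}\beta_j^2$ and for $\sum_{j>s}\beta_j$, and this choice of $k$ so that the remainder is also $\mathcal{O}(s^{-2/p+1})$. There are two minor differences: the paper bounds the restricted tail sum in one stroke by $\prod_{j>s}\bigl(1+\sum_{r=2}^k\beta_j^r/r!\bigr)-1\le\exp\bigl(e^{\beta_1}\sum_{j>s}\beta_j^2\bigr)-1$ plus convexity of the exponential, where you group by support; and the paper Taylor-expands directly in infinitely many tail variables, whereas your reduction to $g_{s'}$ via D.1 and dominated convergence justifies that step more carefully.
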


\begin{proof}
    We adapt the Taylor argument in
    \cite{guth2022application,guth2024lognormal}. Set
    \begin{equation*}
        k\coloneqq\ceil{\frac{1}{1-p}}
    \end{equation*}
    and fix $G\in X'$. For
    \begin{equation*}
        F_G(\bm y)\coloneqq G(g(\bm y)),
    \end{equation*}
    \cref{item:gevrey-truncation-regularity} gives
    \begin{equation}
        \label{eq:scalar-gevrey-bound}
        \abs{\partial_{\bm y}^{\bm\nu}F_G(\bm y)}
        \leq C_g\norm{G}_{X'}(\abs{\bm\nu}!)^{\mfs_{y}}
        \rho_g^{\abs{\bm\nu}}\bm\beta^{\bm\nu},
        \qquad \bm\nu\in\cF,
    \end{equation}
    uniformly in $\bm y\in U$. Taylor expansion about
    $(\bm y_{\leq s},\bm0)$, with integral remainder, yields
    \begin{equation}
        \label{eq:scalar-tail-taylor}
        \begin{split}
            F_G(\bm y)-F_G(\bm y_{\leq s},\bm0)
            ={}&\sum_{\ell=1}^k
            \sum_{\substack{\abs{\bm\nu}=\ell\\
                    \nu_j=0,\ j\leq s}}
            \frac{\bm y^{\bm\nu}}{\bm\nu!}
            \partial_{\bm y}^{\bm\nu}F_G(\bm y_{\leq s},\bm0)
            \\
            &+(k+1)
            \sum_{\substack{\abs{\bm\nu}=k+1\\
                    \nu_j=0,\ j\leq s}}
            \frac{\bm y^{\bm\nu}}{\bm\nu!}
            \int_0^1(1-t)^k
            \partial_{\bm y}^{\bm\nu}F_G
            (\bm y_{\leq s},t\bm y_{>s})\,\dif t.
        \end{split}
    \end{equation}
    After integration over $U$, denote the Taylor polynomial and the integral
    remainder in \eqref{eq:scalar-tail-taylor} by $I_s$ and $II_s$,
    respectively.
    
    The derivative in every summand of $I_s$ is independent of the tail
    variables. Since the uniform measure is centered, Fubini's theorem shows that
    a summand vanishes whenever $\nu_j=1$ for some $j>s$. In particular, all
    first-order terms vanish. 
    Using \eqref{eq:scalar-gevrey-bound} and
    $\abs{y_j}\leq1/2$, we obtain 
    \begin{equation}
        \label{eq:I-s-first-bound}
        \abs{I_s}
        \leq{} C_g \left(\max_{2 \le \ell \le k} (\ell!)^{\mfs_{y}} (\rho_g/2)^{\ell}\right)\norm{G}_{X'}
        \sum_{\ell=2}^k
        \sum_{\substack{\abs{\bm\nu}=\ell\\
                \nu_j=0,\ j\leq s\\
                \nu_j\neq1,\ j>s}}
        \frac{\bm\beta^{\bm\nu}}{\bm\nu!}.
    \end{equation}
    The remaining sum satisfies
    \begin{equation*}
        \sum_{\ell=2}^k
        \sum_{\substack{\abs{\bm\nu}=\ell\\
                \nu_j=0,\ j\leq s\\
                \nu_j\neq1,\ j>s}}
        \frac{\bm\beta^{\bm\nu}}{\bm\nu!}
        \leq
        -1+\prod_{j>s}\left(1+\sum_{r=2}^k\frac{\beta_j^r}{r!}\right)
        \leq
        -1+\exp\left(e^{\beta_1}\sum_{j>s}\beta_j^2\right).
    \end{equation*}
    Here we used the monotonicity of $\bm \beta$,
    \begin{equation*}
        \sum_{r=2}^k\frac{\beta_j^r}{r!}
        \leq \beta_j^2\sum_{r=0}^{k-2}\frac{\beta_1^r}{r!}
        \leq e^{\beta_1}\beta_j^2,
    \end{equation*}
    and $\log(1+x)\leq x$. By Stechkin's inequality,
    \begin{equation*}
        \sum_{j>s}\beta_j^2
        \leq\norm{\bm\beta}_{\ell^p(\bN)}^2s^{-2/p+1}.
    \end{equation*}
    Since $s^{-2/p+1}\in(0,1]$, convexity of the exponential gives 
    \begin{equation}
        \label{eq:I-s-final-bound}
        \abs{I_s}\leq C_I\norm{G}_{X'}s^{-2/p+1},
    \end{equation}
    where $C_I$ is independent of $s$ and $G$.
    
    For $II_s$, no centeredness argument is available because the derivative in
    the remainder depends on the tail variables. Since
    $\int_0^1(1-t)^k\,\dif t=1/(k+1)$, the prefactor $k+1$ cancels, and
    \eqref{eq:scalar-gevrey-bound} yields
    \begin{align*}
        \abs{II_s}
        &\leq C_g\norm{G}_{X'}((k+1)!)^{\mfs_{y}}
        \left(\frac{\rho_g}{2}\right)^{k+1}
        \sum_{\substack{\abs{\bm\nu}=k+1\\
                \nu_j=0,\ j\leq s}}
        \frac{\bm\beta^{\bm\nu}}{\bm\nu!}\\
        &= C_g\norm{G}_{X'}((k+1)!)^{\mfs_{y}-1}
        \left(\frac{\rho_g}{2}\right)^{k+1}
        \left(\sum_{j>s}\beta_j\right)^{k+1}.
    \end{align*}
    In the last step, we used the exact multinomial identity
    \begin{equation*}
        \sum_{\substack{\abs{\bm\nu}=k+1\\
                \nu_j=0,\ j\leq s}}
        \frac{\bm\beta^{\bm\nu}}{\bm\nu!}
        =\frac{1}{(k+1)!}\left(\sum_{j>s}\beta_j\right)^{k+1}.
    \end{equation*}
    Another application of Stechkin's inequality gives
    \begin{equation}
        \label{eq:II-s-final-bound}
        \abs{II_s}
        \leq C_{II}\norm{G}_{X'}
        s^{(k+1)(-1/p+1)},
    \end{equation}
    where $C_{II}$ is independent of $s$ and $G$. The choice of $k$ ensures that
    \begin{equation*}
        (k+1)\left(\frac1p-1\right)\geq\frac2p-1.
    \end{equation*}
    Combining \eqref{eq:I-s-final-bound} and \eqref{eq:II-s-final-bound} therefore
    proves \eqref{eq:abstract-functional-dimension-truncation-bound}. Taking the
    supremum over all $G\in X'$ with $\norm{G}_{X'}\leq1$ proves
    \eqref{eq:abstract-dimension-truncation-bound}.
\end{proof}

\begin{remark}
    The proof shows that $C_{\mathrm{DT}}$ depends on $g$ only through the
    derivative-bound constants in \cref{item:gevrey-truncation-regularity} and on
    the sequence $\bm\beta$. Thus, if $g=g_\eps$ and these quantities are independent
    of $\eps$, then so is $C_{\mathrm{DT}}$.
\end{remark}

We now apply \cref{prop:abstract-dimension-truncation} to
$g(\bm y)=u_\eps(\cdot;\bm y)$ with $X=V_\eps$ and norm
$\norm{\cdot}_{1,\eps,D}$. 

\begin{assumption}[Summability and ordering for dimension truncation]
    \label{ass:dimension-truncation-weights}
    For some $0<p<1$, the sequence from \cref{ass:gevrey-b} satisfies
    \begin{equation*}
        \bm\beta\in\ell^p(\bN),
        \qquad
        \beta_1\geq\beta_2\geq\cdots\geq0.
    \end{equation*}
\end{assumption}

For $s\in\bN$, define
\begin{equation}
    \label{eq:dimension-truncated-solution}
    u_{\eps,s}(\cdot;\bm y)
    \coloneqq u_\eps(\cdot;(\bm y_{\leq s},\bm0)).
\end{equation}
Equivalently, $u_{\eps,s}$ solves \eqref{eq:weak-problem} with reaction
coefficient $b(\cdot;(\bm y_{\leq s},\bm0))$.

\begin{corollary}[$\eps$-robust dimension truncation]
    \label{cor:dimension-truncation}
    Suppose that
    \cref{ass:uniform-bounds,ass:gevrey-b,ass:dimension-truncation-weights} hold
    and let $f\in L^2(D)$. Then, for every $\eps\in(0,1]$ and $s\in\bN$,
    \begin{equation}
        \label{eq:solution-dimension-truncation}
        \norm*{\int_U\bigl(u_\eps(\cdot;\bm y)
            -u_{\eps,s}(\cdot;\bm y)\bigr)\,\bm\pi(\dif\bm y)}_{1,\eps,D}
        \leq C_{\mathrm{DT}}s^{-2/p+1},
    \end{equation}
    where $C_{\mathrm{DT}}>0$ is independent of $s$ and $\eps$. Moreover, for
    every $G\in(V_\eps)'$,
    \begin{equation}
        \label{eq:functional-dimension-truncation}
        \abs*{\int_U G\bigl(u_\eps(\cdot;\bm y)
            -u_{\eps,s}(\cdot;\bm y)\bigr)\,\bm\pi(\dif\bm y)}
        \leq C_{\mathrm{DT}}\norm{G}_{(V_\eps)'}s^{-2/p+1},
    \end{equation}
    with the same constant as in \eqref{eq:solution-dimension-truncation}.
\end{corollary}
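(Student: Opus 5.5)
The plan is to apply \cref{prop:abstract-dimension-truncation} with $X=V_\eps$, normed by $\norm{\cdot}_{1,\eps,D}$, and $g(\bm y)=u_\eps(\cdot;\bm y)$. Then $g_s=u_{\eps,s}$ by \eqref{eq:dimension-truncated-solution}, and $X'=(V_\eps)'$ carries the dual norm \eqref{eq:dual-norm}. The space $V_\eps$ is separable, since it is $H_0^1(D)$ with an equivalent norm. Two things must be checked: hypotheses \cref{item:anchored-convergence,item:gevrey-truncation-regularity}, and that the resulting constant $C_{\mathrm{DT}}$ does not depend on $\eps$.

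\textbf{Hypothesis \cref{item:gevrey-truncation-regularity}.} This comes directly from \cref{thm:gevrey-regularity}. By \eqref{eq:gevrey-solution-bound}, the bound \eqref{eq:abstract-gevrey-truncation-bound} holds with
\begin{equation*}
    C_g=B\norm{f}_{L^2(D)},\qquad \rho_g=\rho_b\eta,
\end{equation*}
and with the same sequence $\bm\beta$. \Cref{ass:dimension-truncation-weights} supplies $\bm\beta\in\ell^p(\bN)$ with $0<p<1$, nonnegative and non-increasing, as \cref{item:gevrey-truncation-regularity} requires. The remaining qualitative requirements of the class $\cG^{\mfs_y}_{\bm\beta}(U;V_\eps)$ are given by \cref{prop:derivative-equation}: continuity in $\bm y$ and existence of continuous mixed derivatives on finite sections. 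One point needs care. The definition of the class asks for smooth extensions to a neighborhood of the finite-dimensional cube. These are obtained by extending $b$ as in \cref{ass:gevrey-b} and solving the problem for the extended coefficient. If positivity of $b$ could fail slightly outside $U$, one can instead note that the Taylor argument only evaluates derivatives at points of $U$, with one-sided derivatives at the boundary, so the bounds on $U$ suffice.

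\textbf{Hypothesis \cref{item:anchored-convergence}.} The stability estimate \eqref{eq:coefficient-stability} gives, for every $\bm y\in U$,
\begin{equation*}
    \norm{u_\eps(\cdot;\bm y)-u_{\eps,s}(\cdot;\bm y)}_{1,\eps,D}\le B^2\norm{f}_{L^2(D)}\,\norm{b(\cdot;\bm y)-b(\cdot;(\bm y_{\le s},\bm 0))}_{L^\infty(D)}.
\end{equation*}
It therefore suffices to show that $b(\cdot;(\bm y_{\le s},\bm0))\to b(\cdot;\bm y)$ in $L^\infty(D)$. I would argue this through a telescoping / mean-value estimate along the tail coordinates. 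Using the first-order bound in \eqref{eq:gevrey-b-bound}, the difference is bounded by
\begin{equation*}
    C_b\rho_b\sum_{j>s}\abs{y_j}\beta_j\le \tfrac12 C_b\rho_b\sum_{j>s}\beta_j .
\end{equation*}
This tends to $0$ because $\bm\beta\in\ell^p\subset\ell^1$. The step is rigorous on finite sections. The passage to the infinite limit uses the continuity of $b$ on $U$ in the product topology, under which $(\bm y_{\le s},\bm 0)\to\bm y$. In fact that continuity alone, applied to $b$ or directly to $u_\eps$, already gives pointwise convergence, and I would simply invoke it. This is the step I expect to be the main obstacle: it is the only place where something beyond the earlier results, namely the product-topology continuity of $b$ into $L^\infty(D)$, is really used.

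\textbf{$\eps$-independence.} The remark after \cref{prop:abstract-dimension-truncation} states that $C_{\mathrm{DT}}$ depends only on $C_g$, $\rho_g$, $p$, $\bm\beta$ and $\mfs_y$. All of these are $\eps$-independent by \cref{thm:gevrey-regularity}. Hence \eqref{eq:abstract-dimension-truncation-bound} and \eqref{eq:abstract-functional-dimension-truncation-bound} yield \eqref{eq:solution-dimension-truncation} and \eqref{eq:functional-dimension-truncation} with the same $\eps$-independent constant.
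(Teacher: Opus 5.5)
Your proposal is correct and follows the paper's proof essentially step by step. The paper obtains \cref{item:gevrey-truncation-regularity} from \cref{thm:gevrey-regularity} with $C_g=B\norm{f}_{L^2(D)}$ and $\rho_g=\rho_b\eta$. It verifies \cref{item:anchored-convergence} via \eqref{eq:coefficient-stability}, using the first-order bound $\tfrac12 C_b\rho_b\sum_{j>s}\beta_j$ on finite sections and product-topology continuity to pass to the limit, and then invokes \cref{prop:abstract-dimension-truncation} to get the $\eps$-independent constant. Your side remarks are harmless refinements: the separability of $V_\eps$, and the observation that pointwise convergence already suffices for \cref{item:anchored-convergence} (the paper records the stronger $\eps$-uniform supremum bound).
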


\begin{proof}
    For each fixed $\eps\in(0,1]$, \cref{prop:derivative-equation} gives the
    required smoothness of the map
    $\bm y\mapsto u_\eps(\cdot;\bm y)$. We first verify
    \cref{item:anchored-convergence}. For $m>s$, the fundamental theorem of
    calculus on the segment from $(\bm y_{\leq s},\bm0)$ to
    $(\bm y_{\leq m},\bm0)$ and \eqref{eq:gevrey-b-bound} at first order give
    \begin{equation*}
        \norm{b(\cdot;(\bm y_{\leq m},\bm0))
            -b(\cdot;(\bm y_{\leq s},\bm0))}_{L^\infty(D)}
        \leq\frac{C_b\rho_b}{2}\sum_{j=s+1}^m\beta_j.
    \end{equation*}
    Since $p<1$, $\bm\beta\in\ell^1(\bN)$. Moreover,
    $(\bm y_{\leq m},\bm0)\to\bm y$ in the product topology, so the continuity in
    \cref{ass:gevrey-b} permits passage to the limit $m\to\infty$. Hence
    \eqref{eq:coefficient-stability} and \eqref{eq:L2-dual-embedding} imply
    \begin{equation*}
        \sup_{\bm y\in U}
        \norm{u_\eps(\cdot;\bm y)-u_{\eps,s}(\cdot;\bm y)}_{1,\eps,D}
        \leq
        \frac{B^2C_b\rho_b}{2}\norm{f}_{L^2(D)}
        \sum_{j>s}\beta_j\longrightarrow0.
    \end{equation*}
    Thus \cref{item:anchored-convergence} holds, uniformly in $\eps$.
    
    Next, \cref{thm:gevrey-regularity} gives, for every $\bm\nu\in\cF$,
    \begin{equation*}
        \sup_{\bm y\in U}
        \norm{\partial_{\bm y}^{\bm\nu}u_\eps(\cdot;\bm y)}_{1,\eps,D}
        \leq B\norm{f}_{L^2(D)}
        (\abs{\bm\nu}!)^{\mfs_{y}}
        (\rho_b\eta)^{\abs{\bm\nu}}\bm\beta^{\bm\nu}.
    \end{equation*}
    Therefore \cref{item:gevrey-truncation-regularity} holds with
    \begin{equation*}
        C_g=B\norm{f}_{L^2(D)},
        \qquad \rho_g=\rho_b\eta,
        \qquad \bm\beta=(\beta_j)_{j\geq1}.
    \end{equation*}
    These quantities are independent of $\eps$. The assertions now follow from
    \cref{prop:abstract-dimension-truncation}.
\end{proof}

\section{Higher-order Quasi--Monte Carlo}
\label{sec:qmc}

This section recalls the higher-order quasi--Monte Carlo (QMC) result used for
the dimension-truncated parametric problem. We consider the numerical
approximation of integrals of functions $F$ defined on the $s$-dimensional unit
cube $[0,1]^s$,
\begin{equation*}
    I_s(F)\coloneqq\int_{[0,1]^s}F(\bm y)\,\dif\bm y,
\end{equation*}
by QMC quadrature rules
\begin{equation*}
    Q_{N,s}(F)\coloneqq\frac1N\sum_{n=0}^{N-1}F(\bm y_n),
\end{equation*}
where the quadrature points $\bm y_0,\ldots,\bm y_{N-1}\in[0,1]^s$ are
chosen according to a deterministic lattice rule belonging to the family of
digital nets. In the application to the parametric reaction--diffusion
problem, these points are translated from $[0,1]^s$ to
$[-1/2,1/2]^s$.

\subsection{Weighted spaces and SPOD weights}

We use the higher-order unanchored Sobolev spaces from
\cite{dick2014higher} in the case $r=\infty$, which is the only case needed
below.\footnote{We follow the definition in \cite{dick2016multilevel}, which
    corrects a typographical error in the corresponding definition of
    \cite{dick2014higher}.} For $k,\ell \in \bN_{0}$ with $k \le \ell$, set
$\{k:\ell\}\coloneqq\{k,k+1,\ldots,\ell\}$. If
$\mfu\subseteq\{1:s\}$, $\mfv\subseteq\mfu$, and
$\bm\tau_{\mfu\setminus\mfv}\in
\{1:\alpha\}^{\abs{\mfu\setminus\mfv}}$, then
\begin{equation*}
    (\bm\alpha_{\mfv},\bm\tau_{\mfu\setminus\mfv},\bm0)
\end{equation*}
denotes the multi-index $\bm\nu\in\{0,\ldots,\alpha\}^s$ with
$\nu_j=\alpha$ for $j\in\mfv$, $\nu_j=\tau_j$ for
$j\in\mfu\setminus\mfv$, and $\nu_j=0$ for $j\notin\mfu$.

\begin{definition}[Higher-order unanchored Sobolev norm]
    Let $\alpha,s\in\bN$, $1\leq q\leq\infty$, and
    $1\leq r\leq\infty$, and let
    $\bgg=(\gamma_{\mfu})_{\mfu\subset\bN}$ be a collection of
    nonnegative real weights indexed by the finite subsets of $\bN$. 
    Suppose
    that $F:[0,1]^s\to\bR$ has partial derivatives of orders up to $\alpha$
    with respect to each variable. Define
    \begin{align*}
        \norm{F}_{s,\alpha,\bgg,q,r}
        \coloneqq{}&
        \left[
        \sum_{\mfu\subseteq\{1:s\}}
        \left(
        \frac{1}{\gamma_{\mfu}^{q}}
        \sum_{\mfv\subseteq\mfu}
        \sum_{\bm\tau_{\mfu\setminus\mfv}
            \in\{1:\alpha\}^{\abs{\mfu\setminus\mfv}}}
        \right.
        \right.\\
        &\left.
        \left.
        \int_{[0,1]^{\abs{\mfv}}}
        \abs*{
            \int_{[0,1]^{s-\abs{\mfv}}}
            \left(
            \partial_{\bm y}^{
                (\bm\alpha_{\mfv},
                \bm\tau_{\mfu\setminus\mfv},\bm0)}F
            \right)(\bm y)
            \dif\bm y_{\{1:s\}\setminus\mfv}
        }^{q}
        \dif\bm y_{\mfv}
        \right)^{r/q}
        \right]^{1/r},
    \end{align*}
    with the usual modifications if $q$ or $r$ is infinite. Let
    $\mathcal W_{s,\alpha,\bgg,q,r}$ denote the Banach space of all such
    functions with finite norm. We exclusively use $r=\infty$ below.
\end{definition}

We use this norm for integrands satisfying the Gevrey derivative bound
\begin{equation}
    \label{eq:ass-QMC-integrand}
    \abs{\partial_{\bm y}^{\bm\nu}F(\bm y)}
    \leq
    C_1(\abs{\bm\nu}!)^{\mfs_{y}}
    C_2^{\abs{\bm\nu}}\bm\beta^{\bm\nu},
    \qquad
    \bm\nu\in\{0,1,\ldots,\alpha\}^s,
    \quad \bm y\in[0,1]^s,
\end{equation}
where $C_1,C_2>0$, $\mfs_{y}\geq1$, and
$\bm\beta=(\beta_j)_{j\geq1}\in\ell^p(\bN)$ for some
$0<p\leq1/\mfs_{y}$. Coordinates with $\beta_j=0$ are inactive by
\eqref{eq:ass-QMC-integrand} and may be omitted, so we assume without loss of
generality that the active weights are positive.

\subsubsection*{SPOD weights}

We now choose weights $\bgg$ such that an integrand satisfying
\eqref{eq:ass-QMC-integrand} belongs to
$\mathcal W_{s,\alpha,\bgg,q,\infty}$ with a norm bound independent of
$s$. For $1\leq q<\infty$, the definition of the norm, the fact that all
integration domains have measure one, and \eqref{eq:ass-QMC-integrand} give
\begin{align*}
    \norm{F}_{s,\alpha,\bgg,q,\infty}
    &\leq
    C_1\max_{\mfu\subseteq\{1:s\}}\gamma_{\mfu}^{-1}
    \left[
    \sum_{\mfv\subseteq\mfu}
    \sum_{\bm\tau_{\mfu\setminus\mfv}
        \in\{1:\alpha\}^{\abs{\mfu\setminus\mfv}}} \left(
    \abs*{(\bm\alpha_{\mfv},
        \bm\tau_{\mfu\setminus\mfv},\bm0)}!
    \right)^{\mfs_{y} q}
    \right.\\
    &\hspace{3.5cm}\left.
    {}\times
    C_2^{q\abs*{(\bm\alpha_{\mfv},
            \bm\tau_{\mfu\setminus\mfv},\bm0)}}
    \left(
    \bm\beta^{(\bm\alpha_{\mfv},
        \bm\tau_{\mfu\setminus\mfv},\bm0)}
    \right)^q
    \right]^{1/q}\\
    &\leq
    C_1\max_{\mfu\subseteq\{1:s\}}\gamma_{\mfu}^{-1}
    \sum_{\mfv\subseteq\mfu}
    \sum_{\bm\tau_{\mfu\setminus\mfv}
        \in\{1:\alpha\}^{\abs{\mfu\setminus\mfv}}}
    \left(
    \abs*{(\bm\alpha_{\mfv},
        \bm\tau_{\mfu\setminus\mfv},\bm0)}!
    \right)^{\mfs_{y}}\\
    &\hspace{3.5cm}\times
    C_2^{\abs*{(\bm\alpha_{\mfv},
            \bm\tau_{\mfu\setminus\mfv},\bm0)}}
    \bm\beta^{(\bm\alpha_{\mfv},
        \bm\tau_{\mfu\setminus\mfv},\bm0)}\\
    &=
    C_1\max_{\mfu\subseteq\{1:s\}}\gamma_{\mfu}^{-1}
    \sum_{\bm\nu_{\mfu}\in\{1:\alpha\}^{\abs{\mfu}}}
    C_2^{\abs{\bm\nu_{\mfu}}}
    (\abs{\bm\nu_{\mfu}}!)^{\mfs_{y}}
    \prod_{j\in\mfu}
    \left(2^{\delta(\nu_j,\alpha)}\beta_j^{\nu_j}\right).
\end{align*}
Here $\delta$ denotes the Kronecker delta. The factor
$2^{\delta(\nu_j,\alpha)}$ occurs because a derivative of order $\alpha$ in
coordinate $j$ can arise either from $j\in\mfv$ or from
$\tau_j=\alpha$ with $j\in\mfu\setminus\mfv$. For $q=\infty$, the usual
modification of the norm gives the same final bound.

We therefore define the Gevrey-SPOD weights by $\gamma_{\emptyset}=1$ and
\begin{equation}
    \label{eq:SPOD-weights}
    \gamma_{\mfu}
    \coloneqq
    \sum_{\bm\nu_{\mfu}\in\{1:\alpha\}^{\abs{\mfu}}}
    C_2^{\abs{\bm\nu_{\mfu}}}
    (\abs{\bm\nu_{\mfu}}!)^{\mfs_{y}}
    \prod_{j\in\mfu}
    \left(2^{\delta(\nu_j,\alpha)}\beta_j^{\nu_j}\right),
\end{equation}
for every nonempty finite $\mfu\subset\bN$. With this choice,
\begin{equation}
    \label{eq:gevrey-spod-norm-bound}
    \norm{F}_{s,\alpha,\bgg,q,\infty}\leq C_1,
    \qquad 1\leq q\leq\infty.
\end{equation}

\subsection{Interlaced polynomial lattice rules}

We recall the higher-order QMC rules used below. Interlaced polynomial lattice
rules were introduced in \cite{Dick2007Explicit,Dick2008Walsh} and belong to
the class of higher-order digital nets; see also \cite{dick2010digital}. We use
the notation from \cite{dick2014higher}.

Let $\mathfrak b$ be prime, let $\bZ_{\mathfrak b}$ be the finite field with
$\mathfrak b$ elements, and let $\bZ_{\mathfrak b}[x]$ denote the polynomials
over $\bZ_{\mathfrak b}$.

\begin{definition}[Polynomial lattice point set]
    For a prime $\mathfrak b$ and $m\in\bN$, let
    $P\in\bZ_{\mathfrak b}[x]$ be an irreducible polynomial with
    $\deg(P)=m$. For a dimension $s\geq1$, select
    $q_1(x),\ldots,q_s(x)$ from
    \begin{equation*}
        \cG_{\mathfrak b,m}
        \coloneqq
        \{q(x)\in\bZ_{\mathfrak b}[x]\setminus\{0\}:\deg(q)<m\},
    \end{equation*}
    and write
    \begin{equation*}
        \bm q=\bm q(x)=(q_1(x),\ldots,q_s(x))
        \in\cG_{\mathfrak b,m}^s.
    \end{equation*}
    For $0\leq n<\mathfrak b^m$, let
    $n=\eta_0+\eta_1\mathfrak b+\cdots+\eta_{m-1}\mathfrak b^{m-1}$
    be its $\mathfrak b$-adic expansion and associate with it the polynomial
    \begin{equation*}
        n(x)=\sum_{r=0}^{m-1}\eta_rx^r\in\bZ_{\mathfrak b}[x].
    \end{equation*}
    For a formal Laurent series, define
    \begin{equation*}
        v_m\left(\sum_{\ell=w}^{\infty}t_\ell x^{-\ell}\right)
        \coloneqq
        \sum_{\ell=\max\{1,w\}}^m t_\ell\mathfrak b^{-\ell}.
    \end{equation*}
    The polynomial lattice point set
    $\bm{\cS}_{P,\mathfrak b,m,s}(\bm q)$ consists of
    \begin{equation*}
        \bm y_n=
        \left(
        v_m\left(\frac{n(x)q_1(x)}{P(x)}\right),\ldots,
        v_m\left(\frac{n(x)q_s(x)}{P(x)}\right)
        \right)\in[0,1)^s,
        \quad n=0,\ldots,\mathfrak b^m-1.
    \end{equation*}
\end{definition}

\begin{definition}[Digit interlacing]
    For an interlacing factor $\alpha\in\bN$, define
    $\cD_\alpha:[0,1)^\alpha\to[0,1)$ as follows. If
    $x_r=\sum_{a=1}^{\infty}\xi_{r,a}\mathfrak b^{-a}$ for
    $r=1,\ldots,\alpha$, then
    \begin{equation*}
        \cD_\alpha(x_1,\ldots,x_\alpha)
        \coloneqq
        \sum_{a=1}^{\infty}\sum_{r=1}^{\alpha}
        \xi_{r,a}\mathfrak b^{-r-(a-1)\alpha}.
    \end{equation*}
    For vectors in $[0,1)^{\alpha s}$, apply this map componentwise in
    consecutive blocks of length $\alpha$. An interlaced polynomial lattice
    rule of order $\alpha$ in $s$ dimensions uses the points
    \begin{equation*}
        \cD_\alpha\bigl(\bm{\cS}_{P,\mathfrak b,m,\alpha s}(\bm q)\bigr),
        \qquad \bm q\in\cG_{\mathfrak b,m}^{\alpha s}.
    \end{equation*}
\end{definition}

For functions in $\mathcal W_{s,\alpha,\bgg,q,\infty}$, the following CBC
construction and error bound are known.

\begin{theorem}[{\cite[Theorem~3.10]{dick2014higher}}]
    \label{thm:ho-qmc-error}
    Let $\alpha,s\in\bN$ with $\alpha>1$, let
    $1\leq q\leq\infty$, and let
    $\bgg=(\gamma_{\mfu})_{\mfu\subset\bN}$ be a collection of weights.
    Let $\mathfrak b$ be prime and $m\in\bN$. Then an interlaced polynomial
    lattice rule of order $\alpha$ with $N=\mathfrak b^m$ points can be
    constructed using a component-by-component algorithm such that, for every
    $\lambda\in(1/\alpha,1]$,
    \begin{equation}
        \label{eq:cbc-bound-estimate}
        \abs{Q_{N,s}(F)-I_s(F)}
        \leq
        \left(
        \frac{2}{N-1}
        \sum_{\emptyset\neq\mfu\subseteq\{1:s\}}
        \gamma_{\mfu}^{\lambda}
        [\rho_{\alpha,\mathfrak b}(\lambda)]^{\abs{\mfu}}
        \right)^{1/\lambda}
        \norm{F}_{s,\alpha,\bgg,q,\infty},
    \end{equation}
    where $\rho_{\alpha,\mathfrak b}(\lambda)$ is the constant from
    \cite[Theorem~3.10]{dick2014higher}.
\end{theorem}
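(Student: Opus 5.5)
The plan is to follow the standard route for interlaced polynomial lattice rules: first reduce the quadrature error to a computable figure of merit via Walsh expansions, and then show by a component-by-component (CBC) averaging argument that the greedy choice of generating vector makes this figure of merit small. Since the statement is quoted from \cite[Theorem~3.10]{dick2014higher}, I would reproduce that argument and keep track of the $r=\infty$ normalisation of the norm $\norm{\cdot}_{s,\alpha,\bgg,q,\infty}$.

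\emph{Step 1: error in terms of Walsh coefficients.} Expand $F\in\mathcal W_{s,\alpha,\bgg,q,\infty}$ in the base-$\mathfrak b$ Walsh system. Use the character property of digital nets: for the interlaced point set $\cD_\alpha(\bm{\cS}_{P,\mathfrak b,m,\alpha s}(\bm q))$ one has
\[
Q_{N,s}(F)-I_s(F)=\sum_{\bm 0\neq\bm k\in\mathcal P^\perp}\widehat F(\bm k),
\]
where $\mathcal P^\perp$ is the dual net. Group the frequencies by their support $\mfu=\supp(\bm k)$. Then invoke the Walsh-coefficient decay for functions with mixed derivatives up to order $\alpha$ in each variable:
\[
\abs{\widehat F(\bm k)}\le C\,\gamma_{\mfu}\,\mathfrak b^{-\mu_\alpha(\bm k)}\,\norm{F}_{s,\alpha,\bgg,q,\infty}.
\]
Here $\mu_\alpha$ is the Dick weight, and the constant depends only on $\alpha$ and $\mathfrak b$. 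This bound comes from repeated integration by parts against the Walsh functions, with the unanchored integrals over $[0,1]^{s-\abs{\mfv}}$ matching the structure of the norm. With this decay, the error is bounded by $\norm{F}_{s,\alpha,\bgg,q,\infty}$ times the figure of merit
\[
E(\bm q)=\sum_{\emptyset\neq\mfu\subseteq\{1:s\}}\gamma_{\mfu}\sum_{\bm k_{\mfu}\in\mathcal P^\perp_{\mfu}}\mathfrak b^{-\mu_\alpha(\bm k_{\mfu})}.
\]

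\emph{Step 2: pull the interlacing back to the polynomial lattice.} Use the fact that digit interlacing maps $\alpha s$-dimensional frequencies to $s$-dimensional ones, with $\mu_\alpha$ of the interlaced frequency bounded below by a linear combination of the $\mu_1$ values of the $\alpha$ component frequencies. This rewrites $E(\bm q)$ as a sum over the dual lattice $\{\bm k:\ \mathrm{tr}_m(\bm k)\cdot\bm q\equiv 0\ (\mathrm{mod}\ P)\}$ of the original $\alpha s$-dimensional polynomial lattice. The result is a product-type kernel that can be evaluated exactly; its one-dimensional sums are what define $\rho_{\alpha,\mathfrak b}(\lambda)$.

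\emph{Step 3: CBC induction (the main obstacle).} I would argue by induction on the dimension $d$, choosing the $\alpha$ polynomials belonging to coordinate $d$ greedily to minimise $E_d$. The induction claim is
\[
E_d(\bm q)^\lambda\le\frac{2}{N-1}\sum_{\emptyset\neq\mfu\subseteq\{1:d\}}\gamma_{\mfu}^\lambda\,\rho_{\alpha,\mathfrak b}(\lambda)^{\abs{\mfu}}.
\]
For the step, split $E_d$ into the part not involving coordinate $d$ (bounded by the induction hypothesis) and the new part. Apply Jensen's inequality $(\sum a_i)^\lambda\le\sum a_i^\lambda$, valid for $\lambda\in(1/\alpha,1]$. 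The greedy minimiser is no worse than the average over all candidate polynomials in $\cG_{\mathfrak b,m}$. That average is computed using irreducibility of $P$: each nonzero residue condition is hit by a $1/(\mathfrak b^m-1)$ fraction of candidates, which is where the factor $(N-1)^{-1}$ and the constant $2$ come from. The requirement $\lambda>1/\alpha$ is exactly what makes $\sum_k\mathfrak b^{-\lambda\mu_\alpha(k)}$ finite, so that $\rho_{\alpha,\mathfrak b}(\lambda)<\infty$.

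\emph{Step 4: conclude.} Take the $1/\lambda$-th power and combine with Step 1 to obtain \eqref{eq:cbc-bound-estimate}. The delicate bookkeeping is in the averaging of Step 3, namely controlling the interlaced Dick weights uniformly over the candidate polynomials. I would follow \cite{dick2014higher} verbatim there rather than re-derive the constants.
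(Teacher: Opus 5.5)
Your outline follows the argument of \cite[Theorem~3.10]{dick2014higher}, which the paper imports without proof: Walsh-coefficient decay bounds the error by a dual-net figure of merit, the interlacing lower bound on $\mu_\alpha$ reduces this to a product kernel over the dual of the $\alpha s$-dimensional polynomial lattice, and CBC averaging with irreducible $P$ finishes the argument, so the two approaches coincide. When you write it out, carry three points of bookkeeping: the Walsh constant must be $C_{\alpha,\mathfrak b}^{\abs{\mfu}}$ rather than a single $C$ (it is absorbed into $[\rho_{\alpha,\mathfrak b}(\lambda)]^{\abs{\mfu}}$, which is why no leading constant appears); the frequencies whose new component satisfies $\mathfrak b^m\mid k$ need separate treatment, since their trace vanishes and the congruence then does not involve the new polynomial; and the induction should run over the $\alpha s$ polynomials chosen one at a time (as the $\alpha sN\log N$ cost indicates), with weights lifted from $\gamma_{\mfu}$, so that your block-level claim is recovered at multiples of $\alpha$.
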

\begin{remark}[CBC construction]
    \label{rem:cost-cbc}
    Using the criterion $E_d(\bm q)$ from
    \cite[Eq.~(3.31)]{dick2014higher} with our Gevrey--SPOD weights
    \eqref{eq:SPOD-weights}, an interlaced polynomial lattice rule of order
    $\alpha$ with $N=\mathfrak b^m$ points can be constructed such that the
    error bound \eqref{eq:cbc-bound-estimate} holds, with total computational
    cost
    \begin{equation*}
        \cO\bigl(\alpha s N \log N+\alpha^{2} s^{2} N \bigr),
    \end{equation*}
    see \cite[Section~3.4]{dick2014higher}. The fast CBC algorithm developed
    there for the analytic SPOD weights must be adapted to account for the
    additional Gevrey exponent $\mfs_{y}$ in \eqref{eq:SPOD-weights}. This
    amounts to replacing the factorial ratios in the recursive updates by
    their $\mfs_{y}$-th powers. The structure and computational complexity of
    the algorithm are unchanged. We emphasize that, since the weights
    \eqref{eq:SPOD-weights} are independent of $\eps$, so is the search
    criterion; hence a single set of generating matrices works for all
    perturbation values $\eps$.
\end{remark}

We now prove that the choice of Gevrey-SPOD weights \eqref{eq:SPOD-weights} yields a dimension-independent
QMC error bound. For a general Gevrey index $\mfs_{y} \ge 1$, this leads to an additional restriction on the admissible summability exponent $p$.

\begin{lemma}[Dimension-independent Gevrey-SPOD sum]
    \label{lem:gevrey-spod-summability}
    Let $\mfs_{y}\geq1$, let $0<p\leq1/\mfs_{y}$, and suppose that
    $\bm\beta\in\ell^p(\bN)$. Fix $C_2>0$ and set
    \begin{equation*}
        \alpha\coloneqq\floor{\frac1p}+1,
        \qquad \lambda\coloneqq p,
    \end{equation*}
    and let $\bgg$ be the Gevrey-SPOD weights
    \eqref{eq:SPOD-weights}. 
    For every prime $\mathfrak b$,
    \begin{equation}
        \label{eq:spod-summability-constant}
        S_{\mathrm{SPOD}}
        \coloneqq
        \sup_{s\in\bN}
        \sum_{\emptyset\neq\mfu\subseteq\{1:s\}}
        \gamma_{\mfu}^{p}
        [\rho_{\alpha,\mathfrak b}(p)]^{\abs{\mfu}}
        <\infty.
    \end{equation}
\end{lemma}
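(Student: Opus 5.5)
The plan follows the standard SPOD summability argument of \cite{dick2014higher}, modified to absorb the Gevrey factor $(\abs{\bm\nu_{\mfu}}!)^{\mfs_y}$. Two ingredients are needed. First, since $0<p\leq1$, the elementary inequality $(\sum a_i)^p\leq\sum a_i^p$ for nonnegative $a_i$ applied to \eqref{eq:SPOD-weights} gives
\begin{equation*}
\gamma_{\mfu}^p\leq\sum_{\bm\nu_{\mfu}\in\{1:\alpha\}^{\abs{\mfu}}}
(\abs{\bm\nu_{\mfu}}!)^{\mfs_y p}\prod_{j\in\mfu}\bigl(2C_2^{\nu_j}\beta_j^{\nu_j}\bigr)^p.
\end{equation*}
Second, $\mfs_y p\leq1$ gives $(\abs{\bm\nu_{\mfu}}!)^{\mfs_y p}\leq\abs{\bm\nu_{\mfu}}!$. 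This is the only place where the restriction $p\leq1/\mfs_y$ enters, and it reduces the problem to the analytic SPOD case of \cite{dick2014higher} with $p$-th powers.

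Next, I will set $d_j\coloneqq 2\max(1,C_2)^{\alpha}\beta_j^p$ and $\rho\coloneqq\rho_{\alpha,\mathfrak b}(p)$. This constant is finite because $\lambda=p>1/\alpha$ by the choice $\alpha=\floor{1/p}+1$. Since $\beta_j$ is bounded, $\beta_j^{\nu_j p}\le \max(1,\beta_1)^{\alpha p}\beta_j^{p}$ for $\nu_j\ge1$ (using only the first power of $\beta_j^p$ and bounding the rest). Hence there is a constant $c_j\le c\,\beta_j^p$ such that
\begin{equation*}
\sum_{\emptyset\neq\mfu\subseteq\{1:s\}}\gamma_{\mfu}^p\rho^{\abs{\mfu}}
\leq\sum_{\emptyset\neq\mfu\subseteq\{1:s\}}\sum_{\bm\nu_{\mfu}\in\{1:\alpha\}^{\abs{\mfu}}}\abs{\bm\nu_{\mfu}}!\prod_{j\in\mfu}c_j.
\end{equation*}
Here $c=2\rho\max(1,C_2)^{\alpha}\max(1,\beta_1)^{\alpha}$ is independent of $j$ and $s$.

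To bound the factorial I use $\abs{\bm\nu_{\mfu}}!\leq(\alpha\abs{\mfu})!$, and the number of $\bm\nu_{\mfu}$ is $\alpha^{\abs{\mfu}}$. Grouping by $k=\abs{\mfu}$ and using $\sum_{\abs{\mfu}=k}\prod_{j\in\mfu}c_j\le\frac1{k!}(\sum_j c_j)^k$, the sum is bounded by $\sum_{k\ge1}\frac{(\alpha k)!}{k!}(\alpha c\norm{\bm\beta}_{\ell^p}^p)^k$. This series diverges because of the factorial growth, so this crude bound fails and the factorial has to be handled more carefully. That is the main obstacle.

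The fix is the standard trick from \cite{dick2014higher} and \cite{kuo2012QMC}. Split $\beta_j^{p}=\beta_j^{p}\,j^{0}$ and use that $\bm\beta\in\ell^p$ with $\beta$ nonincreasing; if monotonicity is not assumed, reorder the terms, since the sum is symmetric. Then introduce an auxiliary sequence by choosing $J$ with $\sum_{j>J}c_j$ small, and apply the \emph{weighted} factorial bound
\begin{equation*}
\abs{\bm\nu}!\prod_j c_j^{\,[\nu_j\ge1]}\le \abs{\bm\nu}!\prod_j \bigl(c\beta_j^{p}\bigr)^{\nu_j/\alpha}\cdot(\text{bounded}),
\end{equation*}
and finish with the lemma $\sum_{\bm\nu\in\cF,\,\nu_j\le\alpha}\abs{\bm\nu}!\,\bm d^{\bm\nu}<\infty$, which holds whenever $\sum_j d_j<1$. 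In other words, I will redo the estimate keeping the full power $\beta_j^{\nu_jp}$ rather than crudely reducing it to $\beta_j^p$. Concretely, I will write
\begin{equation*}
\sum_{\mfu}\gamma_\mfu^p\rho^{\abs{\mfu}}\le\sum_{\bm\nu\in\{0:\alpha\}^s\setminus\{\bm0\}}\abs{\bm\nu}!\prod_{j}\bigl(\tilde c\,\beta_j^{p}\bigr)^{\nu_j},
\end{equation*}
with $\tilde c=(2\rho)\max(1,C_2)^{p}$. This bound uses that $\nu_j\geq1$ for $j\in\mfu$, which gives $\rho\le\rho^{\nu_j}\max(1,\rho)^{\alpha}$ and similarly for the factor $2$; the constant $\max(1,\rho)^{\alpha}$ is absorbed. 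The weights $\tilde c\beta_j^p$ are summable, but their sum need not be less than $1$. I will therefore split the indices into the finitely many $j\le J$, where $\nu_j\le\alpha$ contributes only a finite multiplicative factor, and those $j>J$, where $\sum_{j>J}\tilde c\beta_j^p<1$. For the split, the bound $\abs{\bm\nu}!\le 2^{\abs{\bm\nu}}\abs{\bm\nu_{\le J}}!\,\abs{\bm\nu_{>J}}!$ decouples the two groups. For the tail, $\sum_{\bm\nu}\abs{\bm\nu}!\bm d^{\bm\nu}\le\sum_{n}(\sum_j d_j)^n<\infty$ provided $\sum_{j>J}2\tilde c\beta_j^p<1$. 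Both pieces are independent of $s$, which gives \eqref{eq:spod-summability-constant}.
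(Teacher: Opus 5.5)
Your reduction is sound, and it takes a different route from the paper. Bounding $(\abs{\bm\nu}!)^{\mfs_y p}\le\abs{\bm\nu}!$ turns every $p\le 1/\mfs_y$ into the endpoint case. The paper instead uses a ratio test for $p<1/\mfs_y$ and needs a head/tail argument only at $p=1/\mfs_y$. Note that this is not the analytic SPOD case of \cite{dick2014higher}, where the factorial enters as $(\abs{\bm\nu}!)^{p}$; with $\abs{\bm\nu}!$ to the first power you are exactly in the paper's endpoint situation. Several of your steps are fine: the head sum over $\{0:\alpha\}^J$ (a finite sum) and the decoupling $\abs{\bm\nu}!\le 2^{\abs{\bm\nu}}\abs{\bm\nu_{\le J}}!\,\abs{\bm\nu_{>J}}!$.

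The gap is the tail estimate, which is the heart of the argument. The inequality $\sum_{\bm\nu}\abs{\bm\nu}!\,\bm d^{\bm\nu}\le\sum_n\bigl(\sum_j d_j\bigr)^n$ goes the wrong way. The multinomial theorem gives $\bigl(\sum_j d_j\bigr)^n=\sum_{\abs{\bm\nu}=n}\frac{n!}{\bm\nu!}\bm d^{\bm\nu}\le\sum_{\abs{\bm\nu}=n}n!\,\bm d^{\bm\nu}$, with strict inequality as soon as some $\nu_j\ge2$ is allowed. For example, $2!\,(d_1^2+d_1d_2+d_2^2)>(d_1+d_2)^2$. Your tail step never uses the cap $\nu_j\le\alpha$, and without it the tail sum is infinite, since $\sum_n n!\,d_j^n=\infty$ for any $d_j>0$. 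So smallness of $\sum_{j>J}d_j$ alone cannot close the argument.

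The missing idea is to pay for the absent $1/\bm\nu!$ with the cap. Since $\nu_j\le\alpha$, you have $\bm\nu!\le(\alpha!)^{\abs{\bm\nu}}$, hence $\abs{\bm\nu}!\,\bm d^{\bm\nu}\le\frac{\abs{\bm\nu}!}{\bm\nu!}(\alpha!\,\bm d)^{\bm\nu}$. The multinomial theorem then bounds the tail by $\sum_n\bigl(2\,\alpha!\sum_{j>J}d_j\bigr)^n$, which is finite once $J$ is chosen with $2\,\alpha!\sum_{j>J}d_j<1$. The paper handles the same issue differently: it repeats each weight $\alpha$ times, so capped multi-indices become finite subsets. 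For elementary symmetric sums the bound $e_k\le T^k/k!$ does hold in the needed direction.

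There is also a minor bookkeeping slip. The claim $\rho\le\rho^{\nu_j}\max(1,\rho)^{\alpha}$ is false when $\rho<1$ and $\nu_j\ge2$. A per-coordinate factor also cannot be ``absorbed'' into a global constant. Use $\rho\le\max\{1,\rho\}^{\nu_j}$ instead and put $\max\{1,\rho\}$ into $\tilde c$. With these two corrections your single-case proof goes through.
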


\begin{proof}
    Since $\alpha>1/p$, we have $1/\alpha<p\leq1$. More generally, fix
    $\lambda\in(1/\alpha,1]$ and define
    \begin{equation*}
        K_{\alpha,\mathfrak b}(\lambda)
        \coloneqq
        [\rho_{\alpha,\mathfrak b}(\lambda)]^{1/\lambda}.
    \end{equation*}
    Using $(\sum_i a_i)^\lambda\leq\sum_i a_i^\lambda$ for
    $a_i\geq0$ and $0<\lambda\leq1$, the SPOD weights satisfy
    \begin{align}
        &\sum_{\emptyset\neq\mfu\subseteq\{1:s\}}
        \gamma_{\mfu}^{\lambda}
        [\rho_{\alpha,\mathfrak b}(\lambda)]^{\abs{\mfu}}
        \nonumber\\
        &\quad\leq
        \sum_{\bm0\neq\bm\nu\in\{0:\alpha\}^s}
        (\abs{\bm\nu}!)^{\mfs_{y}\lambda}
        \prod_{\substack{1\leq j\leq s\\\nu_j>0}}
        \left(
        K_{\alpha,\mathfrak b}(\lambda)
        2^{\delta(\nu_j,\alpha)}
        C_2^{\nu_j}\beta_j^{\nu_j}
        \right)^{\lambda}.
        \label{eq:spod-reindexed-sum}
    \end{align}
    The zero components in $\{0:\alpha\}^s$ encode coordinates outside
    $\mfu$. Define
    \begin{equation}
        \label{eq:rescaled-qmc-weights}
        \widetilde\beta_j
        \coloneqq
        2C_2\max\{K_{\alpha,\mathfrak b}(\lambda),1\}\beta_j,
        \qquad j\geq1.
    \end{equation}
    For $1\leq r\leq\alpha$,
    \begin{equation*}
        K_{\alpha,\mathfrak b}(\lambda)
        2^{\delta(r,\alpha)}C_2^r\beta_j^r
        \leq\widetilde\beta_j^r.
    \end{equation*}
    Consequently, the right-hand side of
    \eqref{eq:spod-reindexed-sum} is bounded by
    \begin{equation}
        \label{eq:spod-multi-index-sum}
        T_s(\lambda)
        \coloneqq
        \sum_{\bm0\neq\bm\nu\in\{0:\alpha\}^s}
        \left(
        (\abs{\bm\nu}!)^{\mfs_{y}}
        \prod_{j=1}^s\widetilde\beta_j^{\nu_j}
        \right)^{\lambda}.
    \end{equation}
    
    Repeat each entry of $\widetilde{\bm\beta}$ exactly $\alpha$ times by setting
    \begin{equation}
        \label{eq:repeated-qmc-sequence}
        d_{\alpha(j-1)+r}\coloneqq\widetilde\beta_j,
        \qquad j\geq1,\quad r=1,\ldots,\alpha.
    \end{equation}
    Every term in \eqref{eq:spod-multi-index-sum} can then be represented by a
    finite subset $\mfv\subset\bN$. 
    Enlarging the sum to all finite subsets gives
    \begin{align}
        T_s(\lambda)
        &\leq
        \sum_{\substack{\emptyset\neq\mfv\subset\bN\\\abs{\mfv}<\infty}}
        \left(
        (\abs{\mfv}!)^{\mfs_{y}}
        \prod_{j\in\mfv}d_j
        \right)^{\lambda}
        =
        \sum_{\ell=1}^{\infty}
        (\ell!)^{\mfs_{y}\lambda}
        e_\ell(d_1^\lambda,d_2^\lambda,\ldots),
        \label{eq:spod-elementary-symmetric-sum}
    \end{align}
    where $e_\ell$ is the $\ell$th elementary symmetric sum. Since
    \begin{equation*}
        e_\ell(d_1^\lambda,d_2^\lambda,\ldots)
        \leq\frac1{\ell!}
        \left(\sum_{j=1}^{\infty}d_j^\lambda\right)^\ell,
    \end{equation*}
    we obtain
    \begin{equation}
        \label{eq:spod-factorial-series}
        T_s(\lambda)
        \leq
        \sum_{\ell=1}^{\infty}
        (\ell!)^{\mfs_{y}\lambda-1}D_\lambda^\ell,
        \qquad
        D_\lambda\coloneqq\sum_{j=1}^{\infty}d_j^\lambda.
    \end{equation}
    
    We now take $\lambda=p$. Since $\bm\beta\in\ell^p(\bN)$,
    \eqref{eq:rescaled-qmc-weights} and
    \eqref{eq:repeated-qmc-sequence} imply $D_p<\infty$. If
    $p<1/\mfs_{y}$, the ratio of consecutive terms on the right-hand side of
    \eqref{eq:spod-factorial-series} is
    \begin{equation*}
        D_p(\ell+1)^{\mfs_{y} p-1}\longrightarrow0,
    \end{equation*}
    so the series converges.
    
    It remains to consider the endpoint $p=1/\mfs_{y}$. Set $c_j\coloneqq d_j^p$.
    Then $\sum_{j\geq1}c_j<\infty$. Choose $J\in\bN$ such that
    \begin{equation*}
        T\coloneqq\sum_{j>J}c_j<1.
    \end{equation*}
    At the endpoint, $T_s(p)$ is bounded by
    $\sum_{\ell\geq1}\ell!e_\ell(c_1,c_2,\ldots)$. Splitting each elementary
    symmetric sum into its finite head and tail gives
    \begin{equation*}
        e_\ell(c_1,c_2,\ldots)
        =
        \sum_{r=0}^{\min\{J,\ell\}}
        e_r(c_1,\ldots,c_J)
        e_{\ell-r}(c_{J+1},c_{J+2},\ldots).
    \end{equation*}
    The tail satisfies
    \begin{equation*}
        e_k(c_{J+1},c_{J+2},\ldots)\leq\frac{T^k}{k!}.
    \end{equation*}
    Therefore
    \begin{align*}
        \sum_{\ell=0}^{\infty}\ell!e_\ell(c_1,c_2,\ldots)
        &\leq
        \sum_{r=0}^{J}e_r(c_1,\ldots,c_J)
        \sum_{k=0}^{\infty}\frac{(k+r)!}{k!}T^k
        <\infty,
    \end{align*}
    because $(k+r)!/k!$ grows polynomially in $k$ while $T^k$ decays
    geometrically. Thus the SPOD sum is bounded independently of $s$ also when
    $p=1/\mfs_{y}$, without a smallness condition on
    $\norm{\bm\beta}_{\ell^p(\bN)}$. This proves
    \eqref{eq:spod-summability-constant}.
\end{proof}

\begin{corollary}[Dimension-independent Gevrey QMC error]
    \label{cor:qmc-error-bound}
    Let $\mfs_{y}\geq1$, let $0<p<1$ satisfying $p \le 1/\mfs_{y}$, and suppose that
    $\bm\beta\in\ell^p(\bN)$. Fix $C_2>0$, set
    $\alpha=\floor{1/p}+1$ and let $\bgg$ be the Gevrey-SPOD weights
    \eqref{eq:SPOD-weights}. For every $s\in\bN$, every
    $1\leq q\leq\infty$, every prime $\mathfrak b$, and
    $N=\mathfrak b^m$, an interlaced polynomial lattice rule of order
    $\alpha$ can be constructed such that
    \begin{equation}
        \label{eq:cbc-bound-estimate-spod}
        \abs{Q_{N,s}(F)-I_s(F)}
        \leq
        C_{\mathrm{QMC}}
        \norm{F}_{s,\alpha,\bgg,q,\infty}N^{-1/p},
    \end{equation}
    where, with the constant $S_{\mathrm{SPOD}}$ 
    as in \eqref{eq:spod-summability-constant},
    \begin{equation}
        \label{eq:qmc-constant}
        C_{\mathrm{QMC}}
        \coloneqq(4S_{\mathrm{SPOD}})^{1/p}
    \end{equation}
    is independent of $s$ and $N$. In particular, if $F$ satisfies
    \eqref{eq:ass-QMC-integrand}, then
    \begin{equation}
        \label{eq:qmc-error-gevrey-integrand}
        \abs{Q_{N,s}(F)-I_s(F)}
        \leq C_{\mathrm{QMC}}C_1N^{-1/p}.
    \end{equation}
\end{corollary}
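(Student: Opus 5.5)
The proof combines \cref{thm:ho-qmc-error} with \cref{lem:gevrey-spod-summability}, taking $\lambda=p$. This choice is admissible: $\alpha=\floor{1/p}+1>1/p$ gives $1/\alpha<p$, and $p<1$ gives $p\le 1$, so $\lambda=p\in(1/\alpha,1]$. Since $\alpha\ge 2>1$, \cref{thm:ho-qmc-error} yields, for every $s$, $q$, prime $\mathfrak b$ and $N=\mathfrak b^m$, a CBC-constructed interlaced polynomial lattice rule of order $\alpha$ with
\begin{equation*}
    \abs{Q_{N,s}(F)-I_s(F)}
    \leq
    \left(\frac{2}{N-1}\sum_{\emptyset\neq\mfu\subseteq\{1:s\}}\gamma_{\mfu}^{p}[\rho_{\alpha,\mathfrak b}(p)]^{\abs{\mfu}}\right)^{1/p}\norm{F}_{s,\alpha,\bgg,q,\infty}.
\end{equation*}
By \cref{lem:gevrey-spod-summability}, the inner sum is bounded by $S_{\mathrm{SPOD}}<\infty$ uniformly in $s$. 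This is the only place where $p\le 1/\mfs_{y}$ enters.

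Next we turn $(N-1)^{-1}$ into $N^{-1}$. For $N\ge 2$, which holds for every $m\ge1$ because $\mathfrak b\ge 2$, we have $N/(N-1)\le 2$, so $2/(N-1)\le 4/N$. The prefactor is therefore at most $(4S_{\mathrm{SPOD}}/N)^{1/p}=C_{\mathrm{QMC}}N^{-1/p}$, which is \eqref{eq:cbc-bound-estimate-spod}. The constant depends only on $p$, $\mfs_{y}$, $C_2$, $\bm\beta$ and $\mathfrak b$ through $S_{\mathrm{SPOD}}$, and not on $s$ or $N$. It is also independent of $\eps$, because the weights are.

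Finally, if $F$ satisfies \eqref{eq:ass-QMC-integrand} with the same $C_2$ and $\bm\beta$ used in the weights \eqref{eq:SPOD-weights}, then \eqref{eq:gevrey-spod-norm-bound} gives $\norm{F}_{s,\alpha,\bgg,q,\infty}\le C_1$. Substituting this yields \eqref{eq:qmc-error-gevrey-integrand}.

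The substantive difficulty, namely the $s$-independent summability including the endpoint $p=1/\mfs_{y}$, is already settled in the lemma. What remains to check carefully is the bookkeeping: $\lambda=p$ must lie in the admissible range of \cref{thm:ho-qmc-error}, and the $N-1$ versus $N$ factor must be handled. If one wants to include the trivial case $N=1$ (i.e.\ $m=0$), the constant would have to be adjusted there, but the statement takes $m\in\bN$, so this case does not arise.
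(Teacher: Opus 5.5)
Your proposal is correct and follows the paper's proof essentially verbatim: take $\lambda=p$ in \cref{thm:ho-qmc-error}, bound the weight sum by $S_{\mathrm{SPOD}}$ via \cref{lem:gevrey-spod-summability}, use $N/(N-1)\le 2$ for $N=\mathfrak b^m\ge 2$, and invoke \eqref{eq:gevrey-spod-norm-bound} for the Gevrey integrand. Your explicit check that $\lambda=p\in(1/\alpha,1]$ and $\alpha\ge 2$ is a useful addition; the paper leaves it implicit.
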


\begin{proof}
    Take $\lambda=p$ in \cref{thm:ho-qmc-error}. By
    \cref{lem:gevrey-spod-summability}, the sum in
    \eqref{eq:cbc-bound-estimate} is bounded by $S_{\mathrm{SPOD}}$. Since
    $N=\mathfrak b^m\geq2$, we have $N/(N-1)\leq2$, and hence
    \begin{equation*}
        \left(\frac{2S_{\mathrm{SPOD}}}{N-1}\right)^{1/p}
        \leq
        (4S_{\mathrm{SPOD}})^{1/p}N^{-1/p}.
    \end{equation*}
    This proves \eqref{eq:cbc-bound-estimate-spod}. The estimate
    \eqref{eq:qmc-error-gevrey-integrand} follows from
    \eqref{eq:gevrey-spod-norm-bound}.
\end{proof}

\begin{remark}
    The constant $C_{\mathrm{QMC}}$ depends on $p$, $\mfs_{y}$, the prime base
    $\mathfrak b$, $C_2$, and $\bm\beta$, but is independent of the dimension
    $s$ and the number of points $N$. The dependence on the particular
    integrand enters only through $C_1$ in
    \eqref{eq:ass-QMC-integrand}.
\end{remark}

\section{Combined error estimate}
\label{sec:combined-error}

We now specialize to $D=(-1,1)$ in order to use the robust spatial
approximation result from the appendix.  In addition to the assumptions above,
suppose that, for some spatial Gevrey index $\mfs_{x}\geq1$,
\begin{equation*}
    f\in\cG^{\mfs_x}(\overline D),
    \qquad
    b(\cdot;\bm y)\in\cG^{\mfs_x}(\overline D)
    \quad\forall\bm y\in U,
\end{equation*}
with Gevrey constants for $b$ that are uniform in $\bm y$.  More precisely,
suppose there are positive constants $C_{b,x}$, $\gamma_{b,x}$, $C_f$, and
$\gamma_f$ such that, for every $n\in\bN_0$,
\begin{equation}
    \label{eq:uniform-spatial-gevrey}
    \sup_{\bm y\in U}
    \norm{\partial_x^n b(\cdot;\bm y)}_{L^\infty(D)}
    \leq C_{b,x}\gamma_{b,x}^n(n!)^{\mfs_{x}},
    \qquad
    \norm{f^{(n)}}_{L^\infty(D)}
    \leq C_f\gamma_f^n(n!)^{\mfs_{x}}.
\end{equation}
The spatial index $\mfs_{x}$ need not coincide with the parametric index $\mfs_{y}$.
Thus every truncated coefficient
$b(\cdot;(\bm y_{\leq s},\bm0))$ satisfies the assumptions of
\cref{cor:gevrey-hp-galerkin}, with constants independent of $s$ and
$\bm y_{\leq s}$.  Fix an admissible mesh parameter $0<\kappa\leq\kappa_0$.
For a polynomial degree $p_{\mathrm{FE}}\in\bN$, let
$u_{\eps,s,p_{\mathrm{FE}}}$ denote the Galerkin approximation of
$u_{\eps,s}$ in
\begin{equation*}
    S_0(\kappa,p_{\mathrm{FE}})
    \coloneqq S(\kappa,p_{\mathrm{FE}})\cap H_0^1(D).
\end{equation*}

For $G\in L^2(D)$, identified with the functional
$v\mapsto(G,v)_{L^2(D)}$, set
\begin{equation*}
    \cI_\eps(G)
    \coloneqq\int_U G(u_\eps(\cdot;\bm y))\,\bm\pi(\dif\bm y).
\end{equation*}

\begin{theorem}[Combined QMC--$hp$-Galerkin error]
    \label{thm:combined-error}
    Suppose that
    \cref{ass:uniform-bounds,ass:gevrey-b,ass:dimension-truncation-weights}
    and \eqref{eq:uniform-spatial-gevrey} hold for some $0<p<1$ with
    $p\leq1/\mfs_{y}$.  Set $\alpha=\floor{1/p}+1$ and let $N=\mathfrak b^m$, where
    $\mathfrak b$ is prime.  Then the interlaced polynomial lattice rule from
    \cref{cor:qmc-error-bound}, translated to $[-1/2,1/2]^s$, satisfies, for every
    $\eps\in(0,1]$, $s,p_{\mathrm{FE}}\in\bN$, and $G\in L^2(D)$,
    \begin{equation}
        \label{eq:combined-error}
        \begin{aligned}
            \abs*{\cI_\eps(G)
                -Q_{N,s}\bigl(G(u_{\eps,s,p_{\mathrm{FE}}})\bigr)}
            &\leq C\Bigl(
            s^{-2/p+1}+N^{-1/p} \\
            &\qquad +\exp\bigl(-\sigma p_{\mathrm{FE}}^{1/\mfs_{x}}\bigr)
            \Bigr)\norm{G}_{L^2(D)}.
        \end{aligned}
    \end{equation}
    The constants $C,\sigma>0$ are independent of
    $\eps\in(0,1]$, $s$, $N$, and $p_{\mathrm{FE}}$.
\end{theorem}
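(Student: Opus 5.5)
We will split the error by the triangle inequality into three terms:
\begin{align*}
\cI_\eps(G)-Q_{N,s}\bigl(G(u_{\eps,s,p_{\mathrm{FE}}})\bigr)
&=\int_U G\bigl(u_\eps-u_{\eps,s}\bigr)\,\bm\pi(\dif\bm y)
+\Bigl(I_s\bigl(G(u_{\eps,s})\bigr)-Q_{N,s}\bigl(G(u_{\eps,s})\bigr)\Bigr)\\
&\qquad+Q_{N,s}\bigl(G(u_{\eps,s}-u_{\eps,s,p_{\mathrm{FE}}})\bigr).
\end{align*}
Here $I_s$ denotes integration over $[-1/2,1/2]^s$. We identify it with integration of the truncated integrand over $U$, since $u_{\eps,s}$ depends only on $\bm y_{\leq s}$.

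\emph{Dimension truncation.} For the first term we invoke \cref{cor:dimension-truncation}, estimate \eqref{eq:functional-dimension-truncation}. We need $\norm{G}_{(V_\eps)'}\leq\norm{G}_{L^2(D)}$ uniformly in $\eps$. This holds because $\abs{(G,v)}\leq\norm{G}_{L^2}\norm{v}_{L^2}\leq\norm{G}_{L^2}\norm{v}_{1,\eps,D}$, which is \eqref{eq:L2-dual-embedding}. This gives $C_{\mathrm{DT}}s^{-2/p+1}\norm{G}_{L^2(D)}$ with $C_{\mathrm{DT}}$ independent of $\eps$.

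\emph{QMC error.} For the second term we set $F(\bm y_{\leq s})\coloneqq G(u_{\eps,s}(\cdot;\bm y_{\leq s}))$, shifted to $[0,1]^s$. Derivatives of $u_{\eps,s}$ in $\bm y_{\leq s}$ coincide with those of $u_\eps$ at $(\bm y_{\leq s},\bm 0)$. Hence \cref{thm:gevrey-regularity} and the $L^2$ bound on $G$ yield \eqref{eq:ass-QMC-integrand} with $C_1=B\norm{f}_{L^2(D)}\norm{G}_{L^2(D)}$ and $C_2=\rho_b\eta$, both independent of $\eps$ and $s$. Since $C_2$ is $\eps$-independent, so are the weights \eqref{eq:SPOD-weights}. \Cref{cor:qmc-error-bound} (with $p<1$, $p\leq 1/\mfs_y$) then gives $C_{\mathrm{QMC}}C_1N^{-1/p}$. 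The same generating vector works for all $\eps$.

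\emph{FE error.} The third term is an average of $N$ values of $\abs{G(u_{\eps,s}-u_{\eps,s,p_{\mathrm{FE}}})(\bm y_n)}$. It is therefore bounded by $\norm{G}_{L^2(D)}\sup_{\bm y}\norm{u_{\eps,s}-u_{\eps,s,p_{\mathrm{FE}}}}_{L^2(D)}$, which is at most $\norm{G}_{L^2}\sup_{\bm y}\norm{\cdot}_{1,\eps,D}$. We apply \cref{cor:gevrey-hp-galerkin} from the appendix to each truncated coefficient $b(\cdot;(\bm y_{\leq s},\bm0))$. By \eqref{eq:uniform-spatial-gevrey} and \cref{ass:uniform-bounds}, its Gevrey and positivity constants are uniform in $s$ and $\bm y$. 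The appendix result should give a robust bound of the form $C\exp(-\sigma p_{\mathrm{FE}}^{1/\mfs_x})$ in the energy norm, via C\'ea's lemma with the $\eps$-uniform constants \eqref{eq:continuity}--\eqref{eq:coercivity}.

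\emph{Main obstacle.} The crux is ensuring that the constants $C,\sigma$ of \cref{cor:gevrey-hp-galerkin} depend on the coefficient only through $b_0,b_1,C_{b,x},\gamma_{b,x}$ and on $f$ through $C_f,\gamma_f$. This is what makes the supremum over the parameter finite and uniform in $s$ and $\eps$. If the appendix statement is phrased for a fixed coefficient, I would retrace its dependence on these quantities. That amounts to checking that the boundary-layer decomposition constants are monotone in the Gevrey constants. Summing the three bounds and taking $C$ as the maximum of the constants yields \eqref{eq:combined-error}.
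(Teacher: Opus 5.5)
Your proposal is correct and follows the paper's proof essentially step for step. You use the same three-term splitting into dimension-truncation, QMC and spatial Galerkin errors, bounded respectively by \cref{cor:dimension-truncation}, by \cref{thm:gevrey-regularity} combined with \cref{cor:qmc-error-bound} (with $\eps$-independent $C_1$, $C_2$ and hence $\eps$-independent weights), and by equal-weight averaging together with \cref{cor:gevrey-hp-galerkin} and $\norm{v}_{L^2(D)}\leq\norm{v}_{1,\eps,D}$. The uniformity of the $hp$-constants in $\bm y$ and $s$, which you flag as the main obstacle, is exactly what the paper settles by invoking \cref{rem:appendix-uniform-hp-constants} rather than retracing the appendix.
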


\begin{proof}
    Insert the exact dimension-truncated integral and its QMC approximation.  The
    three resulting terms are, respectively, the dimension-truncation error, the
    QMC error, and the spatial Galerkin error.  The first is bounded by
    \cref{cor:dimension-truncation}.  By
    \cref{thm:gevrey-regularity}, the truncated integrand satisfies
    \eqref{eq:ass-QMC-integrand} uniformly in $\eps$ and $s$, so the second term is
    bounded by \cref{cor:qmc-error-bound}.  
    Finally, since the QMC rule has equal
    weights, \cref{cor:gevrey-hp-galerkin,rem:appendix-uniform-hp-constants} and
    $\norm{v}_{L^2(D)}\leq\norm{v}_{1,\eps,D}$ give
    \begin{equation*}
        \frac1N\sum_{n=0}^{N-1}
        \abs*{G\bigl(
            u_{\eps,s}(\cdot;\widehat{\bm y}_n)
            -u_{\eps,s,p_{\mathrm{FE}}}(\cdot;\widehat{\bm y}_n)
            \bigr)}
        \leq
        C\exp\bigl(-\sigma p_{\mathrm{FE}}^{1/\mfs_{x}}\bigr)
        \norm{G}_{L^2(D)}.
    \end{equation*}
    Here $\widehat{\bm y}_n\in[-1/2,1/2]^s$ denotes the translated QMC point.
    Combining these estimates proves \eqref{eq:combined-error}.
\end{proof}

\section{Numerical experiments}
\label{sec:numerical-experiments}
We perform numerical experiments to study the spatial discretization and
QMC errors in the QMC--$hp$-FE method.
Compared with \cite{RS26_1180}, the main new feature is a spatially
Gevrey-regular coefficient together with the approximation of the corresponding solutions by the boundary-layer adapted $hp$-FE spaces
analyzed in the appendix.  The parameter dependence remains affine and therefore corresponds to the analytic case 
$\mfs_{y}=1$ of the parametric regularity theory.  Recent work
\cite{chernov2026localgevrey,chernov2025analytic} on QMC for Gevrey-parametric diffusion problems
has observed essentially the same convergence rates as in the case of analytic parameter dependence. We therefore focus on the robust
spatial approximation and its interaction with higher-order QMC.
Throughout, we use $\eps\in\{10^{-3},10^{-5},10^{-7},10^{-9}\}$; the
corresponding convergence curves are nearly indistinguishable, confirming the $\eps$-robustness of the method.
\subsection{Model problem and discretization}
\label{subsec:num-model}
For $\bm y\in U=[-1/2,1/2]^{\bN}$, define the affine reaction coefficient
\begin{equation}
    \label{eq:num-reaction-coefficient}
    b(x;\bm y)
    =0.2+\sum_{j\geq1} y_j\psi_j(x),
    \quad
    \psi_j(x)=\beta_j\sin(j\pi x),
    \quad
    \beta_j=C_0\exp\bigl(-C_1j^{1/\mfs_{x}}\bigr),
\end{equation}
where $C_0=1$, $C_1=2$, and $\mfs_x=1.2$.  
This series is uniformly
$\mfs_x$-Gevrey in $x$: termwise differentiation and $\abs{y_j}\leq1/2$
give, for every $n\in\bN_0$,
\begin{align*}
    \sup_{\bm y\in U}
    \norm{\partial_x^n(b(\cdot;\bm y)-0.2)}_{L^\infty(D)}
    &\leq \frac{\pi^n}{2}\sum_{j\geq1}j^n e^{-2j^{1/\mfs_x}}\\
    &\leq \frac{\pi^n}{2}\int_0^\infty
    (x+1)^n e^{-2x^{1/\mfs_x}}\,\dif x \leq C\gamma^n(n!)^{\mfs_x}.
\end{align*}
Here the integral comparison is applied on each interval $[j-1,j]$; the final
bound follows from $(x+1)^n\leq2^n(1+x^n)$, the substitution
$t=2x^{1/\mfs_x}$, and Stirling's formula.  The constant term and all finite
truncations satisfy the same bound after adjusting $C$, uniformly in $\bm y$
and the truncation dimension.

For the computations, we fix $s=128$ and use the anchored truncation
\begin{equation*}
    b_s(x;\bm y_{\leq s})
    \coloneqq b(x;(\bm y_{\leq s},\bm0))
    =0.2+\sum_{j=1}^s y_j\psi_j(x),
    \quad \bm y_{\leq s}\in U_s=[-1/2,1/2]^s,
\end{equation*}
which satisfies
\begin{equation*}
    \sup_{\bm{y} \in U} \norm{b(\cdot;\bm{y})
        -b_s(\cdot;\bm y_{\leq s})}_{L^\infty(D)}
    \leq\frac12\sum_{j>s}\beta_j, 
    \quad \text{and} \quad
    b(x;\bm y)\geq 0.2 -\frac12\sum_{j\geq1}\beta_j>0.
\end{equation*}
Thus the truncation error is numerically negligible, and both the full and truncated
problems are uniformly coercive.  We suppress the fixed truncation index from
the solution and quantity-of-interest notation.  On $D=(-1,1)$ we solve
\begin{equation}
    \label{eq:num-model-problem}
    -\eps^2 u_\eps''(x;\bm y)+b_s(x;\bm y)u_\eps(x;\bm y)=1,
    \qquad
    u_\eps(-1;\bm y)=u_\eps(1;\bm y)=0.
\end{equation}

The quantity of interest is the spatial integral
\begin{equation}
    \label{eq:num-qoi}
    G(v)\coloneqq\int_{-1}^{1}v(x)\,\dif x,
    \quad
    F_\eps(\bm y)\coloneqq G(u_\eps(\cdot;\bm y)),
    \quad
    I_s(F_\eps)\coloneqq\int_{U_s}F_\eps(\bm y)\,\dif\bm{y}.
\end{equation}

For the spatial discretization we use an integrated-Legendre hierarchical
basis of uniform polynomial degree $p_{\mathrm{FE}}$.  With
$w=\kappa p_{\mathrm{FE}}\eps$ and $\kappa=1$, we use the three-element
boundary-layer mesh with nodes $\{-1,-1+w,1-w,1\}$ when $w<1/2$, and the single interval $[-1,1]$ otherwise.
Although an explicit admissible upper bound $\kappa_0$ could, in principle, be derived from the analysis in Appendix~\ref{sec:ExpCnv}, we do not attempt to determine such a bound here. Numerical experiments indicated that the convergence behavior is largely insensitive to the precise choice of $\kappa$. This is consistent with the observations reported in \cite{ScSu96,melenk1997boundarylayer}, where a similar problem was considered.

In all experiments, integrals are evaluated using composite Gauss--Legendre quadrature rules of sufficiently high order so that polynomial integrands are integrated exactly and all remaining quadrature errors are negligible; see also \cref{thm:appendix-quadrature}.

For the parametric discretization we use deterministic interlaced polynomial
lattice rules generated by the component-by-component construction in
the QMC4PDE package \cite{Nuyens2024QMC4PDE}. 
Separate point sets are constructed for the digit-interlacing factor $\alpha \in \{2,3,4\}$, with $N=2^m$
points, $m=2,\ldots,10$ and $m_{\mathrm{ref}}=12$, using the SPOD weights \eqref{eq:SPOD-weights} for the coefficient \eqref{eq:num-reaction-coefficient}. 
Following the numerical experiments in \cite[Section~4.1]{GantnerSchwabComputationalQMC}, 
we use the integrand-adapted Walsh-coefficient bound $C_{\mathrm{Walsh}}=1.0$. 
We emphasize that the construction of the lattice rules is independent of $\eps$, 
so each QMC-point set is reused for all values of the perturbation parameter $\eps$.

Since $(\beta_j)_{j\geq1}\in\ell^p(\bN)$ for every $p>0$, the QMC-theory permits
arbitrarily high-order rules. 
Due to practical limitations in the CBC construction of high-order lattice rules, 
we here focus on $\alpha=2,\ldots,4$ in the experiments.
\subsection{Robust \texorpdfstring{$hp$}{hp}-FE convergence}
\label{subsec:num-hp}
We first examine only the spatial discretization.  
We fix the parameter vector
\begin{equation*}
    \bm y_*=(1/2,\ldots,1/2)\in U_s
\end{equation*}
and compute the $hp$-FE solution for
\begin{equation*}
    p_{\mathrm{FE}}\in\{2,4,8,16,24,32,40,48,56,64,72,80\}.
\end{equation*}
The solution with $p_{\mathrm{FE,ref}}=96$ serves as reference. 
For
$e_{p_{\mathrm{FE}}}=u_{\eps,p_{\mathrm{FE}}}(\cdot;\bm y_*)-
u_{\eps,p_{\mathrm{FE,ref}}}(\cdot;\bm y_*)$, 
we measure the relative error
\begin{equation*}
    \frac{\norm{e_{p_{\mathrm{FE}}}}_{1,\eps,D}}
    {\norm{u_{\eps,p_{\mathrm{FE,ref}}}(\cdot;\bm y_*)}_{1,\eps,D}},
    \quad\mbox{where}\;\;
    \norm{v}_{1,\eps,D}^2
    =\eps^2\norm{v'}_{L^2(D)}^2+\norm{v}_{L^2(D)}^2.
\end{equation*}
The predicted spatial rate has the form
\begin{equation}
    \label{eq:num-hp-rate}
    \norm{e_{p_{\mathrm{FE}}}}_{1,\eps,D}
    \simeq C\exp\bigl(-\sigma p_{\mathrm{FE}}^{1/\mfs_{x}}\bigr).
\end{equation}
An empirical fit gives a numerical value of $\sigma$ in this error bound of
\begin{equation}
    \label{eq:num-sigma-est}
    \sigma_{\mathrm{est}}=0.827.
\end{equation}
This value is used only to balance the two error contributions in
\cref{subsec:num-combined}, and numerical evidence suggests that it is independent of $\bm{y}_{*}$.

Panel (a) of \cref{fig:convergence-plot-1} displays the relative error versus the number of spatial degrees of freedom.  
Its rapid decay is consistent with
\eqref{eq:num-hp-rate}, while the four nearly identical curves demonstrate
robustness with respect to $\eps$.

\subsection{Higher-order QMC convergence}
\label{subsec:num-qmc}

We next isolate the QMC error by fixing the spatial degree at $p_{\mathrm{FE}}=96$, for
which the spatial discretization error is negligible on the scale of this
experiment.  For each interlacing factor $\alpha$ and each $N=2^m$, for $m=2,\dots,10$, we compute
\begin{equation}
    \label{eq:num-qmc-estimator}
    F_{\eps,p_{\mathrm{FE}}}(\bm y)
    \coloneqq G(u_{\eps,p_{\mathrm{FE}}}(\cdot;\bm y)),
    \qquad
    Q_{N,s}(F_{\eps,p_{\mathrm{FE}}})
    =\frac1N\sum_{n=0}^{N-1}
    F_{\eps,p_{\mathrm{FE}}}(\widehat{\bm y}_n).
\end{equation}
For each interlacing factor $\alpha$, a reference value is computed using $N_{\mathrm{ref}}=2^{12}=4096$ QMC points. We compute the absolute
errors with respect to the reference values,
\begin{equation*}
    \abs{Q_{N,s}(F_{\eps,p_{\mathrm{FE}}})
        -Q_{N_{\mathrm{ref}},s}(F_{\eps,p_{\mathrm{FE}}})}.
\end{equation*}
For the affine parameter dependence considered here, the theory yields, for an interlacing factor $\alpha$, the asymptotic convergence rate
\begin{equation}
    \label{eq:num-qmc-rate}
    \abs{Q_{N,s}(F_{\eps,p_{\mathrm{FE}}})
        -I_s(F_{\eps,p_{\mathrm{FE}}})}
    =\cO(N^{-(\alpha-\delta)}),
\end{equation}
for arbitrarily small $\delta>0$.
The results for $\alpha=2,3,4$ are shown in
\cref{fig:convergence-plot-1}, panels (b)--(d).
Over the range considered, the numerical results are consistent with the
asymptotic convergence rates in \eqref{eq:num-qmc-rate}. We also tested
higher-order interlaced rules with $\alpha\ge 5$, for which the numerical
behavior no longer clearly followed the predicted rates. This may be
attributable to an extended, pre-asymptotic regime 
so that the computational range
accessible here is insufficient to reveal the asymptotic rate, or to numerical
difficulties in the CBC construction at higher interlacing factors. Such
difficulties in practical CBC constructions have also been reported in the
literature; see, e.g.,
\cite{GantnerSchwabComputationalQMC,kritzer2016projection}.
For every $\alpha$, the four values of $\eps$ produce nearly identical curves,
again confirming numerically the robustness of the error bounds
with respect to $\eps$.
\begin{figure}
    \centering
    \includegraphics[width=\textwidth]{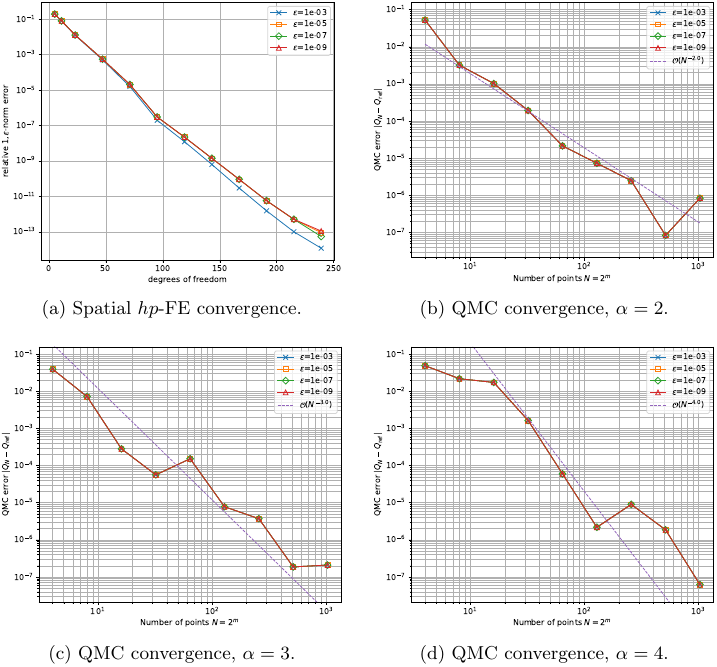}
    \caption{Convergence of the spatial and parametric discretizations
        for the four values of $\eps$.
        Panel (a) shows the relative
        $\norm{\cdot}_{1,\eps,D}$-error versus the number of spatial degrees
        of freedom.
        Panels (b)--(d) show the absolute QMC errors for digit-interlacing
        factors $\alpha=2,3,4$, measured with respect to the corresponding
        reference values computed with $p_{\mathrm{FE,ref}}=96$ and
        $N_{\mathrm{ref}}=4096$.}
    \label{fig:convergence-plot-1}
\end{figure}

\clearpage

\subsection{QMC--\texorpdfstring{$hp$}{hp}-FE convergence}
\label{subsec:num-combined}
Finally, we refine the QMC rule and spatial discretization simultaneously.
For this experiment we use interlacing factors $\alpha=2,3,4$. Balancing the (asymptotic) error contributions
$N^{-\alpha}$ and
$\exp(-\sigma_{\mathrm{est}}p_{\mathrm{FE}}^{1/\mfs_{x}})$ gives the relation
\begin{equation}
    p_{\mathrm{FE}}(N)
    =\left\lceil
    \left(\frac{\alpha \log N}{\sigma_{\mathrm{est}}}\right)^{\mfs_{x}}
    \right\rceil,
    \qquad \sigma_{\mathrm{est}}=0.827.
    \label{eq:num-balanced-schedule}
\end{equation}
For $N=2^m$, $m=2,\ldots,9$, we compute
\begin{equation*}
    Q_{N,s}(F_{\eps,p_{\mathrm{FE}}(N)})
    =\frac1N\sum_{n=0}^{N-1}
    F_{\eps,p_{\mathrm{FE}}(N)}(\widehat{\bm y}_n).
\end{equation*}
A reference $Q_{N_{\mathrm{ref}},s}(F_{\eps,p_{\mathrm{FE,ref}}})$ is evaluated with $N_{\mathrm{ref}}=2^{10}=1024$ and $p_{\mathrm{FE,ref}}=96$.

We use the work model
\begin{equation}
    \operatorname{work}(N,p_{\mathrm{FE}})=N \cdot p_{\mathrm{FE}}^3.
    \label{eq:num-work-model}
\end{equation}
The total errors
\begin{equation*}
    \abs{Q_{N,s}(F_{\eps,p_{\mathrm{FE}}(N)})
        -Q_{N_{\mathrm{ref}},s}(F_{\eps,p_{\mathrm{FE,ref}}})}
\end{equation*}
are plotted versus the work model \eqref{eq:num-work-model} in \cref{fig:convergence-plot-2}.

For comparison, we also include error-versus-work results for the standard Monte Carlo method combined with $hp$-FE discretization. 
We use the same reference values as in the QMC experiments and average the errors over ten independent realizations to reduce the sample-variance of the Monte Carlo estimates. 
The higher-order QMC method is deterministic and therefore requires no such averaging. 
The sample numbers are similarly chosen so as to balance 
the asymptotic error contributions subject to the corresponding work bounds. Due to the robust root-exponential convergence rate of $hp$-FEM, the error of the MC--$hp$-FE approximation is dominated by the rate $1/2$ of MC sampling. 
For each method, we also plot reference curves evaluated at the same work values as the corresponding error curves, with coordinates $(N p_{\mathrm{FE}}^3,N^{-\alpha})$ for QMC and $(N p_{\mathrm{FE}}^3,N^{-1/2})$ for Monte Carlo.

The QMC--$hp$-FE approximations show substantially faster error decay 
with respect to work than the standard MC--$hp$-FE method, 
consistent with the theoretical convergence rates. 
For each of $\alpha=2,3,4$, 
the four values of $\eps$ produce nearly identical curves, 
again indicating robustness with respect to $\eps$.

\begin{figure}
    \centering
    \includegraphics[width=\textwidth]{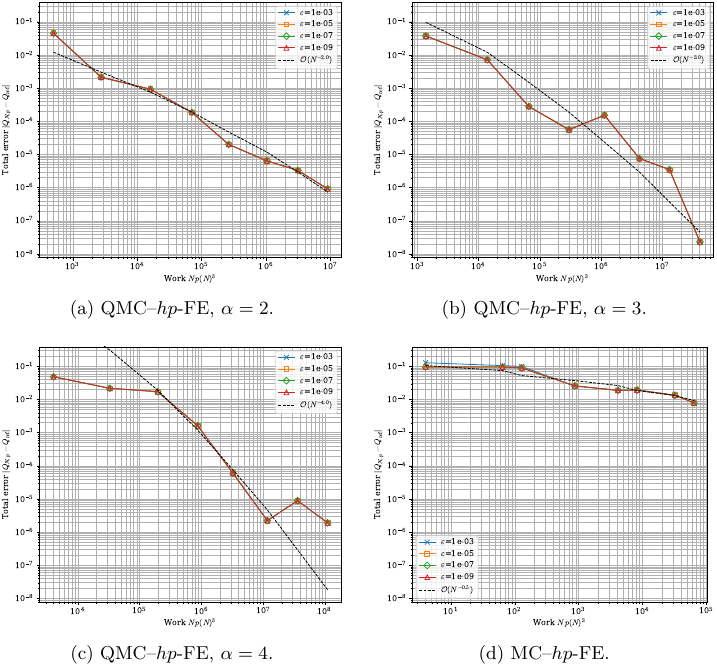}
    \caption{Combined QMC--$hp$-FE error versus the work model
        $Np_{\mathrm{FE}}^3$, with $N$ and $p_{\mathrm{FE}}$ chosen according to
        \eqref{eq:num-balanced-schedule} to balance the QMC and $hp$-FE error
        contributions, for digit-interlacing factors
        $\alpha\in\{2,3,4\}$, together with an MC--$hp$-FE baseline.
        The corresponding reference values are computed with
        $N_{\mathrm{ref}}=1024$ QMC points and
        $p_{\mathrm{FE,ref}}=96$.}
    \label{fig:convergence-plot-2}
\end{figure}
\clearpage
\section{Conclusion}
\label{sec:conclusion}

We analyzed the combined $hp$-FE and Higher Order QMC 
discretization of singularly perturbed
reaction--diffusion problems with uncertain, parametric
reaction coefficient $b$ and fixed source term $f$. 
Under weighted mixed Gevrey-$\mfs_{y}$ regularity assumptions on $b$, with 
anisotropy weights $\bm{\beta} \in \ell^p(\bN)$ for some $0<p<1$ satisfying
$p\leq1/\mathfrak{s}_y$, and under spatial Gevrey-$\mathfrak{s}_x$ regularity
assumptions on $b$ and $f$, 
we proved a robust, i.e., independent of the singular perturbation
parameter $0<\eps\leq 1$,
root-exponential convergence rate of the $hp$-FEM.
We also established an $\eps$-uniform dimension truncation error bound
for the parametric solution. 
The algebraic convergence rate $1/p$ of HoQMC is not subject to the CoD,
and is likewise uniform with respect to $\eps$.

\printbibliography

@article{Bakhvalov1969,
    title = {The optimization of methods of solving boundary value problems with a boundary layer},
    journal = {USSR Computational Mathematics and Mathematical Physics},
    volume = {9},
    number = {4},
    pages = {139-166},
    year = {1969},
    issn = {0041-5553},
    doi = {https://doi.org/10.1016/0041-5553(69)90038-X},
    url = {https://www.sciencedirect.com/science/article/pii/004155536990038X},
    author = {N.S. Bakhvalov},
}

@article {Dick2007Explicit,
	AUTHOR = {Dick, Josef},
	TITLE = {Explicit constructions of quasi-{M}onte {C}arlo rules for the
	         numerical integration of high-dimensional periodic functions},
	JOURNAL = {SIAM J. Numer. Anal.},
	FJOURNAL = {SIAM Journal on Numerical Analysis},
	VOLUME = {45},
	YEAR = {2007},
	NUMBER = {5},
	PAGES = {2141--2176},
	ISSN = {0036-1429,1095-7170},
	MRCLASS = {11K38 (11K45 65D30)},
	MRNUMBER = {2346374},
	MRREVIEWER = {Erich\ Novak},
	DOI = {10.1137/060658916},
	URL = {https://doi.org/10.1137/060658916},
}

@article {Dick2008Walsh,
	AUTHOR = {Dick, Josef},
	TITLE = {Walsh spaces containing smooth functions and quasi-{M}onte
	         {C}arlo rules of arbitrary high order},
	JOURNAL = {SIAM J. Numer. Anal.},
	FJOURNAL = {SIAM Journal on Numerical Analysis},
	VOLUME = {46},
	YEAR = {2008},
	NUMBER = {3},
	PAGES = {1519--1553},
	ISSN = {0036-1429,1095-7170},
	MRCLASS = {42C10 (11K38 11K45 65C05)},
	MRNUMBER = {2391005},
	MRREVIEWER = {Friedrich\ Pillichshammer},
	DOI = {10.1137/060666639},
	URL = {https://doi.org/10.1137/060666639},
}

@article {dick2014higher,
	AUTHOR = {Dick, Josef and Kuo, Frances Y. and Le Gia, Quoc T. and Nuyens, Dirk and Schwab, Christoph},
	TITLE = {Higher order {QMC} {P}etrov-{G}alerkin discretization for
	affine parametric operator equations with random field inputs},
	JOURNAL = {SIAM J. Numer. Anal.},
	FJOURNAL = {SIAM Journal on Numerical Analysis},
	VOLUME = {52},
	YEAR = {2014},
	NUMBER = {6},
	PAGES = {2676--2702},
	ISSN = {0036-1429,1095-7170},
	MRCLASS = {65D30 (65C05 65N30)},
	MRNUMBER = {3276428},
	MRREVIEWER = {Petru\ P.\ Blaga},
	DOI = {10.1137/130943984},
	URL = {https://doi.org/10.1137/130943984},
}

@article {dick2016multilevel,
	AUTHOR = {Dick, Josef and Kuo, Frances Y. and Le Gia, Quoc T. and Schwab, Christoph},
	TITLE = {Multilevel higher order {QMC} {P}etrov-{G}alerkin
	         discretization for affine parametric operator equations},
	JOURNAL = {SIAM J. Numer. Anal.},
	FJOURNAL = {SIAM Journal on Numerical Analysis},
	VOLUME = {54},
	YEAR = {2016},
	NUMBER = {4},
	PAGES = {2541--2568},
	ISSN = {0036-1429,1095-7170},
	MRCLASS = {65C05 (65D30 65N30 65N75)},
	MRNUMBER = {3537887},
	MRREVIEWER = {Bamdad\ Hosseini},
	DOI = {10.1137/16M1078690},
	URL = {https://doi.org/10.1137/16M1078690},
}

@article {dick2016holomorphic,
	AUTHOR = {Dick, Josef and Le Gia, Quoc T. and Schwab, Christoph},
	TITLE = {Higher order quasi-{M}onte {C}arlo integration for
	         holomorphic, parametric operator equations},
	JOURNAL = {SIAM/ASA J. Uncertain. Quantif.},
	FJOURNAL = {SIAM/ASA Journal on Uncertainty Quantification},
	VOLUME = {4},
	YEAR = {2016},
	NUMBER = {1},
	PAGES = {48--79},
	ISSN = {2166-2525},
	MRCLASS = {65D30 (65C05)},
	MRNUMBER = {3659685},
	MRREVIEWER = {Jiu\ Ding},
	DOI = {10.1137/140985913},
	URL = {https://doi.org/10.1137/140985913},
}

@book {dick2010digital,
	AUTHOR = {Dick, Josef and Pillichshammer, Friedrich},
	TITLE = {Digital nets and sequences},
	NOTE = {Discrepancy theory and quasi-Monte Carlo integration},
	PUBLISHER = {Cambridge University Press, Cambridge},
	YEAR = {2010},
	PAGES = {xviii+600},
	ISBN = {978-0-521-19159-3},
	MRCLASS = {65C05 (11K06 11K38 11K45 46E35 65D30)},
	MRNUMBER = {2683394},
	MRREVIEWER = {Petru\ P.\ Blaga},
	DOI = {10.1017/CBO9780511761188},
	URL = {https://doi.org/10.1017/CBO9780511761188},
}

@incollection {gantner2018dimension,
	AUTHOR = {Gantner, Robert N.},
	TITLE = {Dimension truncation in {QMC} for affine-parametric operator equations},
	BOOKTITLE = {Monte {C}arlo and quasi--{M}onte {C}arlo methods},
	SERIES = {Springer Proc. Math. Stat.},
	VOLUME = {241},
	PAGES = {249--264},
	PUBLISHER = {Springer, Cham},
	YEAR = {2018},
	ISBN = {978-3-319-91436-7; 978-3-319-91435-0},
	MRCLASS = {65C05},
	MRNUMBER = {3828144},
	DOI = {10.1007/978-3-319-91436-7\_13},
	URL = {https://doi.org/10.1007/978-3-319-91436-7_13},
}

@incollection {GantnerSchwabComputationalQMC,
	AUTHOR = {Gantner, Robert N. and Schwab, Christoph},
	TITLE = {Computational higher order quasi-{M}onte {C}arlo integration},
	BOOKTITLE = {Monte {C}arlo and quasi-{M}onte {C}arlo methods},
	SERIES = {Springer Proc. Math. Stat.},
	VOLUME = {163},
	PAGES = {271--288},
	PUBLISHER = {Springer, [Cham]},
	YEAR = {2016},
	ISBN = {978-3-319-33507-0; 978-3-319-33505-6},
	MRCLASS = {65D30 (65C05)},
	MRNUMBER = {3536664},
	DOI = {10.1007/978-3-319-33507-0\_12},
	URL = {https://doi.org/10.1007/978-3-319-33507-0_12},
}

@incollection {guth2022application,
	AUTHOR = {Guth, Philipp A. and Kaarnioja, Vesa},
	TITLE = {Application of dimension truncation error analysis to
	high-dimensional function approximation in uncertainty quantification},
	BOOKTITLE = {Monte {C}arlo and quasi-{M}onte {C}arlo methods},
	SERIES = {Springer Proc. Math. Stat.},
	VOLUME = {460},
	PAGES = {297--312},
	PUBLISHER = {Springer, Cham},
	YEAR = {2024},
	ISBN = {978-3-031-59761-9; 978-3-031-59762-6},
	MRCLASS = {65D40 (65C20)},
	MRNUMBER = {4774926},
	DOI = {10.1007/978-3-031-59762-6\_14},
	URL = {https://doi.org/10.1007/978-3-031-59762-6_14},
}

@article {guth2024lognormal,
	AUTHOR = {Guth, Philipp A. and Kaarnioja, Vesa},
	TITLE = {Generalized dimension truncation error analysis for
	high-dimensional numerical integration: lognormal setting and beyond},
	JOURNAL = {SIAM J. Numer. Anal.},
	FJOURNAL = {SIAM Journal on Numerical Analysis},
	VOLUME = {62},
	YEAR = {2024},
	NUMBER = {2},
	PAGES = {872--892},
	ISSN = {0036-1429,1095-7170},
	MRCLASS = {65D32 (35J15 35R60 65D30)},
	MRNUMBER = {4724084},
	DOI = {10.1137/23M1593188},
	URL = {https://doi.org/10.1137/23M1593188},
}

@article {kuo2016survey,
	AUTHOR = {Kuo, Frances Y. and Nuyens, Dirk},
	TITLE = {Application of quasi-{M}onte {C}arlo methods to elliptic
	{PDE}s with random diffusion coefficients: a survey of analysis and implementation},
	JOURNAL = {Found. Comput. Math.},
	FJOURNAL = {Foundations of Computational Mathematics. The Journal of the
	Society for the Foundations of Computational Mathematics},
	VOLUME = {16},
	YEAR = {2016},
	NUMBER = {6},
	PAGES = {1631--1696},
	ISSN = {1615-3375,1615-3383},
	MRCLASS = {65C05 (60H35 65N30 65N75)},
	MRNUMBER = {3579719},
	MRREVIEWER = {Romeo\ I.\ Negrea},
	DOI = {10.1007/s10208-016-9329-5},
	URL = {https://doi.org/10.1007/s10208-016-9329-5},
}

@article {kuo2012QMC,
	AUTHOR = {Kuo, Frances Y. and Schwab, Christoph and Sloan, Ian H.},
	TITLE = {Quasi-{M}onte {C}arlo finite element methods for a class of
	elliptic partial differential equations with random coefficients},
	JOURNAL = {SIAM J. Numer. Anal.},
	FJOURNAL = {SIAM Journal on Numerical Analysis},
	VOLUME = {50},
	YEAR = {2012},
	NUMBER = {6},
	PAGES = {3351--3374},
	ISSN = {0036-1429,1095-7170},
	MRCLASS = {65N30 (60H25 65C05 65N75)},
	MRNUMBER = {3024159},
	MRREVIEWER = {R\'emi\ Vaillancourt},
	DOI = {10.1137/110845537},
	URL = {https://doi.org/10.1137/110845537},
}

@article {melenk1997boundarylayer,
	AUTHOR = {Melenk, Jens Markus},
	TITLE = {On the robust exponential convergence of {$hp$} finite element
	method for problems with boundary layers},
	JOURNAL = {IMA J. Numer. Anal.},
	FJOURNAL = {IMA Journal of Numerical Analysis},
	VOLUME = {17},
	YEAR = {1997},
	NUMBER = {4},
	PAGES = {577--601},
	ISSN = {0272-4979,1464-3642},
	MRCLASS = {65N30 (65N12)},
	MRNUMBER = {1476340},
	MRREVIEWER = {Bernard\ Brighi},
	DOI = {10.1093/imanum/17.4.577},
	URL = {https://doi.org/10.1093/imanum/17.4.577},
}

@misc{Nuyens2024QMC4PDE,
	author       = {Dirk Nuyens},
	title        = {QMC4PDE: Quasi-Monte Carlo for PDEs},
	year         = {2024},
	howpublished = {\url{https://people.cs.kuleuven.be/~dirk.nuyens/qmc4pde/}},
	note         = {Version dated 10 December 2024}
}

@report{RS26_1180,
  author = {F. Rohner and C. Schwab},
  title = {Higher-Order QMC Galerkin FEM for Singular Perturbation:
Reaction--Diffusion with Affine-Parametric Reaction},
  institution = {Seminar for Applied Mathematics, ETH Z{\"u}rich},
  number = {2026-25},
  address = {Switzerland},
  url = {https://www.sam.math.ethz.ch/sam_reports/reports_final/reports2026/2026-25.pdf },
  year = {2026}
}

@report{ManuskriptDLBL,
	author = {F. Rohner and Ch. Schwab and C. Xenophontos},
	title = {Neural Networks for Singular Perturbations - Finite Regularity},
	institution = {Seminar for Applied Mathematics, ETH Z{\"u}rich},
	number = {2026-17},
	address = {Switzerland},
	url = {https://www.sam.math.ethz.ch/sam_reports/reports_final/reports2026/2026-17.pdf },
	year = {2026}
}

@book{RST2nd,
	AUTHOR = {Roos, Hans-G\"{o}rg and Stynes, Martin and Tobiska, Lutz},
	TITLE = {Robust numerical methods for singularly perturbed differential equations},
	SERIES = {Springer Series in Computational Mathematics},
	VOLUME = {24},
	EDITION = {Second Ed.},
	NOTE = {Convection-diffusion-reaction and flow problems},
	PUBLISHER = {Springer-Verlag, Berlin},
	YEAR = {2008},
	PAGES = {xiv+604},
	ISBN = {978-3-540-34466-7},
	MRCLASS = {65-02 (35B25 65Lxx 65Mxx 65Nxx 76M10 76V05)},
	MRNUMBER = {2454024},
	MRREVIEWER = {Eugene O'Riordan},
}

@article {Shishkin1989,
    AUTHOR = {Shishkin, G. I.},
    TITLE = {Grid approximation of singularly perturbed boundary value
    problems with a regular boundary layer},
    JOURNAL = {Soviet J. Numer. Anal. Math. Modelling},
    FJOURNAL = {Soviet Journal of Numerical Analysis and Mathematical
    Modelling},
    VOLUME = {4},
    YEAR = {1989},
    NUMBER = {5},
    PAGES = {397--417},
    ISSN = {0169-2895},
    MRCLASS = {65N99},
    MRNUMBER = {1026911},
    MRREVIEWER = {V.\ F.\ Butuzov},
}

@article {chernov2025analytic,
	AUTHOR = {Chernov, Alexey and L\^e, T\`ung},
	TITLE = {Analytic and {G}evrey class regularity for parametric
	semilinear reaction-diffusion problems and applications in
	uncertainty quantification},
	JOURNAL = {Comput. Math. Appl.},
	FJOURNAL = {Computers \& Mathematics with Applications. An International
	Journal},
	VOLUME = {164},
	YEAR = {2024},
	PAGES = {116--130},
	ISSN = {0898-1221,1873-7668},
	MRCLASS = {65N30 (35B65 35K57 65C05)},
	MRNUMBER = {4736610},
	DOI = {10.1016/j.camwa.2024.04.007},
	URL = {https://doi.org/10.1016/j.camwa.2024.04.007},
}

@article{chernov2026localgevrey,
	title={Local Gevrey regularity and Quasi-Monte Carlo quadrature for PDEs parameterized on non-compact domains}, 
	author={Dinh Dũng and Tùng Lê and Alexey Chernov},
	year={2026},
	eprint={2606.30972},
	archivePrefix={arXiv},
	primaryClass={math.NA},
	url={https://arxiv.org/abs/2606.30972}, 
}

@article {feischl2020exponential,
	AUTHOR = {Feischl, M. and Schwab, Ch.},
	TITLE = {Exponential convergence in {$H^1$} of {$hp$}-{FEM} for
	{G}evrey regularity with isotropic singularities},
	JOURNAL = {Numer. Math.},
	FJOURNAL = {Numerische Mathematik},
	VOLUME = {144},
	YEAR = {2020},
	NUMBER = {2},
	PAGES = {323--346},
	ISSN = {0029-599X,0945-3245},
	MRCLASS = {65N30},
	MRNUMBER = {4057425},
	MRREVIEWER = {Ariel\ L.\ Lombardi},
	DOI = {10.1007/s00211-019-01085-z},
	URL = {https://doi.org/10.1007/s00211-019-01085-z},
}

@article {costabel2005maxwell,
	AUTHOR = {Costabel, Martin and Dauge, Monique and Schwab, Christoph},
	TITLE = {Exponential convergence of {$hp$}-{FEM} for {M}axwell
	equations with weighted regularization in polygonal domains},
	JOURNAL = {Math. Models Methods Appl. Sci.},
	FJOURNAL = {Mathematical Models and Methods in Applied Sciences},
	VOLUME = {15},
	YEAR = {2005},
	NUMBER = {4},
	PAGES = {575--622},
	ISSN = {0218-2025,1793-6314},
	MRCLASS = {78M10 (65N30)},
	MRNUMBER = {2137526},
	MRREVIEWER = {Snorre\ H.\ Christiansen},
	DOI = {10.1142/S0218202505000480},
	URL = {https://doi.org/10.1142/S0218202505000480},
}

@article {sundermann1983lebesgue,
	AUTHOR = {S\"undermann, Burkhard},
	TITLE = {Lebesgue constants in {L}agrangian interpolation at the {F}ekete points},
	JOURNAL = {Mitt. Math. Ges. Hamburg},
	FJOURNAL = {Mitteilungen der Mathematischen Gesellschaft in Hamburg},
	VOLUME = {11},
	YEAR = {1983},
	NUMBER = {2},
	PAGES = {204--211},
	ISSN = {0340-4358},
	MRCLASS = {41A05},
	MRNUMBER = {744527},
	MRREVIEWER = {Ambikeswar\ Sharma},
}

@article{ScSu96,
    AUTHOR = {Schwab, Christoph and Suri, Manil},
     TITLE = {The {$p$} and {$hp$} versions of the finite element method for problems with boundary layers},
   JOURNAL = {Math. Comp.},
  FJOURNAL = {Mathematics of Computation},
    VOLUME = {65},
      YEAR = {1996},
    NUMBER = {216},
     PAGES = {1403--1429},
      ISSN = {0025-5718,1088-6842},
   MRCLASS = {65L60 (65N30)},
  MRNUMBER = {1370857},
MRREVIEWER = {Josef\ Dal\'{\i}k},
       DOI = {10.1090/S0025-5718-96-00781-8},
       URL = {https://doi.org/10.1090/S0025-5718-96-00781-8},
}

@article {kritzer2016projection,
	AUTHOR = {Dick, Josef and Kritzer, Peter},
	TITLE = {On a projection-corrected component-by-component construction},
	JOURNAL = {J. Complexity},
	FJOURNAL = {Journal of Complexity},
	VOLUME = {32},
	YEAR = {2016},
	NUMBER = {1},
	PAGES = {74--80},
	ISSN = {0885-064X,1090-2708},
	MRCLASS = {65D30 (65C05)},
	MRNUMBER = {3424638},
	MRREVIEWER = {S.\ Hitotumatu},
	DOI = {10.1016/j.jco.2015.08.001},
	URL = {https://doi.org/10.1016/j.jco.2015.08.001},
}

\appendix
\section{Robust (sub)-exponential convergence of the \texorpdfstring{$hp$}{hp}-FEM
    for singularly perturbed, elliptic BVPs under Gevrey regularity}
\label{sec:ExpCnv}
This appendix extends the robust exponential convergence analysis of 
\cite{melenk1997boundarylayer} from analytic to $\mfs_{x}$-Gevrey input data, 
i.e. to $\mfs_{x} > 1$, where we show $\eps$-robust, root-exponential rates.
We follow \cite{melenk1997boundarylayer} closely, 
both in the organization of the material and in the notation, 
so that the results in \cite{melenk1997boundarylayer} 
are recovered for $\mfs_{x}=1$.

In order to keep the exposition concise, 
\emph{we only detail those arguments
    which differ mathematically from}
the corresponding proof in \cite{melenk1997boundarylayer}. 
All remaining statements are proved exactly as
their analytic counterparts in \cite{melenk1997boundarylayer}, 
upon the two routine substitutions
\begin{itemize}
    \item replacing the analytic factorial growth $n!$ by its $\mfs_{x}$-Gevrey
    analogue $(n!)^{\mfs_{x}}$ (equivalently, a bound $q^{p}$ on a rate is replaced
    by $q^{p^{1/\mfs_{x}}}$), and
    \item replacing every appeal to an analytic auxiliary result of
    \cite{melenk1997boundarylayer} by its Gevrey analogue stated below.
\end{itemize}
For such statements we only indicate the corresponding result in
\cite{melenk1997boundarylayer}, and we do not reproduce the proof.
\subsection{Model Problem}
\label{sec:ModPrb}
This appendix concerns the $p$- and $hp$-version finite element approximation of
the singularly perturbed model boundary value problem
\begin{equation}\label{eq:appendix-problem}
    \begin{aligned}
        L_{\eps} u_{\eps} \coloneqq -\eps^2 u_\eps'' + b(x) u_\eps
        &=f, \quad \text{on } \Omega \coloneqq (-1,1),\\
        u_\eps(\pm 1)
        &=\alpha^{\pm} \in \mathbb{R}.
    \end{aligned}
\end{equation}
Here $\mfs_x\geq1$ and
$f,b\in\cG^{\mfs_x}(\overline\Omega)$ in the sense of
\eqref{eq:gevrey-class-finite-dimensional}.  The reaction coefficient is
assumed to satisfy $b(x)\geq\underline b^2>0$ on $\overline\Omega$, and
$\eps\in(0,1]$ is the singular perturbation parameter, which may be
arbitrarily close to zero.  We fix constants
$C_b,\gamma_b,C_f,\gamma_f>0$ quantifying the Gevrey regularity such that
\begin{align}
    \norm{b^{(n)}}_{L^{\infty}(\Omega)} &\le C_{b} \gamma_{b}^n (n!)^{\mfs_{x}} \qquad \forall n \in \mathbb{N}_0,\\
    \norm{f^{(n)}}_{L^{\infty}(\Omega)} &\le C_{f} \gamma_f^n (n!)^{\mfs_{x}} \qquad \forall n \in \mathbb{N}_0.
\end{align}
It is this regularity that will allow us to control the asymptotic expansion of 
$u_{\eps}$ explicitly in $\eps$ and in the expansion truncation order.

The variational formulation of \eqref{eq:appendix-problem} is
\begin{equation}
    \label{eq:appendix-weak-form}
    \text{find } u_{\eps} \in H^{1}_{D}(\Omega) \text{ such that} \qquad
    B_{\eps}(u_{\eps},v) = F(v) \qquad \forall v \in H^{1}_{0}(\Omega),
\end{equation}
where we set
\begin{equation*}
    B_{\eps}(u,v) = \int_{\Omega} \bigl(\eps^2 u' v' + b(x) u v\bigr) \dif x,
    \qquad
    F(v) = \int_{\Omega} f v \dif x,
\end{equation*}
and where
\begin{equation*}
    H^{1}_{D}(\Omega) \coloneqq \{u \in H^{1}(\Omega) : u(-1) = \alpha^{-},\, u(1) = \alpha^{+}\},
    \quad 
    H^{1}_{0}(\Omega) \coloneqq \{u \in H^{1}(\Omega) : u(\pm 1) = 0\}.
\end{equation*}
As usual, $H^{1}(\Omega)$ is the Sobolev space of $L^2(\Omega)$-functions 
whose distributional derivative is again square integrable.
The bilinear form $B_{\eps}$ induces on $H^{1}(\Omega)$ the energy norm
\begin{equation}
    \label{eq:appendix-energy-norm}
    \norm{u}_{\eps} \coloneqq \bigl(B_{\eps}(u,u)\bigr)^{1/2}
    = \left(\eps^2 \norm{u'}_{L^2(\Omega)}^2 + \int_{\Omega} b\, u^2 \dif x\right)^{1/2}.
\end{equation}
In what follows we instead work with the equivalent $1,\eps$-norm introduced in \eqref{eq:energy-norm}.
Since $\underline{b}^2 \le b(x) \le \norm{b}_{L^{\infty}(\Omega)}$ on
$\overline{\Omega}$, \eqref{eq:appendix-energy-norm} and
\eqref{eq:energy-norm} are equivalent, \emph{uniformly in $\eps$}: 
\begin{equation}
    \label{eq:appendix-norm-equiv}
    \min\{1,\underline{b}\}\, \norm{u}_{1,\eps}
    \le \norm{u}_{\eps}
    \le \max\{1,\norm{b}_{L^{\infty}(\Omega)}^{1/2}\}\, \norm{u}_{1,\eps},
    \qquad u \in H^{1}(\Omega).
\end{equation}
For the solution $u_{\eps}$ of \eqref{eq:appendix-problem} one has the a priori estimate
\begin{equation*}
    \norm{u_{\eps}}_{1,\eps}
    \le \frac{1}{\min\{1,\underline{b}\}}
    \left(\underline{b}^{-1} \norm{f}_{L^2(\Omega)} + C\bigl(\abs{\alpha^{-}} + \abs{\alpha^{+}}\bigr)\right).
\end{equation*}

For the discretization we fix a finite-dimensional space $S_{0}
\subset H^{1}_{0}(\Omega)$ with $\dim S_{0} = N$ and a lifting $u_{0} \in
H^{1}_{D}(\Omega)$ of the boundary data, and replace $H^{1}_{D}(\Omega)$ and
$H^{1}_{0}(\Omega)$ by the affine space $S_{D} \coloneqq u_{0} + S_{0} \subset
H^{1}_{D}(\Omega)$ and by $S_{0}$, respectively.
The discrete solution $u_{FE} \in S_{D}$ is then characterized by
\begin{equation}
    \label{eq:appendix-discrete-problem}
    B_{\eps}(u_{FE},v) = F(v) \qquad \forall v \in S_{0}.
\end{equation}
By Galerkin orthogonality, the finite element solution $u_{FE}$
is the best approximant of the exact solution $u_{\eps}$ in the energy norm,
i.e.,
\begin{equation}
    \label{eq:appendix-quasi-optimality}
    \norm{u_{\eps} - u_{FE}}_{\eps} \le \inf_{v \in S_{D}} \norm{u_{\eps} - v}_{\eps}.
\end{equation}
Equivalently, by the norm equivalence \eqref{eq:appendix-norm-equiv}, one has
\begin{equation*}
    \norm{u_{\eps} - u_{FE}}_{1,\eps}
    \le \frac{\max\{1,\norm{b}_{L^{\infty}(\Omega)}^{1/2}\}}{\min\{1,\underline{b}\}}
    \inf_{v \in S_{D}} \norm{u_{\eps} - v}_{1,\eps}.
\end{equation*}

\subsection{Regularity of solutions}
In order to identify spaces $S_{D}$ in which the exact solution $u_{\eps}$ of
\eqref{eq:appendix-problem} can be approximated well, we first characterize the behavior of $u_{\eps}$ precisely. 
In this section we establish the necessary $\eps$-robust regularity estimates.
\begin{theorem}[{\cite[Theorem~1]{melenk1997boundarylayer}}]
    \label{thm:appendix-deriv-bounds}
    Let $u_{\eps}$ be the solution of \eqref{eq:appendix-problem}. Then there are constants $C,K > 0$ depending only on the right-hand side $f$, the coefficient $b$, and the boundary data $\alpha^{\pm}$ such that
    \begin{equation}
        \label{eq:appendix-deriv-bounds-ueps}
        \norm{u_{\eps}^{(n)}}_{L^2(\Omega)} 
        \le C K^n \max\{n^{\mfs_{x}} , \eps^{-1}\}^{n}, 
        \qquad \forall n \in \mathbb{N}_{0}. 
    \end{equation}
\end{theorem}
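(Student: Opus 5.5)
We prove the bound \eqref{eq:appendix-deriv-bounds-ueps} by induction on $n$, working directly with the ODE $\eps^2 u_\eps''=b u_\eps-f$. The cases $n=0,1$ come from the energy estimate. Testing the weak form with $u_\eps-u_0$, where $u_0$ is a linear lifting of the boundary data, gives $\norm{u_\eps}_{L^2}+\eps\norm{u_\eps'}_{L^2}\le C$. Hence $\norm{u_\eps'}_{L^2}\le C\eps^{-1}$, which is of the form $CK\max\{1,\eps^{-1}\}$.

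For $n\ge 2$ we differentiate the equation $n-2$ times and use Leibniz' rule:
\begin{equation*}
    \eps^2 u_\eps^{(n)}=\sum_{k=0}^{n-2}\binom{n-2}{k} b^{(k)}u_\eps^{(n-2-k)}-f^{(n-2)}.
\end{equation*}
Taking $L^2$-norms, inserting the Gevrey bounds on $b,f$ and the induction hypothesis gives
\begin{equation*}
    \eps^2\norm{u_\eps^{(n)}}_{L^2}\le \sum_{k=0}^{n-2}\binom{n-2}{k}C_b\gamma_b^k(k!)^{\mfs_x}\,CK^{n-2-k}M_{n-2-k}^{\,n-2-k}+C_f\gamma_f^{n-2}((n-2)!)^{\mfs_x},
\end{equation*}
with $M_m:=\max\{m^{\mfs_x},\eps^{-1}\}$. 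The target is $CK^nM_n^n$, so after dividing by $\eps^2$ we need the right-hand side to be at most $CK^n\eps^2M_n^n$.

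We would split into two regimes. If $\eps^{-1}\ge n^{\mfs_x}$, then $M_n=\eps^{-1}$ and $\eps^2M_n^n=M_n^{n-2}$. Each summand is then controlled using $(k!)^{\mfs_x}\le k^{k\mfs_x}\le M_n^{k}$ and $M_{n-2-k}\le M_n$. The binomial sum is absorbed by $\sum_k\binom{n-2}{k}(\gamma_b/K)^k\le(1+\gamma_b/K)^{n}$, which works with the factor $K^{-2}$ once $K$ is chosen large relative to $\gamma_b,\gamma_f,C_b,C_f$. If instead $n^{\mfs_x}>\eps^{-1}$, dividing by $\eps^2$ costs a factor $\eps^{-2}<n^{2\mfs_x}$. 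The same bookkeeping then gives $K^{n-2}n^{2\mfs_x}n^{\mfs_x(n-2)}\le K^{n-2}n^{\mfs_x n}$, as required.

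The main obstacle is making the constants close uniformly in $n$ without losing powers of $n$, in particular the binomial factors combined with $(k!)^{\mfs_x}$ and the intermediate $M_m$. To handle this, we use the estimate $\binom{n-2}{k}(k!)^{\mfs_x}m^{\mfs_x m}\le n^{\mfs_x(k+m)}$ for $m=n-2-k$, which is valid since $\mfs_x\ge1$, together with the geometric sum $\sum_k(\gamma_b/K)^k$. If this pointwise induction in $L^2$ turns out not to close, we would fall back on the approach of \cite{melenk1997boundarylayer}. That is, we prove the estimate for $L^\infty$-norms on nested subintervals, using a Cauchy-type local argument with shrinking intervals of length $\sim 1/n$ near the boundary, where the $(n!)^{\mfs_x}$ growth replaces $n!$ throughout. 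In either route the boundary data enter only through the $n=0,1$ estimates, and these are absorbed into $C$.
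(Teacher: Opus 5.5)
Your route is the one the paper intends. The appendix gives no separate proof and defers to Melenk's Theorem~1 with $n!$ replaced by $(n!)^{\mfs_x}$: an energy estimate for $n=0,1$, then induction on the differentiated equation $\eps^2u_\eps^{(n)}=(bu_\eps)^{(n-2)}-f^{(n-2)}$. Your base cases are fine. One step, however, fails as written. In the regime $\eps^{-1}\ge n^{\mfs_x}$ you bound $(k!)^{\mfs_x}$ alone by $M_n^k$ and then sum the binomial coefficients to $(1+\gamma_b/K)^{n}$. That factor is unbounded in $n$, so it cannot be absorbed by the fixed factor $K^{2}$. Closing the induction would require $C_b(1+\gamma_b/K)^{n-2}\le K^2$ for every $n$, which no fixed $K$ achieves. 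The same objection applies to your second regime, since it reuses this bookkeeping.

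The fix is the estimate in your ``main obstacle'' paragraph. It should \emph{replace} the binomial-sum step, and it is needed without the factor $m^{\mfs_x m}$, because $M_{n-2-k}$ may equal $\eps^{-1}$. Absorb the binomial coefficient into the factorial:
\[
\binom{n-2}{k}(k!)^{\mfs_x}=\frac{(n-2)!}{(n-2-k)!}\,(k!)^{\mfs_x-1}\le (n-2)^{k}(n-2)^{k(\mfs_x-1)}\le M_n^{k},\qquad M_n=\max\{n^{\mfs_x},\eps^{-1}\}.
\]
Together with $M_{n-2-k}\le M_n$, each summand is then at most $CC_bK^{n-2}M_n^{n-2}(\gamma_b/K)^k$. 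Only the geometric sum $\sum_k(\gamma_b/K)^k\le K/(K-\gamma_b)$ remains.

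Since $M_n\ge\eps^{-1}$ always, $\eps^{-2}\le M_n^2$, and no case split is needed. For the $f$-term use $((n-2)!)^{\mfs_x}\le M_n^{n-2}$ and $\norm{f^{(n-2)}}_{L^2}\le\sqrt2\norm{f^{(n-2)}}_{L^\infty}$. This gives
\[
\norm{u_\eps^{(n)}}_{L^2(\Omega)}\le K^nM_n^n\Bigl(\frac{CC_b}{K(K-\gamma_b)}+\sqrt2\,C_f\gamma_f^{n-2}K^{-n}\Bigr),
\]
which is at most $CK^nM_n^n$ once $K\ge\max\{2\gamma_b,\gamma_f,1\}$, $K^2\ge4C_b$ and $C\ge2\sqrt2\,C_f$. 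With this correction the induction closes directly in $L^2$, so the fallback to nested subintervals is unnecessary.
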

The estimate of \Cref{thm:appendix-deriv-bounds} captures the global growth of
the derivatives but reveals little about the boundary layers themselves.

Their behavior can be characterized by the classical asymptotic expansion of the solution of
\eqref{eq:appendix-problem}: for every $M \in \mathbb{N}_{0}$ the solution
$u_{\eps}$ can be decomposed into a smooth asymptotic part, two boundary layer terms, and a
remainder,
\begin{equation}
    \label{eq:appendix-decomposition}
    u_{\eps} = w_{M} + A_{M}^{-}u_{\eps}^{-} + A_{M}^{+} u_{\eps}^{+} + r_{M}.
\end{equation}
Its construction is classical. Substituting the formal series
$u_{\eps} \sim \sum_{j=0}^{\infty} \eps^{2j} u_{2j}$ into
\eqref{eq:appendix-problem} and matching equal powers of $\eps$ yields the
smooth part $w_{M}$ as the partial sum
\begin{equation*}
    w_M(x) \coloneqq \sum_{j=0}^{M} \eps^{2j} u_{2j},
\end{equation*}
where the terms $u_{2j}$ are defined recursively by
\begin{equation*}
    u_0(x) = \frac{f(x)}{b(x)}, \qquad u_{j+2}(x) \coloneqq \frac{1}{b(x)} u_{j}''(x), \qquad j = 0,2,4,\dots.
\end{equation*}

A direct computation gives $L_{\eps}(u_{\eps} - w_{M}) = \eps^{2M+2} u_{2M}''$,
which vanishes as $\eps \to 0$ for each fixed $M$. Thus $w_{M}$ solves the
differential equation in the asymptotic limit but in general fails to satisfy the
boundary conditions. This defect at the endpoints can be compensated by introducing two boundary
layer functions $u_{\eps}^{-}, u_{\eps}^{+}$, defined as the solutions of
\begin{equation}
    \label{eq:appendix-def-bl-terms-decomp}
    \begin{rcases}
        L_{\eps} u_{\eps}^- = 0 \quad \text{on } \Omega,\\
        u_{\eps}^-(-1) = 1,\, u_{\eps}^-(1)=0
    \end{rcases}
    \qquad
    \begin{rcases}
        L_{\eps} u_{\eps}^+ = 0 \quad \text{on } \Omega,\\
        u_{\eps}^+(-1) = 0,\, u_{\eps}^+(1)=1.
    \end{rcases}
\end{equation}
With the choice
\begin{equation*}
    A_{M}^{-} \coloneqq \alpha^{-} - w_{M}(-1), \qquad A_{M}^{+} \coloneqq \alpha^{+} - w_{M}(1),
\end{equation*}
the sum $w_{M} + A_{M}^{-} u_{\eps}^{-} + A_{M}^{+} u_{\eps}^{+}$ satisfies the
prescribed boundary values while still obeying the same differential equation as
$w_{M}$. The remainder $r_{M}$ is then fixed by requiring the
decomposition \eqref{eq:appendix-decomposition} to hold exactly, i.e. as the solution of
\begin{equation*}
    L_{\eps} r_{M} = \eps^{2M+2} u_{2M}'' \qquad \text{on } \Omega, \qquad r_{M}(\pm 1) = 0.
\end{equation*}
We now estimate each of the four terms in
\eqref{eq:appendix-decomposition} separately.

The following Gevrey recursion estimate replaces the holomorphic Cauchy
estimates of \cite[Lemma~2]{melenk1997boundarylayer}.
\begin{lemma}\label{lem:gevrey-recursion}
    Let $\mfs_{x} \ge 1$ and let $B,u_0 \in \cG^{\mfs_{x}}(\overline{\Omega})$ satisfy
    \begin{equation*}
        \norm{B^{(n)}}_{L^{\infty}(\Omega)} \leq C_B \gamma_B^n (n!)^{\mfs_{x}},
        \qquad
        \norm{u_0^{(n)}}_{L^{\infty}(\Omega)} \leq C_0 \gamma_0^n (n!)^{\mfs_{x}},
        \qquad n \in \mathbb{N}_0.
    \end{equation*}
    Define the functions $u_j$ recursively by
    \begin{equation*}
        u_{j+2}(x) \coloneqq B(x)u_j''(x), \qquad j=0,2,4,\dots.
    \end{equation*}
    Then there are constants $K_1,K_2>0$, depending only on
    $\mfs_{x},C_B,\gamma_B$, and $\gamma_0$, such that
    \begin{equation*}
        \norm{u_j^{(n)}}_{L^\infty(\Omega)} \le C_0 K_1^j K_2^n (j!)^{\mfs_{x}} (n!)^{\mfs_{x}}, \qquad j=0,2,4,\dots, \quad n \in \mathbb{N}_0.
    \end{equation*}
\end{lemma}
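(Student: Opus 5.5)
We prove the bound by induction on $j$, using a strengthened hypothesis. The pure statement does not close under the recursion: differentiating $u_{j+2}=Bu_j''$ $n$ times via Leibniz gives
\begin{equation*}
    u_{j+2}^{(n)}=\sum_{k=0}^{n}\binom{n}{k}B^{(n-k)}u_j^{(k+2)},
\end{equation*}
which calls for $u_j$ derivatives of order up to $n+2$. Feeding in $(n+2)!^{\mfs_x}$ produces a factor $((n+1)(n+2))^{\mfs_x}$. This factor must be absorbed into the growth $((j+2)!)^{\mfs_x}/(j!)^{\mfs_x}=((j+1)(j+2))^{\mfs_x}$ in $j$, which is only possible when the counting is arranged so that total order is the controlling quantity. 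We therefore prove the stronger claim
\begin{equation*}
    \norm{u_j^{(n)}}_{L^\infty(\Omega)}\le C_0\,\widetilde K_1^{\,j}\,\widetilde K_2^{\,n}\,((n+j)!)^{\mfs_x},
    \qquad j=0,2,4,\dots,\quad n\in\bN_0,
\end{equation*}
and then use $(n+j)!\le 2^{n+j}n!\,j!$ to recover the stated form with $K_1=2^{\mfs_x}\widetilde K_1$ and $K_2=2^{\mfs_x}\widetilde K_2$.

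\textbf{Base case.} For $j=0$ the claim is the hypothesis on $u_0$, provided $\widetilde K_2\ge\gamma_0$.

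\textbf{Induction step.} Insert the hypothesis on $B^{(n-k)}$ and the induction hypothesis for $u_j^{(k+2)}$ (total order $k+2+j$) into the Leibniz formula. This gives
\begin{equation*}
    \norm{u_{j+2}^{(n)}}\le C_0C_B\widetilde K_1^j\sum_{k=0}^n\binom nk\gamma_B^{n-k}\widetilde K_2^{k+2}((n-k)!)^{\mfs_x}((k+j+2)!)^{\mfs_x}.
\end{equation*}
To estimate the sum, use $\binom nk\le\binom{n+j+2}{n-k}$ and then $\binom{m}{i}(i!)^{\mfs_x}((m-i)!)^{\mfs_x}\le (m!)^{\mfs_x}$ with $m=n+j+2$, exactly as in the proof of \cref{lem:gevrey-convolution}. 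Each summand is then at most $((n+j+2)!)^{\mfs_x}\gamma_B^{n-k}\widetilde K_2^{k+2}$. Choosing $\widetilde K_2\ge2\gamma_B$ bounds the geometric sum by $2\widetilde K_2^{n+2}$, so
\begin{equation*}
    \norm{u_{j+2}^{(n)}}\le 2C_0C_B\widetilde K_2^{2}\,\widetilde K_1^{j}\,\widetilde K_2^{n}\,((n+j+2)!)^{\mfs_x}.
\end{equation*}
Finally, take $\widetilde K_1^2\ge 2C_B\widetilde K_2^2$. This closes the induction with constants depending only on $\mfs_x,C_B,\gamma_B,\gamma_0$.

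The main obstacle is the derivative loss of two orders per step. A naive induction in the separated form $(j!)^{\mfs_x}(n!)^{\mfs_x}$ fails, because the factor $((n+1)(n+2))^{\mfs_x}$ cannot be absorbed uniformly in $n$. The total-order formulation is what resolves this, so the inequality $\binom nk\le\binom{n+j+2}{n-k}$ and the convolution estimate are the points to check carefully. In particular, the $(n-k)!$ factor pairs with a binomial in $n+j+2$, and this is valid since $n-k\le n+j+2$.
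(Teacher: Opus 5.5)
Your proposal is correct and follows essentially the same route as the paper. You strengthen the claim to the total-order bound $C_0 L^j\gamma^n((n+j)!)^{\mfs_{x}}$ and close the induction with Leibniz, $\binom{n}{k}\le\binom{n+j+2}{\cdot}$, the Gevrey binomial inequality, and a geometric sum with $\gamma\ge 2\gamma_B$ and $L^2\ge 2C_B\gamma^2$; you then split $(n+j)!\le 2^{n+j}n!\,j!$ to obtain $K_1=2^{\mfs_{x}}L$ and $K_2=2^{\mfs_{x}}\gamma$. The only differences are cosmetic: the indexing of the Leibniz sum, and the paper including $1$ in the maximum defining $\gamma$ to ensure positivity.
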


\begin{proof}
    Set
    \begin{equation*}
        \gamma \coloneqq \max\{\gamma_0,2\gamma_B,1\}
    \end{equation*}
    and choose $L>0$ such that
    \begin{equation*}
        L^2 \ge 2 C_B \gamma^2.
    \end{equation*}
    We first prove by induction on $j \in 2\mathbb{N}$ the stronger assertion
    \begin{equation}
        \label{eq:appendix-gevrey-recursion-intermediate}
        \norm{u_j^{(n)}}_{L^{\infty}(\Omega)} \le C_0 L^j \gamma^n \bigl((n+j)!\bigr)^{\mfs_{x}}, \qquad n \in \mathbb{N}_0.
    \end{equation}
    The assertion is immediate for $j=0$. Suppose that
    \eqref{eq:appendix-gevrey-recursion-intermediate} holds for some even $j$.
    Leibniz' formula gives
    \begin{equation*}
        u_{j+2}^{(n)} = \sum_{k=0}^n \binom{n}{k} B^{(k)} u_j^{(n-k+2)}.
    \end{equation*}
    Writing $N \coloneqq n+j+2$ and using the derivative bounds for $B$ and the induction
    hypothesis, we obtain
    \begin{equation*}
        \norm{u_{j+2}^{(n)}}_{L^\infty(\Omega)}
        \le C_BC_0L^j\gamma^{n+2} \sum_{k=0}^n \left(\frac{\gamma_B}{\gamma}\right)^k \binom{n}{k}(k!)^{\mfs_{x}}\bigl((N-k)!\bigr)^{\mfs_{x}}.
    \end{equation*}
    Since $n\leq N$ and $\mfs_{x}\geq1$,
    \begin{equation*}
        \binom{n}{k}(k!)^{\mfs_{x}}\bigl((N-k)!\bigr)^{\mfs_{x}}
        \leq
        \binom{N}{k}(k!)^{\mfs_{x}}\bigl((N-k)!\bigr)^{\mfs_{x}}
        \leq
        (N!)^{\mfs_{x}}.
    \end{equation*}
    Consequently,
    \begin{equation*}
        \norm{u_{j+2}^{(n)}}_{L^\infty(\Omega)}
        \leq
        2C_B\gamma^2C_0L^j\gamma^n
        \bigl((n+j+2)!\bigr)^{\mfs_{x}}
        \leq
        C_0L^{j+2}\gamma^n
        \bigl((n+j+2)!\bigr)^{\mfs_{x}}.
    \end{equation*}
    This proves \eqref{eq:appendix-gevrey-recursion-intermediate}.
    
    Finally,
    \begin{equation*}
        \bigl((n+j)!\bigr)^{\mfs_{x}} = \left(\binom{n+j}{n}n!j!\right)^{\mfs_{x}} \le 2^{\mfs_{x}(n+j)}(n!)^{\mfs_{x}}(j!)^{\mfs_{x}}.
    \end{equation*}
    The result now follows with
    \begin{equation*}
        K_1 \coloneqq 2^{\mfs_{x}}L, \qquad K_2 \coloneqq 2^{\mfs_{x}} \gamma.
    \end{equation*}
\end{proof}
The Gevrey recursion estimate \cref{lem:gevrey-recursion} allows us to control
the growth of the derivatives of the asymptotic part $w_{M}$.
\begin{theorem}[{\cite[Theorem~3]{melenk1997boundarylayer}}]
    \label{thm:deriv-bound-smooth-part}
    There are constants $C, K_{1}, K_{2} > 0$ depending only on the input data $f$, $b$, $\mfs_{x}$, 
    and $\alpha^{\pm}$ such that the functions $w_{M}$ of \eqref{eq:appendix-decomposition} 
    satisfy the following estimate: 
    Under the assumption $0 < {\eps K_{1} (2 M)^{\mfs_{x}}} \le 1$
    \begin{equation*}
        \norm{w_{M}^{(n)}}_{L^{\infty}(\Omega)} \le C K_{2}^{n} (n!)^{\mfs_{x}}, \qquad \forall n \in \mathbb{N}_{0}.
    \end{equation*}
\end{theorem}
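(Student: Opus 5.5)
The plan is to apply \cref{lem:gevrey-recursion} with $B\coloneqq 1/b$ and $u_0\coloneqq f/b$, and then sum the resulting bounds on the terms of $w_M$.

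First we verify the hypotheses. Since $b\in\cG^{\mfs_x}(\overline\Omega)$ and $b\geq\underline b^2>0$, the reciprocal $1/b$ is again $\mfs_x$-Gevrey. This follows from the Faà di Bruno formula for $t\mapsto 1/t$ composed with $b$. Alternatively, one can use a Gevrey recursion of the type in \cref{lem:gevrey-recursion}: differentiate the identity $b\cdot(1/b)=1$ with Leibniz and induct, using $\binom{n}{k}(k!)^{\mfs_x}((n-k)!)^{\mfs_x}\leq(n!)^{\mfs_x}$ as in \cref{lem:gevrey-convolution}. This yields constants $C_B,\gamma_B$ depending only on $b$, $\underline b$, and $\mfs_x$. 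The Leibniz rule combined with the same binomial inequality then shows that the product $f\cdot(1/b)$ is Gevrey with constants $C_0,\gamma_0$ depending only on $f$, $b$, and $\mfs_x$. I expect this closure-under-reciprocal step to be the only genuinely technical point. The cleanest route is the recursion: write $g=1/b$, so that $g^{(n)}=-g\sum_{k=1}^n\binom nk b^{(k)}g^{(n-k)}$, and induct on the ansatz $\norm{g^{(n)}}\leq C\gamma^n(n!)^{\mfs_x}$ with $\gamma$ chosen large relative to $\gamma_b$. The geometric factor $\sum_k(\gamma_b/\gamma)^k$ can then be made small enough to close the induction, exactly as in the proof of \cref{lem:gevrey-recursion}.

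With the hypotheses in place, \cref{lem:gevrey-recursion} gives
\begin{equation*}
\norm{u_{2j}^{(n)}}_{L^\infty(\Omega)}\leq C_0\widetilde K_1^{2j}\widetilde K_2^n((2j)!)^{\mfs_x}(n!)^{\mfs_x}.
\end{equation*}
We bound $((2j)!)^{\mfs_x}\leq(2j)^{2j\mfs_x}\leq(2M)^{2j\mfs_x}$ for $j\leq M$. Therefore
\begin{equation*}
\norm{w_M^{(n)}}_{L^\infty(\Omega)}\leq C_0\widetilde K_2^n(n!)^{\mfs_x}\sum_{j=0}^M\bigl(\eps\widetilde K_1(2M)^{\mfs_x}\bigr)^{2j}.
\end{equation*}

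We now choose $K_1\coloneqq 2\widetilde K_1$. Under the assumption $\eps K_1(2M)^{\mfs_x}\leq1$, each ratio satisfies $\eps\widetilde K_1(2M)^{\mfs_x}\leq 1/2$. The sum is then a convergent geometric series bounded by $\sum_j 4^{-j}=4/3$, independently of $M$ and $\eps$. This gives the claim with $C=\tfrac43C_0$ and $K_2=\widetilde K_2$.

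The boundary data $\alpha^\pm$ do not enter $w_M$, so the stated dependence on $\alpha^\pm$ is harmless. Taking $\mfs_x=1$ recovers the analytic estimate of \cite[Theorem~3]{melenk1997boundarylayer}.
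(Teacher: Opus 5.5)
Your proposal is correct and follows the paper's route: the paper's proof also applies \cref{lem:gevrey-recursion} with $B=1/b$ (noting that $1/b$ is again $\mfs_x$-Gevrey) and then sums over the terms $u_{2j}$ exactly as in Melenk's Theorem~3. You additionally supply details the paper leaves implicit: the Leibniz-recursion argument that $1/b$ and $f/b$ are $\mfs_x$-Gevrey, and the choice $K_1=2\widetilde K_1$, which makes the sum over $j$ a geometric series bounded by $4/3$ uniformly in $M$ and $\eps$.
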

\begin{proof}
    This follows by applying \cref{lem:gevrey-recursion} to the terms $u_{2j}$, exactly as in the proof of \cite[Theorem~3]{melenk1997boundarylayer}. Note that, since $b \in \cG^{\mfs_{x}}(\overline\Omega)$ is bounded away from $0$, its reciprocal $B(x) \coloneqq 1/b(x)$ again belongs to $\cG^{\mfs_{x}}(\overline\Omega)$.
\end{proof}
A direct consequence is that the coefficients $A_{M}^{-}$ and $A_{M}^{+}$ stay bounded.
\begin{corollary}[{\cite[Corollary~4]{melenk1997boundarylayer}}]
    With $K_{1}$ as in \cref{thm:deriv-bound-smooth-part} there is $C>0$ depending only on the input data $f,b$, the Gevrey index $\mfs_{x}$, and $\alpha^{\pm}$ such that for any $\eps, M$ 
    satisfying $0 < \eps K_{1} (2 M)^{\mfs_{x}} \le 1$ 
    \begin{equation*}
        \abs{A_{M}^{-}}, \abs{A_{M}^{+}} \le C.
    \end{equation*}
\end{corollary}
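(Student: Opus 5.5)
The plan is to evaluate the smooth part at the two endpoints. By definition,
\begin{equation*}
A_M^{\pm}=\alpha^{\pm}-w_M(\pm1),
\end{equation*}
so the triangle inequality gives $\abs{A_M^{\pm}}\le\abs{\alpha^{\pm}}+\norm{w_M}_{L^\infty(\Omega)}$. It therefore suffices to bound $w_M$ in $L^\infty(\Omega)$ uniformly in $\eps$ and $M$, as long as $0<\eps K_1(2M)^{\mfs_x}\le1$.

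This bound is the case $n=0$ of \cref{thm:deriv-bound-smooth-part}. Under exactly that hypothesis, it gives $\norm{w_M}_{L^\infty(\Omega)}\le C K_2^0(0!)^{\mfs_x}=C$. Here $C$ depends only on $f$, $b$, $\mfs_x$ and $\alpha^{\pm}$. Hence
\begin{equation*}
\abs{A_M^{\pm}}\le\abs{\alpha^{\pm}}+C,
\end{equation*}
which is the claim after renaming the constant. The new constant again depends only on the data and $\mfs_x$.

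For completeness, and in case one prefers not to use the full statement of \cref{thm:deriv-bound-smooth-part}, I would check the $n=0$ bound directly. Apply \cref{lem:gevrey-recursion} with $B=1/b$ and $u_0=f/b$; both lie in $\cG^{\mfs_x}(\overline\Omega)$ because $b\ge\underline b^2>0$. This gives
\begin{equation*}
\abs{w_M(x)}\le\sum_{j=0}^M\eps^{2j}\norm{u_{2j}}_{L^\infty}\le C_0\sum_{j=0}^M\bigl(\eps^2K_1^2((2j)!)^{\mfs_x/j}\bigr)^j.
\end{equation*}
Using $((2j)!)^{1/j}\le(2j)^2$, each factor is at most $\bigl(\eps K_1(2j)^{\mfs_x}\bigr)^2$. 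For $j\le M$ this is at most $\bigl(\eps K_1(2M)^{\mfs_x}\bigr)^2\le1$. This alone gives only a bound growing like $M+1$.

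The one point needing care, and the only potential obstacle, is getting a bound independent of $M$ rather than one of size $M+1$. The remedy is to enlarge the constant $K_1$ of the hypothesis by a factor $2$ relative to the lemma's constant, as is done in \cite{melenk1997boundarylayer}. The terms then form a geometric series with ratio at most $1/4$, and the sum is bounded by $\tfrac43C_0$. This is already built into \cref{thm:deriv-bound-smooth-part}, so the corollary follows immediately from it.
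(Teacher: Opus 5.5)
Your proposal is correct and takes the same approach as the paper, which states the corollary as a direct consequence of \cref{thm:deriv-bound-smooth-part}: write $A_M^{\pm}=\alpha^{\pm}-w_M(\pm1)$ and bound $\abs{w_M(\pm1)}$ by the $n=0$ case of that theorem. Your supplementary direct check via \cref{lem:gevrey-recursion} is also sound, including the point that the constant in the hypothesis must be twice the lemma's constant so that the terms form a geometric series and the bound is independent of $M$.
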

We now turn to the two boundary layer terms.
\begin{theorem}[{\cite[Theorem~5]{melenk1997boundarylayer}}]
    Let $u_{\eps}^{-}, u_{\eps}^{+}$ be solutions of \eqref{eq:appendix-def-bl-terms-decomp}. Then there are $C, K_{3} > 0$ depending only on the function $b$ such that
    \begin{align}
        \abs{(u_{\eps}^{-})^{(n)}(x)} &\le e^{- \underline{b}(x+1)/\eps} C K_{3}^{n} \max\{n^{{\mfs_{x}}}, \eps^{-1}\}^{n} \qquad \forall x \in \overline{\Omega}, \quad n \in \mathbb{N}_{0},\\
        \abs{(u_{\eps}^{+})^{(n)}(x)} &\le e^{- \underline{b}(1-x)/\eps} C K_{3}^{n} \max\{n^{{\mfs_{x}}}, \eps^{-1}\}^{n} \qquad \forall x \in \overline{\Omega}, \quad n \in \mathbb{N}_{0}.
    \end{align}
\end{theorem}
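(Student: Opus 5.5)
The plan follows the structure of the proof of \cite[Theorem~5]{melenk1997boundarylayer}: first a pointwise exponential decay bound for $u_\eps^-$ from the maximum principle, then a purely local derivative bound on intervals of length $\eps$. The Gevrey index only enters through the combinatorics of a Leibniz recursion. By the symmetry $x\mapsto -x$ it suffices to treat $u_\eps^-$.

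\emph{Step 1 (decay).} Set $v(x)\coloneqq e^{-\underline b(x+1)/\eps}$. Then
\begin{equation*}
L_\eps v=(b-\underline b^2)v\geq0,\qquad v(-1)=1=u_\eps^-(-1),\qquad v(1)\geq0=u_\eps^-(1).
\end{equation*}
Since $b\geq \underline b^2>0$, the maximum principle for $L_\eps$ gives $0\leq u_\eps^-\leq v$ on $\overline\Omega$. Fix $x_0\in\overline\Omega$ and let $J\coloneqq\{x\in\overline\Omega:\abs{x-x_0}\leq\eps\}$. On $J$ we then have
\begin{equation*}
\sup_J\abs{u_\eps^-}\leq e^{\underline b}\,e^{-\underline b(x_0+1)/\eps}.
\end{equation*}
The factor $e^{\underline b}$ is harmless, so it remains to bound $M_n\coloneqq\sup_J\abs{(u_\eps^-)^{(n)}}$ by $C K_3^n m_n^n M_0$, where $m_n\coloneqq\max\{n^{\mfs_x},\eps^{-1}\}$.

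\emph{Step 2 (base cases).} The bound for $M_2$ is immediate from the equation: $M_2\leq\eps^{-2}\norm{b}_{L^\infty}M_0$. For $M_1$, note that $J$ has length at least $\eps$ (since $\eps\leq1$). The mean value theorem yields a point $\xi\in J$ with $\abs{u'(\xi)}\leq 2\eps^{-1}M_0$, and integrating $u''$ over $J$ then gives
\begin{equation*}
M_1\leq 2\eps^{-1}M_0+2\eps M_2\leq C\eps^{-1}M_0.
\end{equation*}
This is the point where the boundary of $\Omega$ must be handled: $J$ is one-sided near $\pm1$, but its length is still comparable to $\eps$, which is all the argument uses.

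\emph{Step 3 (induction on $n$, the main step).} Differentiating the ODE $n$ times and applying Leibniz' formula gives
\begin{equation*}
M_{n+2}\leq\eps^{-2}\sum_{k=0}^n\binom nk C_b\gamma_b^k(k!)^{\mfs_x}M_{n-k}.
\end{equation*}
Assume inductively that $M_\ell\leq C K^\ell m_\ell^\ell M_0$ for all $\ell\leq n+1$. The key combinatorial estimate replacing the analytic Cauchy bounds is
\begin{equation*}
\binom nk (k!)^{\mfs_x}=\frac{n!}{(n-k)!}\,(k!)^{\mfs_x-1}\leq n^k n^{(\mfs_x-1)k}=n^{\mfs_x k}\leq m_n^k .
\end{equation*}
Combining it with the monotonicity $m_{n-k}\leq m_n$, the $k$-th summand is bounded by $C_b C M_0 (\gamma_b/K)^k K^n m_n^n$. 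Choosing $K\geq2\gamma_b$ makes the geometric sum at most $2C_bCK^nm_n^nM_0$. Finally, $\eps^{-2}\leq m_{n+2}^2$ and $m_n\leq m_{n+2}$, so for $K^2\geq 2C_b$ the induction closes with the same $C$.

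The step I expect to be most delicate is this last closing of the induction. One must keep the constant $C$ from growing at each step, which is why the factor $(\gamma_b/K)^k$ has to be summed geometrically rather than estimated termwise. One must also check that the base cases $n=0,1$ fit the form $CK^n m_n^n M_0$ with $K=K_3$ depending only on $b$. Combining Steps 1--3 and taking $x=x_0$ gives the asserted bound for $u_\eps^-$, and the case of $u_\eps^+$ follows by symmetry.
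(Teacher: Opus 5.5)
Your proof is correct and is essentially the argument the paper intends. The paper gives no separate proof of this statement; it defers to Melenk's proof of \cite[Theorem~5]{melenk1997boundarylayer}, namely a maximum-principle decay bound followed by local derivative estimates on $\eps$-intervals from the differentiated ODE, with $n!$ replaced by $(n!)^{\mfs_x}$. Your estimate $\binom{n}{k}(k!)^{\mfs_x}\le n^{\mfs_x k}\le \max\{n^{\mfs_x},\eps^{-1}\}^k$ is exactly that substitution, and you handle the base cases and the geometric-sum closing of the induction correctly.
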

Finally, we consider the remainder $r_{M}$.
\begin{theorem}[{\cite[Theorem~6]{melenk1997boundarylayer}}]
    There are constants $C, K_{4} > 0$ depending only on the functions $f, b$ and on $\mfs_{x}$
    such that the remainders $r_{M}$ defined in \eqref{eq:appendix-decomposition} satisfy
    \begin{equation*}
        \norm{r_{M}^{(n)}}_{L^2(\Omega)} \le C \eps^{2-n} (\eps K_{4} (2 M)^{\mfs_{x}})^{2M}, \qquad n = 0,1,2.
    \end{equation*}
\end{theorem}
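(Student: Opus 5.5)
The remainder solves $L_\eps r_M=\eps^{2M+2}u_{2M}''$ with $r_M(\pm1)=0$, so I would estimate it through energy stability and then use the equation itself for the second derivative.

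\emph{Step 1 (size of the source).} I first bound $\norm{u_{2M}''}_{L^2(\Omega)}$ explicitly in $M$. Apply \cref{lem:gevrey-recursion} with $B=1/b$, which lies in $\cG^{\mfs_x}(\overline\Omega)$ with data-dependent constants, and with $u_0=f/b$. Taking $j=2M$ and $n=2$ gives
\begin{equation*}
\norm{u_{2M}''}_{L^\infty(\Omega)}\le C_0K_1^{2M}K_2^2 2^{\mfs_x}\bigl((2M)!\bigr)^{\mfs_x}\le C\,K_1^{2M}(2M)^{2M\mfs_x}.
\end{equation*}
Here I use $(2M)!\le(2M)^{2M}$. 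Since $\Omega$ is bounded, the same bound holds in $L^2(\Omega)$. Consequently
\begin{equation*}
\norm{L_\eps r_M}_{L^2(\Omega)}\le C\eps^2\bigl(\eps K_1(2M)^{\mfs_x}\bigr)^{2M}.
\end{equation*}
This fixes $K_4:=K_1$, up to a harmless enlargement.

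\emph{Step 2 (cases $n=0,1$).} Test the equation with $r_M\in H^1_0(\Omega)$ and use coercivity. This gives $\eps^2\norm{r_M'}_{L^2}^2+\underline b^2\norm{r_M}_{L^2}^2\le\norm{L_\eps r_M}_{L^2}\norm{r_M}_{L^2}$. Hence $\norm{r_M}_{L^2}\le\underline b^{-2}\norm{L_\eps r_M}_{L^2}$, which yields the $n=0$ bound $C\eps^2(\cdot)^{2M}$. Inserting this back gives $\eps\norm{r_M'}_{L^2}\le C\norm{L_\eps r_M}_{L^2}$, that is, $\norm{r_M'}_{L^2}\le C\eps^{-1}\norm{L_\eps r_M}_{L^2}$. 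This matches $\eps^{2-1}(\cdot)^{2M}$ and is even better by a factor of $\eps$; I keep it as stated.

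\emph{Step 3 (case $n=2$).} From the ODE, $\eps^2r_M''=b\,r_M-\eps^{2M+2}u_{2M}''$. Therefore $\norm{r_M''}_{L^2}\le\eps^{-2}\bigl(\norm{b}_{L^\infty}\norm{r_M}_{L^2}+\norm{L_\eps r_M}_{L^2}\bigr)\le C(\eps K_4(2M)^{\mfs_x})^{2M}$, which is the claim with $\eps^0$.

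The only genuinely new ingredient compared with \cite[Theorem~6]{melenk1997boundarylayer} is Step 1. There the Gevrey factor $((2M)!)^{\mfs_x}$ has to be converted into the power $(2M)^{2M\mfs_x}$, with all geometric constants absorbed into $K_4$ and $C$. The constant $K_4$ must depend only on $f$, $b$ and $\mfs_x$, and not on $M$; this is guaranteed because \cref{lem:gevrey-recursion} delivers $K_1,K_2$ that are independent of $j$. I expect this step to be the main obstacle, although it is mild. The remaining steps are the standard energy argument and are uniform in $\eps$.
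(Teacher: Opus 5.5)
Your proposal is correct and follows the route the paper intends. The paper gives no separate proof: it defers to \cite[Theorem~6]{melenk1997boundarylayer}, whose argument is the energy estimate for $r_M$ plus the differential equation for $r_M''$, with the holomorphic Cauchy estimates replaced by \cref{lem:gevrey-recursion}, exactly as in your Step 1. One small correction to Step 2: your bound $\norm{r_M'}_{L^2(\Omega)}\le C\eps^{-1}\norm{L_\eps r_M}_{L^2(\Omega)}\le C\eps\bigl(\eps K_4(2M)^{\mfs_x}\bigr)^{2M}$ matches the claimed estimate exactly; it is not better by a factor of $\eps$.
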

\begin{remark}
    \Cref{thm:deriv-bound-smooth-part} and the preceding theorem show that the
    asymptotic expansion is informative precisely in the regime
    $\eps \max\{K_{1}, K_{4}\} (2M)^{\mfs_{x}} < 1$, 
    i.e.\ while $2M$ remains small compared with $\eps^{-1/\mfs_{x}}$; beyond it the expansion ceases to be useful.
\end{remark}
\subsection{Polynomial approximation}
The aim of this section is to show that, for the $H^{1}$-conforming
approximation with piecewise polynomials, it is enough to control the growth of
the derivatives on each element.
Let $I = [-1,1]$. For a bounded interval $J \subset \mathbb{R}$ and $p \in \mathbb{N}_{0}$, let $\Pi_{p}(J)$ denote the space of polynomials on $J$ of degree at most $p$.

In \cite[Appendix~A]{costabel2005maxwell} the following $C^{k-1}$-conforming 
and univariate projector has been constructed. 
Here we use the presentation from \cite[Lemma~2]{feischl2020exponential}.
\begin{lemma}
    \label{lem:hp-projector}
    For any $p,k \in \mathbb{N}$ with $p \ge 2k-1$, there is a projector $\hat{\pi}_{p,k} \colon H^{k}(I) \to \Pi_{p}(I)$ that satisfies $(\hat{\pi}_{p,k} u)^{(k)} = \hat{\pi}_{p-k,0}(u^{(k)})$, and $(\hat{\pi}_{p,k} u)^{(j)} (\pm 1) = u^{(j)}(\pm 1)$, for any $j=0,\dots,k-1$.
    
    Moreover, there holds:
    \begin{enumerate}[(i)]
        \item For every $k \in \mathbb{N}$, there exists a constant $C_{k} > 0$ such that
        \begin{equation*}
            \forall u \in H^{k}(I),\, \forall p \ge 2k-1 : \quad \norm{\hat{\pi}_{p,k} u}_{H^{k}(I)} \le C_{k} \norm{u}_{H^{k}(I)}.
        \end{equation*}
        \item For integers $p,k \in \mathbb{N}$ with $p \ge 2k-1$, $k_{1} = p - k + 1$ and for $u \in H^{k+r}(I)$ with any $k \le r \le k_{1}$ there holds the error bound
        \begin{equation*}
            \norm{(u - \hat{\pi}_{p,k} u)^{(j)}}_{L^2(I)}^2 \le \frac{(k_{1} - r)!}{(k_{1} + r)!} \norm{u^{(k+r)}}_{L^2(I)}^{2}, \qquad j=0,1,\dots,k.
        \end{equation*}
    \end{enumerate}
\end{lemma}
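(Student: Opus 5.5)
The proof is constructive: build $\hat{\pi}_{p,k}$ by integrating the Legendre truncation of $u^{(k)}$ $k$ times, fixing the $k$ integration constants by the Taylor data of $u$ at $x=-1$. The matching at $x=+1$ then comes for free from orthogonality, because $p\ge 2k-1$.

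\emph{Construction.} Let $\hat{\pi}_{q,0}$ be the $L^2(I)$-orthogonal projection onto $\Pi_q(I)$, i.e.\ truncation of the Legendre expansion after degree $q$. For $u\in H^k(I)$ set $q\coloneqq p-k$ and define
\begin{equation*}
    (\hat{\pi}_{p,k}u)(x)\coloneqq\sum_{j=0}^{k-1}\frac{u^{(j)}(-1)}{j!}(x+1)^j
    +\int_{-1}^{x}\frac{(x-t)^{k-1}}{(k-1)!}\,\bigl(\hat{\pi}_{q,0}u^{(k)}\bigr)(t)\,\dif t .
\end{equation*}
This is a polynomial of degree at most $p$ with $(\hat{\pi}_{p,k}u)^{(k)}=\hat{\pi}_{p-k,0}(u^{(k)})$. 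It reproduces $\Pi_p$, so it is a projector. Taylor's formula with integral remainder shows that the error $e\coloneqq u-\hat{\pi}_{p,k}u$ satisfies
\begin{equation*}
    e^{(j)}(x)=\int_{-1}^x\frac{(x-t)^{k-1-j}}{(k-1-j)!}\bigl(u^{(k)}-\hat{\pi}_{q,0}u^{(k)}\bigr)(t)\,\dif t,
    \qquad 0\le j\le k-1 .
\end{equation*}
Hence $e^{(j)}(-1)=0$ for all $j\le k-1$. At $x=1$ the kernel $(1-t)^{k-1-j}$ is a polynomial of degree at most $k-1$. Since $p\ge 2k-1$ gives $q=p-k\ge k-1$, this kernel lies in $\Pi_q$, so it is $L^2$-orthogonal to $u^{(k)}-\hat{\pi}_{q,0}u^{(k)}$. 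Therefore $e^{(j)}(1)=0$ as well, which gives the boundary interpolation property. This is the only place where $p\ge 2k-1$ enters.

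\emph{Error bound (ii).} For $j=k$ I will use the classical Legendre truncation estimate. Write $v=u^{(k)}\in H^r(I)$, expand $v=\sum_n a_nL_n$, and recall that $\int_I (1-x^2)^r\abs{v^{(r)}}^2\,\dif x$ is a weighted sum of the $\abs{a_n}^2$ with explicit factorial weights $\frac{(n+r)!}{(n-r)!}\frac{2}{2n+1}$ (Gegenbauer derivative orthogonality). Bounding $(1-x^2)^r\le 1$ then gives
\begin{equation*}
    \norm{v-\hat{\pi}_{q,0}v}_{L^2(I)}^2\le\frac{(q+1-r)!}{(q+1+r)!}\norm{v^{(r)}}_{L^2(I)}^2
\end{equation*}
for $r\le q+1=k_1$, which is the claim for $j=k$. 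For $j<k$, each $e^{(j)}\in H^1_0(I)$ by the previous paragraph. The Friedrichs inequality on $(-1,1)$, whose constant $2/\pi$ is less than $1$, gives $\norm{e^{(j)}}_{L^2}\le\norm{e^{(j+1)}}_{L^2}$. Descending from $j=k$ yields (ii) for all $j\le k$ with the same constant.

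\emph{Stability (i).} Stability of $\hat{\pi}_{q,0}$ in $L^2$ (norm $1$) gives $\norm{(\hat{\pi}_{p,k}u)^{(k)}}\le\norm{u^{(k)}}$. The Taylor part is controlled by the point values $\abs{u^{(j)}(-1)}\le C\norm{u}_{H^k(I)}$ via the one-dimensional trace/Sobolev embedding. The iterated integral is bounded by $C_k\norm{u^{(k)}}_{L^2}$ uniformly in $p$. Together these give (i).

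The main obstacle is the sharp, constant-free Legendre estimate with exactly the factorial ratio $(k_1-r)!/(k_1+r)!$. I will take it from the standard Gegenbauer computation, as recorded in \cite[Appendix~A]{costabel2005maxwell} and \cite[Lemma~2]{feischl2020exponential}. Everything else is elementary.
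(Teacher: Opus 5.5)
Your proof is correct and complete, modulo the standard weighted Legendre identity that you cite. The paper gives no proof of this lemma. It imports the statement from \cite[Appendix~A]{costabel2005maxwell} in the form of \cite[Lemma~2]{feischl2020exponential}, so your argument supplies what the paper leaves to those references.

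Your operator is necessarily the one constructed there. The identity $(\hat{\pi}_{p,k}u)^{(k)}=\hat{\pi}_{p-k,0}(u^{(k)})$ determines $\hat{\pi}_{p,k}u$ up to a polynomial of degree below $k$. The $k$ conditions at $x=-1$ then fix that polynomial uniquely.

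Your steps all check out:
\begin{enumerate}[(1)]
    \item The conditions at $x=1$ follow from orthogonality, since $(1-t)^{k-1-j}\in\Pi_{k-1}\subseteq\Pi_{p-k}$ when $p\geq 2k-1$.
    \item The case $j=k$ follows from the weighted identity with weights $\frac{(n+r)!}{(n-r)!}\frac{2}{2n+1}$. You also need that $\frac{(n+r)!}{(n-r)!}$ is increasing in $n$ for $n\geq k_1\geq r$, and that $(1-x^2)^r\leq 1$.
    \item The descent to $j<k$ uses that each $e^{(j)}$ lies in $H^1_0(I)$, together with the Friedrichs constant $2/\pi<1$ on $(-1,1)$.
\end{enumerate}

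Your argument never uses the lower bound $r\geq k$, so it actually proves (ii) for all $0\leq r\leq k_1$. This is a slight strengthening of the stated lemma.
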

The following is the $\mfs_{x}$-Gevrey analogue of \cite[Lemma~9]{melenk1997boundarylayer}.  Its proof differs from the analytic one, which relies on approximation by Chebyshev interpolation of a holomorphic extension; we instead argue directly with the projector $\hat{\pi}_{p,2}$ of \cref{lem:hp-projector} and the Gevrey derivative bounds.
\begin{lemma}
    \label{lem:gevrey-poly-approx}
    Let $\mfs_{x} \ge 1$, and let $u\in C^\infty(I)$ satisfy
    \begin{equation*}
        \norm{u^{(n)}}_{L^2(I)}
        \leq C_u \gamma^n (n!)^{\mfs_{x}}
        \qquad \forall n\in\mathbb N_0
    \end{equation*}
    for some $C_u,\gamma>0$. 
    Then there is a sequence of polynomials
    $(P_p)_{p=0}^\infty$, $P_p\in{\Pi_p}(I)$, such that
    \begin{equation*}
        \norm{u-P_p}_{L^\infty(I)}
        +
        \norm{(u-P_p)'}_{L^\infty(I)}
        \leq
        C C_u \exp\bigl(-\sigma p^{1/\mfs_{x}}\bigr),
    \end{equation*}
    where $C,\sigma>0$ depend only on $\gamma$ and $\mfs_{x}$.
    For $p\geq3$, one may take $P_p=\hat{\pi}_{p,2}u$.
\end{lemma}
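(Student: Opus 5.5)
For $p\geq3$ I take $P_p\coloneqq\hat\pi_{p,2}u$ from \cref{lem:hp-projector} with $k=2$, so $k_1=p-1$. For $0\leq p\leq2$ I take $P_p$ to be any fixed polynomial, e.g.\ $P_p=0$. Since $\norm{u}_{W^{1,\infty}(I)}\lesssim\norm{u}_{H^2(I)}\leq 2C_u\max\{1,\gamma\}^2 2^{\mfs_x}$, these finitely many cases are absorbed into the constant $C$.

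First I reduce $L^\infty$ to $L^2$. On $I$, the one-dimensional Sobolev embedding gives $\norm{v}_{L^\infty(I)}\lesssim\norm{v}_{L^2(I)}+\norm{v'}_{L^2(I)}$. I apply this to $v=u-P_p$ and to $v=(u-P_p)'$. Then
\begin{equation*}
    \norm{u-P_p}_{L^\infty(I)}+\norm{(u-P_p)'}_{L^\infty(I)}\lesssim\sum_{j=0}^{2}\norm{(u-\hat\pi_{p,2}u)^{(j)}}_{L^2(I)}.
\end{equation*}
By \cref{lem:hp-projector}(ii), for every integer $r$ with $2\leq r\leq p-1$, each term on the right is bounded by
\begin{equation*}
    \left(\frac{(k_1-r)!}{(k_1+r)!}\right)^{1/2}\norm{u^{(r+2)}}_{L^2(I)}
    \leq C_u\left(\frac{(k_1-r)!}{(k_1+r)!}\right)^{1/2}\gamma^{r+2}\bigl((r+2)!\bigr)^{\mfs_x}.
\end{equation*}
The whole proof reduces to choosing $r=r(p)$ so that this expression is at most $\exp(-\sigma p^{1/\mfs_x})$.

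\textbf{Main step: the choice of $r$.} I use
\begin{equation*}
    \frac{(k_1-r)!}{(k_1+r)!}\leq k_1^{-2r}\prod_{i=1-r}^{r-1}\Bigl(1+\frac{i}{k_1}\Bigr)^{-1}.
\end{equation*}
For $r\leq k_1/2$, Stirling's formula gives the simpler bound $\lesssim(e/(2k_1))^{2r}$ up to harmless factors. Along with $(r+2)!\lesssim(r+2)^{r+2}e^{-r}$, this gives the quantity to be bounded,
\begin{equation*}
    \lesssim C_u\, \mathrm{poly}(r)\left(\frac{c\,\gamma\, r^{\mfs_x}}{k_1}\right)^{r},
\end{equation*}
with $c$ depending only on $\mfs_x$. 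I choose
\begin{equation*}
    r\coloneqq\floor{\delta p^{1/\mfs_x}},
\end{equation*}
with $\delta>0$ small enough (depending on $\gamma,\mfs_x$) that $c\gamma r^{\mfs_x}/k_1\leq c\gamma\delta^{\mfs_x}p/(p-1)\leq e^{-2}$. The admissibility condition $2\leq r\leq k_1/2$ holds for all large $p$, since $\mfs_x\geq1$. The error is then at most $C C_u\,\mathrm{poly}(p)\,e^{-2\delta p^{1/\mfs_x}}$, which is $\leq CC_u e^{-\sigma p^{1/\mfs_x}}$ with $\sigma=\delta$ after the polynomial factor is absorbed. For the finitely many small $p$ for which $r<2$ or the condition fails, I use the stability bound from \cref{lem:hp-projector}(i) and enlarge $C$.

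The obstacle I expect is bookkeeping the factorial ratio cleanly, so that $C$ and $\sigma$ depend only on $\gamma$ and $\mfs_x$. I will keep the exponent of $r$ explicit, and place the factor $\gamma^{r+2}$ and the powers of $2$ coming from $(r+2)!$ versus $r!$ inside the base that $\delta$ controls. In the analytic case $\mfs_x=1$ this reproduces a genuine exponential rate $e^{-\sigma p}$, consistent with \cite{melenk1997boundarylayer}.
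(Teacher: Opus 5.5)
Your proposal is correct and follows essentially the same route as the paper: $P_p=\hat\pi_{p,2}u$, the bound of \cref{lem:hp-projector}(ii) summed over $j=0,1,2$, the choice $r\sim p^{1/\mfs_x}$, and the embedding $H^2(I)\hookrightarrow W^{1,\infty}(I)$. The paper's $r=\lfloor (p/c)^{1/\mfs_x}\rfloor$ is your $r$ with $\delta=c^{-1/\mfs_x}$. The only difference is cosmetic: the paper bounds the factorial ratio by the cruder $(p-r)^{-2r}$, whereas you use a Stirling estimate valid for $r\le k_1/2$; the paper's bound needs no $r\le k_1/2$ restriction, and both give the same rate.
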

\begin{proof}
    Let $p \ge 3$, set $P_p=\hat{\pi}_{p,2}u$, and let
    $2\leq r\leq p-1$. Summing the estimates of \cref{lem:hp-projector}(ii) for $j=0,1,2$ yields
    \begin{equation*}
        \norm{u-P_p}_{H^2(I)}
        \leq
        C
        \left(
        \frac{\Gamma(p-r)}
        {\Gamma(p+r)}
        \right)^{1/2}
        \norm{u^{(r+2)}}_{L^2(I)}.
    \end{equation*}
    Using the recursion formula for the Gamma function, we obtain
    \begin{equation*}
        \left(
        \frac{\Gamma(p-r)}
        {\Gamma(p+r)}
        \right)^{1/2}
        \leq
        (p-r)^{-r}.
    \end{equation*}
    Hence, by the Gevrey estimate and Stirling's inequality,
    \begin{equation*}
        \norm{u-P_p}_{H^2(I)}
        \leq
        C C_u
        \frac{\gamma^{r+2}((r+2)!)^{\mfs_{x}}}
        {(p-r)^r}
        \leq
        C C_u
        \left(
        \frac{\widetilde\gamma r^{\mfs_{x}}}
        {p-r}
        \right)^r
    \end{equation*}
    for some $\widetilde\gamma>0$ depending only on $\gamma$ and $\mfs_{x}$.
    
    Choose $c>2\widetilde\gamma+1$ and set
    \begin{equation*}
        r=\left\lfloor (p/c)^{1/\mfs_{x}}\right\rfloor.
    \end{equation*}
    For all sufficiently large $p$, this gives $2\leq r<p$ and
    \begin{equation*}
        \frac{\widetilde\gamma r^{\mfs_{x}}}{p-r}
        \leq
        \frac{\widetilde\gamma (p/c)}
        {p(1-1/c)}
        = \frac{\widetilde\gamma}{c-1} \eqqcolon q<1.
    \end{equation*}
    %
    Therefore,
    \begin{equation*}
        \norm{u-P_p}_{H^2(I)}
        \leq
        C C_u q^r
        \le C C_u q^{-1}
        \exp\left(
        -\frac{|\log q|}{c^{1/\mfs_{x}}}p^{1/\mfs_{x}}
        \right).
    \end{equation*}
    Absorbing the factor $q^{-1}$ into $C$ and setting
    \begin{equation*}
        \sigma \coloneqq \frac{|\log q|}{c^{1/\mfs_{x}}}>0,
    \end{equation*}
    we conclude that
    \begin{equation*}
        \norm{u-P_p}_{H^2(I)}
        \leq
        C C_u\exp\bigl(-\sigma p^{1/\mfs_{x}}\bigr).
    \end{equation*}
    The assertion follows from the embedding
    $H^2(I)\hookrightarrow W^{1,\infty}(I)$. The finitely many remaining
    values of $p$ are absorbed into the constant $C$.
\end{proof}

For a polynomial degree $p \ge 1$, 
we denote by $i_p: C(I)\to \Pi_p(I)$ 
the interpolation operator associated with the $p+1$ Gauss--Lobatto points 
in $I$.
There exists $C_{G} > 0$ such that
\begin{equation}
    \label{eq:appendix-stability-gauss-lobatto}
    \forall u\in C(I) \; \forall p\geq 1\;: \quad 
    \norm{i_{p}u}_{L^{\infty}(I)} \le C_{G} (1 + \ln p) \norm{u}_{L^{\infty}(I)}.
\end{equation}
For a proof, see \cite{sundermann1983lebesgue}.

\begin{lemma}[{\cite[Lemma~10]{melenk1997boundarylayer}}]
    Let $u \in C^{1}(I)$. 
    Then there holds with the constant 
    $C_G$ from \eqref{eq:appendix-stability-gauss-lobatto} for all degrees $p\geq 1$
    \begin{align*}
        \norm{u - i_{p}u}_{L^{\infty}(I)} 
        &\le (1 + C_{G}) (1 + \ln p) \norm{u}_{L^{\infty}(I)},\\
        \norm{(u - i_{p}u)'}_{L^{\infty}(I)}
        &\le \norm{u'}_{L^{\infty}(I)} + C_{G} (1 + \ln p) p^2 \norm{u}_{L^{\infty}(I)}.
    \end{align*}
\end{lemma}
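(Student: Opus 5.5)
The proof rests on two properties of Gauss--Lobatto interpolation. First, $i_p$ reproduces polynomials. Second, it is stable in $L^\infty$ by \eqref{eq:appendix-stability-gauss-lobatto}. For the derivative we also need a polynomial inverse estimate, namely Markov's inequality $\norm{q'}_{L^\infty(I)}\le p^2\norm{q}_{L^\infty(I)}$ for $q\in\Pi_p(I)$. The $L^\infty$ bound needs no approximating polynomial at all; the derivative bound is a one-line triangle inequality combined with Markov.

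For the first estimate, write $u-i_pu$ and apply the triangle inequality:
\begin{equation*}
    \norm{u-i_pu}_{L^\infty(I)}\le\norm{u}_{L^\infty(I)}+\norm{i_pu}_{L^\infty(I)}
    \le\bigl(1+C_G(1+\ln p)\bigr)\norm{u}_{L^\infty(I)}.
\end{equation*}
Since $1+\ln p\ge1$ for $p\ge1$, this is at most $(1+C_G)(1+\ln p)\norm{u}_{L^\infty(I)}$, as asserted.

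For the second estimate, again split
\begin{equation*}
    \norm{(u-i_pu)'}_{L^\infty(I)}\le\norm{u'}_{L^\infty(I)}+\norm{(i_pu)'}_{L^\infty(I)}.
\end{equation*}
The function $i_pu$ is a polynomial of degree at most $p$ on $I=[-1,1]$. Markov's inequality on $[-1,1]$ therefore gives
\begin{equation*}
    \norm{(i_pu)'}_{L^\infty(I)}\le p^2\norm{i_pu}_{L^\infty(I)}.
\end{equation*}
Bounding the right-hand side by the stability estimate \eqref{eq:appendix-stability-gauss-lobatto} yields
\begin{equation*}
    \norm{(i_pu)'}_{L^\infty(I)}\le C_G(1+\ln p)p^2\norm{u}_{L^\infty(I)},
\end{equation*}
which is exactly the claimed second bound.

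None of these steps is a real obstacle. The one point to check is that Markov's inequality is used on the reference interval $[-1,1]$ with constant exactly $p^2$, with no scaling factor. Note also that $C^1(I)$ regularity is needed only so that $\norm{u'}_{L^\infty(I)}$ is finite; continuity of $u$ alone suffices for $i_p$ and its stability. Unlike the surrounding results, this lemma is independent of the Gevrey index $\mfs_{x}$, so the argument of \cite{melenk1997boundarylayer} carries over verbatim.
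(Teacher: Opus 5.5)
Your proof is correct and matches the argument the paper relies on. The paper gives no proof of its own and defers to Melenk's Lemma~10, which uses exactly the triangle inequality, the Gauss--Lobatto stability bound \eqref{eq:appendix-stability-gauss-lobatto}, and Markov's inequality $\norm{q'}_{L^\infty(I)}\le p^2\norm{q}_{L^\infty(I)}$ on $\Pi_p(I)$; your absorption $1+C_G(1+\ln p)\le(1+C_G)(1+\ln p)$ for $p\geq1$ is also correct.
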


The error of the Gauss--Lobatto interpolation satisfies the following estimate:
\begin{lemma}
    Let $u$ satisfy the assumptions of \cref{lem:gevrey-poly-approx}. 
    Then there are $C, \sigma > 0$ depending only on $\gamma$ of \cref{lem:gevrey-poly-approx}, $\mfs_{x}$ and $C_{G}$ such that
    \begin{equation*}
        \norm{u - i_{p}u}_{L^{\infty}(I)} + \norm{(u - i_{p}u)'}_{L^{\infty}(I)} \le C C_{u} e^{-\sigma p^{1/\mfs_{x}}}.
    \end{equation*}
\end{lemma}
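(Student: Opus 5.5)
The argument is the standard Lebesgue-constant one: compare $i_p u$ with the near-best polynomial $P_p$ from \cref{lem:gevrey-poly-approx}, and absorb the logarithmic and algebraic losses of Gauss--Lobatto interpolation into the root-exponential decay.

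Fix $p\ge 1$ and let $P_p\in\Pi_p(I)$ be the polynomial from \cref{lem:gevrey-poly-approx}. Gauss--Lobatto interpolation reproduces $\Pi_p(I)$, so $i_pP_p=P_p$. Setting $e\coloneqq u-P_p\in C^1(I)$, we therefore have the identity
\begin{equation*}
    u-i_pu = e - i_p e .
\end{equation*}
This reduces the claim to bounding $e-i_pe$ and its derivative.

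We then apply the preceding lemma ({\cite[Lemma~10]{melenk1997boundarylayer}}) to $e$:
\begin{align*}
    \norm{u-i_pu}_{L^\infty(I)} &\le (1+C_G)(1+\ln p)\norm{e}_{L^\infty(I)},\\
    \norm{(u-i_pu)'}_{L^\infty(I)} &\le \norm{e'}_{L^\infty(I)} + C_G(1+\ln p)p^2\norm{e}_{L^\infty(I)}.
\end{align*}
By \cref{lem:gevrey-poly-approx}, both $\norm{e}_{L^\infty(I)}$ and $\norm{e'}_{L^\infty(I)}$ are bounded by $CC_u\exp(-\sigma_0 p^{1/\mfs_{x}})$, where $\sigma_0$ and $C$ depend only on $\gamma$ and $\mfs_{x}$. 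Summing gives
\begin{equation*}
    \norm{u-i_pu}_{L^\infty(I)}+\norm{(u-i_pu)'}_{L^\infty(I)}\le C C_u\bigl(1+(1+C_G)(1+\ln p)(1+p^2)\bigr)e^{-\sigma_0p^{1/\mfs_{x}}}.
\end{equation*}

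The only point needing care is absorbing the polynomial factor $(1+\ln p)(1+p^2)$ into the root-exponential decay. This loss is why the statement carries a possibly smaller rate $\sigma$ than the one in \cref{lem:gevrey-poly-approx}. Take any $\sigma\in(0,\sigma_0)$, for instance $\sigma=\sigma_0/2$. Since $p^{1/\mfs_{x}}\to\infty$ for every finite $\mfs_{x}\ge1$, the function
\begin{equation*}
    p\mapsto (1+\ln p)(1+p^2)\,e^{-(\sigma_0-\sigma)p^{1/\mfs_{x}}}
\end{equation*}
is bounded on $\bN$. Its supremum depends only on $\sigma_0$ and $\mfs_{x}$, and hence only on $\gamma$ and $\mfs_{x}$. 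Multiplying this supremum by $(1+C_G)$ and the constant from \cref{lem:gevrey-poly-approx} gives the final constant $C$, which depends only on $\gamma$, $\mfs_{x}$ and $C_G$. This proves the asserted bound for all $p\ge1$, and the estimate is linear in $C_u$, as claimed.
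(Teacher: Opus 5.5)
Your proof is correct and follows the route the paper intends. The paper gives no separate proof for this lemma but defers to the analytic counterpart in \cite{melenk1997boundarylayer}, which argues exactly as you do: split $u-i_pu=e-i_pe$ with $e=u-P_p$ from \cref{lem:gevrey-poly-approx}, apply the Gauss--Lobatto stability lemma, and absorb the $(1+\ln p)p^2$ loss into $e^{-\sigma_0 p^{1/\mfs_{x}}}$ at the cost of a smaller rate $\sigma<\sigma_0$.
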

\begin{remark}
    Because $\pm1$ belong to the Gauss--Lobatto nodes, 
    the interpolant reproduces the endpoint values, $u(\pm1) = i_{p}u(\pm1)$.
\end{remark}
\subsection{Robust approximation on a three-element mesh}
\label{sec:RobApp3El}
Following \cite[Section~4]{melenk1997boundarylayer}, we introduce the following
piecewise-polynomial spaces and three-element mesh.
\begin{definition}
    Let $\Delta = \{x_i : 0 \le i \le n\}$ be 
    a collection of $n+1 \ge 2$ nodal points satisfying
    \begin{equation*}
        -1 = x_{0} < x_{1} < \cdots < x_{n} = 1.
    \end{equation*}
    These $n+1$ nodal points define a mesh on $\Omega$, 
    i.e., a partition of $\Omega$ into open ``elements'' $I_{i} := (x_{i-1},x_{i})$, $i=1,\dots,n$.
    On this mesh define the spaces of continuous piecewise polynomials of degree at most $p$ by
    \begin{align*}
        S^p(\Delta) &\coloneqq \{u \in H^{1}(\Omega) : u|_{I_{i}} \in \Pi_{p}(I_{i}), i=1,\dots,n\},\\
        S_{0}^p(\Delta) &\coloneqq S^p(\Delta) \cap H_{0}^{1}(\Omega).
    \end{align*}
\end{definition}
\begin{definition}
    \label{def:appendix-approx-spaces}
    For $\kappa > 0$ and $p \in \mathbb{N}$,
    introduce spaces $S(\kappa,p) \subset H^1(\Omega)$ 
    of continuous piecewise polynomials of degree at most $p$ by
    \begin{equation*}
        S(\kappa, p) \coloneqq 
        \left\{ 
        \begin{alignedat}{2}
            S^{p}(\Delta), \quad \Delta &= \{-1, 1\} \quad &&\text{for } \kappa p \eps \ge 1/2\\
            S^{p}(\Delta), \quad \Delta &= \{-1, -1 + \kappa p \eps, 1 - \kappa p \eps, 1\} \quad &&\text{for } \kappa p \eps < 1/2.
        \end{alignedat}
        \right.
    \end{equation*}
\end{definition}
With this, we are now able to state the Gevrey analogue of \cite[Theorem~16]{melenk1997boundarylayer}.
\begin{theorem}
    \label{thm:gevrey-three-element}
    Let $\mfs_{x} \ge 1$ and let
    $f,b\in \cG^{\mfs_{x}}(\overline{\Omega})$, with $b \ge \underline{b}^2$ on
    $\overline{\Omega}$. Let $u_\eps$ be the solution of
    \eqref{eq:appendix-problem}. Then there are $C,\tau,\kappa_0>0$ depending
    only on $f$, $b$, $\underline{b}$, $\alpha^\pm$, and $\mfs_{x}$ such that the
    following holds. For each $0<\kappa\leq\kappa_0$ and $p\in\mathbb{N}$
    there is $v_p\in S(\kappa,p)$ with $v_p(\pm1)=u_\eps(\pm1)$ and
    \begin{equation*}
        \norm{u_\eps-v_p}_{L^{\infty}(\Omega)} + \kappa p \eps \norm{(u_\eps-v_p)'}_{L^{\infty}(\Omega)}
        \le C(1+\ln p)p^2 \exp\left(-\tau (\kappa p)^{1/\mfs_{x}}\right).
    \end{equation*}
    In particular, for $\mfs_{x}=1$ the
    estimate reduces to the robust exponential rate
    $Ce^{-\tau \kappa p}$ of the analytic case.
\end{theorem}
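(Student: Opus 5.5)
We follow the proof of \cite[Theorem~16]{melenk1997boundarylayer} and distinguish two regimes according to whether the asymptotic expansion is informative. Fix $\kappa\le\kappa_0$ and $p$, and choose the expansion order
\begin{equation*}
    2M\coloneqq\floor{\lambda(\kappa p)^{1/\mfs_x}}
\end{equation*}
for a small $\lambda>0$ to be fixed.

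\emph{Case (a): asymptotic regime.} Suppose $\eps\max\{K_1,K_4\}(2M)^{\mfs_x}\le\theta<1$; this holds when $\kappa p\eps$ is small, since $(2M)^{\mfs_x}\lesssim\lambda^{\mfs_x}\kappa p$. We use the decomposition \eqref{eq:appendix-decomposition} and approximate each part separately.

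The smooth part $w_M$ satisfies the uniform Gevrey bound of \cref{thm:deriv-bound-smooth-part}. On each of the (at most three) elements we apply the Gauss--Lobatto interpolant $i_p$ after affine pullback; element lengths are at most $2$, so the Gevrey constants do not deteriorate. The Gauss--Lobatto error lemma then gives an error $Ce^{-\sigma p^{1/\mfs_x}}$ in $W^{1,\infty}$. Because $i_p$ interpolates at element endpoints, the elementwise interpolants glue continuously.

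The remainder $r_M$ is left unapproximated. By the remainder theorem and the 1D embedding $\norm{r}_{L^\infty}\lesssim\norm{r}_{L^2}^{1/2}\norm{r'}_{L^2}^{1/2}$, it contributes $\lesssim \theta^{2M}\approx\exp(-c(\kappa p)^{1/\mfs_x})$ in $L^\infty$. Its derivative is weighted by $\kappa p\eps$, giving $\eps\norm{r_M'}\lesssim\theta^{2M}$.

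The boundary layers $u_\eps^\pm$ are the crux. On the layer element $(-1,-1+\kappa p\eps)$, rescale $x=-1+\eps\xi$ with $\xi\in(0,\kappa p)$. Then
\begin{equation*}
    \abs{\partial_\xi^n u_\eps^-}\le CK_3^n\max\{n^{\mfs_x},1\}^n\le CK_3'^n(n!)^{\mfs_x}.
\end{equation*}
Mapping the element of length $\kappa p$ to $I$ multiplies the $n$-th derivative by $(\kappa p/2)^n$, so a direct application of \cref{lem:gevrey-poly-approx} fails. This is the main obstacle. The analytic proof handles it via analyticity in a strip whose size is independent of $p$, combined with $\kappa\le\kappa_0$ small.

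In the Gevrey setting I would instead exploit the anisotropic bound $\max\{n^{\mfs_x},\eps^{-1}\}^n$ directly within the projector estimate from the proof of \cref{lem:gevrey-poly-approx}. For $\hat\pi_{p,2}$ on the element of length $h=\kappa p\eps$, the error is bounded by
\begin{equation*}
    (h/2)^{r+2}(p-r)^{-r}K_3^{r+2}\max\{(r+2)^{\mfs_x},\eps^{-1}\}^{r+2}.
\end{equation*}
With $r\sim(\kappa p)^{1/\mfs_x}$ one has $r^{\mfs_x}\lesssim \kappa p\lesssim\eps^{-1}$ in this regime, so the max equals $\eps^{-1}$. The bound then reduces to $(\kappa p K_3/(2(p-r)))^r\le(\kappa K_3)^r$, which is exponentially small in $(\kappa p)^{1/\mfs_x}$ once $\kappa_0 K_3<1$. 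This dictates the choice of $\kappa_0$.

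On the remaining elements, $u_\eps^-$ is bounded by $e^{-\underline b\kappa p}$ pointwise and, after scaling, derivative-wise as well. There we approximate it by the linear function (or zero) matching the nodal value. The resulting mismatch is $O(e^{-\underline b\kappa p})$ in $L^\infty$, and its derivative is $O(\eps^{-1}e^{-\underline b\kappa p})$, which is absorbed by the weight $\kappa p\eps$. The coefficients $A_M^\pm$ are bounded by the corollary. Boundary values are reproduced exactly, since every approximant interpolates at $\pm1$.

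\emph{Case (b): $\kappa p\eps$ not small, i.e.\ $\eps\gtrsim(\kappa p)^{-1}$.} We use \cref{thm:appendix-deriv-bounds} directly, which gives $\norm{u_\eps^{(n)}}_{L^2}\le CK^n\max\{n^{\mfs_x},\kappa p\}^n$. Rerunning the proof of \cref{lem:gevrey-poly-approx} with $r\sim(\kappa p)^{1/\mfs_x}$, so that $\max\{r^{\mfs_x},\kappa p\}\sim\kappa p$, gives the error factor $(K\kappa p/(p-r))^r\le(2K\kappa)^r$. This is again exponentially small for small $\kappa_0$. We use one element or the three-element mesh as the definition of $S(\kappa,p)$ requires, interpolating at the nodes. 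The polynomial prefactor $(1+\ln p)p^2$ arises from the Gauss--Lobatto derivative estimate whenever interpolation of small nodal values is needed, as in \cite{melenk1997boundarylayer}.

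Collecting both cases yields the bound with $\tau$ given by the minimum of the exponents above.
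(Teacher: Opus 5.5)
Your proposal is correct and follows the paper's route: the paper proves this theorem only by reference to the proof of \cite[Theorem~16]{melenk1997boundarylayer} under the Gevrey substitutions. Your argument reproduces that structure, namely the asymptotic decomposition with $2M\sim(\kappa p)^{1/\mfs_x}$ on the three-element mesh when $\kappa p\eps$ is small, and a direct use of \cref{thm:appendix-deriv-bounds} otherwise. Your projector estimate on the boundary-layer element, with $\hat\pi_{p,2}$ and $r\sim(\kappa p)^{1/\mfs_x}$ exploiting the $\max\{n^{\mfs_x},\eps^{-1}\}^n$ bound, supplies explicitly the Gevrey replacement for Melenk's analytic layer-approximation step that the paper leaves implicit. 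It uses the same technique as \cref{lem:gevrey-poly-approx}.
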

\begin{corollary}[{\cite[Corollary~18]{melenk1997boundarylayer}}]
    \label{cor:gevrey-hp-galerkin}
    In \eqref{eq:appendix-discrete-problem} set $S_{0} \coloneqq S(\kappa,p) \cap H_{0}^{1}(\Omega)$ and $S_{D} \coloneqq l(x) + S_{0}$ with $l(x) = \alpha^{-}(1-x)/2 + \alpha^{+} (x+1)/2$. Denote by $u_{FE}$ the finite element solution of \eqref{eq:appendix-discrete-problem}. Then there is $\kappa_{0}$ depending only on the input data $f,b$, $\alpha^{\pm}$ and $\mfs_{x}$ such that for all $0 < \kappa \le \kappa_{0}$ there are $C, \sigma > 0$ independent of $\eps, p$ such that
    \begin{equation*}
        \norm{u_{\eps} - u_{FE}}_{1,\eps} \le C e^{- \sigma p^{1/\mfs_{x}}}.
    \end{equation*}
\end{corollary}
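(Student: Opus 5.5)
The natural route is Galerkin quasi-optimality combined with \cref{thm:gevrey-three-element}. The one thing to arrange is that the boundary data match the affine lifting $l$.

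\emph{Step 1 (reduction to best approximation).} Let $v_p\in S(\kappa,p)$ be the approximant from \cref{thm:gevrey-three-element}. It satisfies $v_p(\pm1)=u_\eps(\pm1)=\alpha^\pm$, and $l$ also interpolates $\alpha^\pm$. Hence $v_p-l\in S(\kappa,p)\cap H^1_0(\Omega)=S_0$, since $l$ is linear and lies in $S(\kappa,p)$ for every $p\ge1$. Therefore $v_p\in S_D$. By \eqref{eq:appendix-quasi-optimality} and the uniform norm equivalence \eqref{eq:appendix-norm-equiv},
\begin{equation*}
\norm{u_\eps-u_{FE}}_{1,\eps}\le \frac{\max\{1,\norm{b}_{L^\infty(\Omega)}^{1/2}\}}{\min\{1,\underline b\}}\,\norm{u_\eps-v_p}_{1,\eps},
\end{equation*}
and the constant here is independent of $\eps$ and $p$.

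\emph{Step 2 (from $L^\infty$ to the $1,\eps$-norm).} Since $\abs{\Omega}=2$, we have $\norm{w}_{L^2}\le\sqrt2\norm{w}_{L^\infty}$ and $\eps\norm{w'}_{L^2}\le\sqrt2\,\eps\norm{w'}_{L^\infty}$. The theorem controls $\kappa p\eps\norm{(u_\eps-v_p)'}_{L^\infty}$, so
\begin{equation*}
\eps\norm{(u_\eps-v_p)'}_{L^\infty}\le(\kappa p)^{-1}\,C(1+\ln p)p^2e^{-\tau(\kappa p)^{1/\mfs_x}}.
\end{equation*}
Because $p\ge1$ and $\kappa\le\kappa_0$ is fixed, the factor $(\kappa p)^{-1}\le\kappa^{-1}$ is harmless. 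This gives
\begin{equation*}
\norm{u_\eps-v_p}_{1,\eps}\le C'(1+\kappa^{-1})(1+\ln p)p^2\exp\bigl(-\tau\kappa^{1/\mfs_x}p^{1/\mfs_x}\bigr).
\end{equation*}

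\emph{Step 3 (absorbing the algebraic prefactor).} Set $\sigma\coloneqq\tfrac12\tau\kappa^{1/\mfs_x}$. The function $p\mapsto(1+\ln p)p^2e^{-\sigma p^{1/\mfs_x}}$ is bounded on $\bN$, because a stretched exponential dominates any polynomial. Its supremum depends only on $\sigma$ and $\mfs_x$. So $(1+\ln p)p^2e^{-2\sigma p^{1/\mfs_x}}\le C''e^{-\sigma p^{1/\mfs_x}}$, which yields the claim with $C,\sigma$ depending on $\kappa$ and the data but not on $\eps$ or $p$. The same $\kappa_0$ as in \cref{thm:gevrey-three-element} works.

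The main obstacle is not in this corollary but in the cited \cref{thm:gevrey-three-element}. Here the only delicate points are the following. First, the derivative term is weighted by $\kappa p\eps$ rather than $\eps$, which is handled by $p\ge1$. Second, both mesh regimes ($\kappa p\eps\ge1/2$ and $\kappa p\eps<1/2$) are covered because the theorem is stated for both. Third, the endpoint condition $v_p(\pm1)=u_\eps(\pm1)$ is essential: it places $v_p$ in the affine space $S_D$.
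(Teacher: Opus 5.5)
Your proposal is correct and follows the paper's route. The paper gives no separate proof and defers to \cite[Corollary~18]{melenk1997boundarylayer}, and your argument is exactly that: put the approximant from \cref{thm:gevrey-three-element} into $S_D$ via the linear lifting $l$, apply Galerkin quasi-optimality with the $\eps$-uniform norm equivalence, and absorb the factors $(\kappa p)^{-1}$ and $(1+\ln p)p^2$ into the root-exponential rate by halving $\sigma$.
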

\begin{remark}
    \label{rem:appendix-uniform-hp-constants}
    For fixed $f$, $\alpha^\pm$, and $\mfs_{x}$, the dependence of $\kappa_0$,
    and of $C$ and $\sigma$ for fixed admissible $\kappa$, on $b$ is only
    through its positive lower and upper bounds and the constants quantifying
    its spatial Gevrey regularity. Consequently, if these bounds are uniform
    for a family $b(\cdot;\bm y)$, then $\kappa_0$, $C$, and $\sigma$ can be
    chosen uniformly in $\bm y$.
\end{remark}

\subsection{Impact of numerical quadrature}
If the coefficient $b$ and the right-hand side $f$ are general $\mfs_{x}$-Gevrey
functions, any implementation of \eqref{eq:appendix-discrete-problem} must resort to numerical quadrature for the evaluation of the stiffness matrix and
the load vector. We show that employing a composite Gaussian quadrature rule with $p+q$ quadrature points on each subinterval, where $q$ is proportional to $p$, is sufficient to preserve the robust 
root-exponential convergence of the finite element method based on the spaces $S(\kappa,p)$. 

For brevity, we focus on the case with homogeneous Dirichlet
boundary conditions $\alpha^{\pm} = 0$. For an interval $J$, denote by $G_{J}^{q}(w)$ the $q$-point Gaussian quadrature rule on $J$ applied to the function $w$. 
Replacing the integrals in \eqref{eq:appendix-discrete-problem} by a $p+q$-point Gaussian quadrature rule in each element leads to the problem:
\begin{equation}
    \label{eq:appendix-sol-with-quadrature}
    \text{find } u_{G} \in S_{0}^{p}(\Delta) \text{ such that} \qquad B_{\eps}^{p+q}(u_{G},v) = F^{p+q}(v), \qquad \forall v \in S_{0}^{p}(\Delta).
\end{equation}
Here the bilinear form $B_{\eps}^{p+q}$ and the right-hand side $F^{p+q}$ are given by
\begin{equation*}
    B_{\eps}^{p+q}(u,v) = \sum_{i=1}^{n} \left(\eps^2 G_{I_{i}}^{p+q}(u' v') + G_{I_{i}}^{p+q}(b u v) \right), \qquad F^{p+q}(v) = \sum_{i=1}^{n} G_{I_{i}}^{p+q}(fv),
\end{equation*}
and $S_{0}^{p}(\Delta) = S(\kappa,p) \cap H_{0}^{1}(\Omega)$ are the approximation spaces introduced in \ref{def:appendix-approx-spaces}.

\begin{theorem}
    \label{thm:appendix-quadrature}
    Let $f,b \in \cG^{\mfs_{x}}(\overline{\Omega})$, let $u_{\eps}$ be the solution of \eqref{eq:appendix-problem} with $\alpha^{\pm} = 0$. Let $u_{G}$ be defined by \eqref{eq:appendix-sol-with-quadrature}. Then there are constants $C, \sigma > 0$ depending only on the input data $f,b$ and $\mfs_{x}$ such that for all $p,q \in \mathbb{N}$
    \begin{equation*}
        \norm{u_{\eps} - u_{G}}_{1,\eps} \le C \left(\inf_{v \in S_{0}^{p}(\Delta)}\norm{u_{\eps} - v}_{1,\eps} + e^{-\sigma q^{1/\mfs_{x}}} \right).
    \end{equation*}
\end{theorem}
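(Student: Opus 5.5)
The plan is a Strang-type argument, as in the analytic case of \cite{melenk1997boundarylayer}. There are three steps: uniform stability of the perturbed form $B_\eps^{p+q}$ on $S_0^p(\Delta)$, a Strang lemma, and root-exponential bounds on the consistency errors of the quadrature.

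\emph{Stability.} Gaussian quadrature with $p+q\geq p+1$ points is exact for polynomials of degree $\leq 2p+1$. Hence $G^{p+q}_{I_i}(u'v')=\int_{I_i}u'v'$ exactly for $u,v\in S_0^p(\Delta)$. Since the weights are positive and $b\geq\underline b^2$,
\begin{equation*}
G^{p+q}_{I_i}(b v^2)\geq \underline b^2 G^{p+q}_{I_i}(v^2)=\underline b^2\norm{v}_{L^2(I_i)}^2 ,
\end{equation*}
because $v^2$ has degree $2p$. This gives $B_\eps^{p+q}(v,v)\geq\min\{1,\underline b^2\}\norm{v}_{1,\eps}^2$, uniformly in $\eps,p,q$.

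\emph{Strang lemma.} For arbitrary $w\in S_0^p(\Delta)$, set $e=u_G-w$. Then
\begin{equation*}
B_\eps^{p+q}(e,v)=B_\eps(u_\eps-w,v)+\bigl[B_\eps(w,v)-B_\eps^{p+q}(w,v)\bigr]+\bigl[F^{p+q}(v)-F(v)\bigr].
\end{equation*}
We choose $v=e$ and use coercivity and the triangle inequality. This bounds $\norm{u_\eps-u_G}_{1,\eps}$ by $C\norm{u_\eps-w}_{1,\eps}$ plus the dual norms of the two consistency functionals. Only the reaction term and the load contribute, so the consistency errors are
\begin{equation*}
\sum_i\bigl(\textstyle\int_{I_i}-G^{p+q}_{I_i}\bigr)(b\,wv)
\qquad\text{and}\qquad
\sum_i\bigl(\textstyle\int_{I_i}-G^{p+q}_{I_i}\bigr)(fv).
\end{equation*}

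\emph{Consistency.} On each element we replace $b$, resp.\ $f$, by a polynomial approximation $\pi_q b$ of degree $2q-1$, resp.\ $\pi_q f$. Then $\pi_q b\,wv$ and $\pi_q f\,v$ have degree $\leq 2(p+q)-1$ and are integrated exactly. The quadrature error therefore reduces to
\begin{equation*}
\int_{I_i}(b-\pi_q b)wv-G^{p+q}_{I_i}\bigl((b-\pi_q b)wv\bigr),
\end{equation*}
and analogously for $f$. Positivity of the Gauss weights and exactness on $w^2$ and $v^2$ give the bound
\begin{equation*}
2\norm{b-\pi_q b}_{L^\infty(I_i)}\norm{w}_{L^2(I_i)}\norm{v}_{L^2(I_i)}
\end{equation*}
via Cauchy--Schwarz for the discrete inner product.

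For $\pi_q$ we use \cref{lem:gevrey-poly-approx}, applied after affine mapping of $I_i$ to $I$. Mapping to a shorter element only shrinks $\gamma$, because derivatives scale by $(h_i/2)^n\leq1$. This yields $\norm{b-\pi_q b}_{L^\infty(I_i)}\leq Ce^{-\sigma q^{1/\mfs_x}}$, uniformly in $\eps$ and the mesh, and likewise for $f$. We then take $w$ to be the best approximation; its $L^2$ norm is bounded by $\norm{u_\eps}_{L^2}+\norm{u_\eps-w}_{1,\eps}\leq C$. The consistency terms are then $\leq Ce^{-\sigma q^{1/\mfs_x}}\norm{v}_{1,\eps}$, and the claimed estimate follows.

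\emph{Main obstacle.} The delicate point is keeping every constant independent of $\eps$ on the thin layer elements. The argument handles this because it only uses $L^\infty$ bounds on $b$ and $f$, which are $\eps$-free, and $L^2$ norms of discrete functions. It never uses derivatives of $w$ or $v$, which would carry factors $\eps^{-1}$. One must also check that the element-scaling of \cref{lem:gevrey-poly-approx} does not degrade $\sigma$; this is the remark that $h_i\leq2$ makes the scaled Gevrey constants no worse.
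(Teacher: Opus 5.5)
Your proposal is correct and takes essentially the route the paper intends. The paper gives no separate argument for this theorem; it defers to the analytic proof in \cite{melenk1997boundarylayer}, which uses the first Strang lemma, exactness of the $(p+q)$-point Gauss rule on the polynomial part, positivity of the Gauss weights, and elementwise polynomial approximation of $b$ and $f$ of degree $2q-1$, with the analytic approximation step replaced by \cref{lem:gevrey-poly-approx} after affine scaling to the reference element, exactly as you do (noting that $h_i\leq 2$ keeps $\gamma_b$ admissible on every, possibly $\eps$-thin, element).
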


\end{document}